\newcommand\xrsquigarrow[1]{%
	\mathrel{%
		\begin{tikzpicture}[%
			baseline={(current bounding box.south)}
			]
			\node[%
			,inner sep=.44ex
			,align=center
			] (tmp) {$\scriptstyle #1$};
			\path[%
			,draw,<-
			,decorate,decoration={%
				,zigzag
				,amplitude=0.7pt
				,segment length=1.2mm,pre length=3.5pt
			}
			] 
			(tmp.south east) -- (tmp.south west);
		\end{tikzpicture}
	}
}
\renewcommand{\eta}{\upeta}
\renewcommand{\tau}{\uptau}
\newcommand{\Mbar}{\bar{M}}
\newcommand{\Msig}{\ensuremath{M^{\upsigma}}}
\newcommand{\Mbarsig}{\bar{M}^{\upsigma}} 
\newcommand{\MbarO}{\bar{M}_{0}}
\newcommand{\MbarI}{\bar{M}^{1}}
\newcommand{\MbarIsig}{\bar{M}^{1,\upsigma}}
\newcommand{\FlG}{G_{\upkappa}/U_{-}^{\sig}}
\newcommand{\Ds}{\mathbb{D}^*}
\def\smallint{\begingroup\textstyle \int\endgroup}
\newcommand{\Xsig}{\ensuremath{X^{\upsigma}}}
\newcommand{\xflat}{x^{\flat}}
\newcommand{\xbarflat}{\bar{x}^{\flat}}
\newcommand{\yflat}{y^{\flat}}
\newcommand{\thet}{\uptheta}
\newcommand{\thetxbar}{\uptheta_{\bar{x}}}
\newcommand{\an}{\ensuremath{\mathrm{an}}}
\newcommand{\bet}{{\upbeta}}
\newcommand{\betx}{\upbeta_x}
\newcommand{\bety}{\upbeta_y}
\newcommand{\betxbar}{{\upbeta}_{\bar{x}}}
\newcommand{\betxsig}{{\upbeta_{x}^{\upsigma}}}
\newcommand{\sigbetx}{{\upsigma(\upbeta_x)}}
\newcommand{\eps}{\upvarepsilon}
\newcommand{\rf}{\ensuremath{\mathrm f}}
\newcommand{\rfxflat}{\ensuremath{\mathrm{f}_{x^{\flat}}}}
\def\smallint{\begingroup\textstyle \int\endgroup}
\newcommand{\Intxflat}{\ensuremath{\smallint_{x^{\flat}}}}
\newcommand{\Intxflatbar}{\ensuremath{\overline{\smallint_{x^{\flat}}}}}
\newcommand{\Intyflat}{\ensuremath{\smallint_{y^{\flat}}}}
\newcommand{\Intyflatbar}{\ensuremath{\overline{\smallint_{y^{\flat}}}}}
\newcommand{\II}{\ensuremath{\mathbb I}}
\newcommand{\IIplus}{\ensuremath{{\mathbb I}_+}}
\newcommand{\gam}{\upgamma}
\newcommand{\Gamxflat}{\Gamma_{x^{\flat}}}
\newcommand{\Gamxflatbar}{\overline {\Gamma_{x^{\flat}}}}
\newcommand{\gamxbarflat}{\upgamma_{\bar{x}^\flat}}
\newcommand{\lmd}{{\uplambda}}
\newcommand{\Lmd}{{\Lambda}}
\newcommand{\Lams}{\Lambda^*}
\newcommand{\LamsI}{\Lambda^{*,1}}
\newcommand{\LamsO}{\Lambda^{*,0}}
\newcommand{\GLlamstar}{{\GL(\Lambda_{\upkappa}^{*})}}
\newcommand{\zet}{\upzeta}
\newcommand{\zetatil}{\tilde{\upzeta}}
\newcommand{\sZzeta}{\mathcal{Z}_{\upzeta}}
\newcommand{\sZeta}{\mathcal{Z}_{\upeta}}
\newcommand{\etatil}{\tilde{\upeta}}
\newcommand{\kap}{{\upkappa}}
\newcommand{\Gka}{G_{\upkappa}}
\newcommand{\Gkasig}{G_{\upkappa}^{\upsigma}}
\newcommand{\gmka}{\mathbb{G}_{m,\upkappa}}
\newcommand{\Gmka}{\mathbb{G}_{m,\upkappa}}
\renewcommand{\mu}{\upmu}
\newcommand{\mutil}{{\tilde{\upmu}}}
\newcommand{\sigmu}{{\upmu^{\upsigma}}}
\newcommand{\sigmutil}{{\upsigma(\tilde{\upmu})}}
\newcommand{\mutilsig}{{\tilde{\upmu}^{\upsigma}}}
\newcommand{\upmuka}{{\upmu: \mathbb{G}_{m,\upkappa}\to G_{\upkappa}}}
\newcommand{\rI}{\ensuremath{\mathrm I}}
\newcommand{\sA}{\ensuremath{\mathcal A}}
\newcommand{\sD}{\ensuremath{\mathcal D}}
\newcommand{\sG}{\ensuremath{\mathcal G}}
\newcommand{\sH}{\ensuremath{\mathcal H}}
\newcommand{\sK}{\ensuremath{\mathcal K}}
\newcommand{\sM}{\ensuremath{\mathcal M}}
\newcommand{\sO}{\ensuremath{\mathcal O}}
\newcommand{\sP}{\ensuremath{\mathcal P}}
\newcommand{\sS}{\ensuremath{\mathcal S}}
\newcommand{\sU}{\ensuremath{\mathcal U}}
\newcommand{\sV}{\ensuremath{\mathcal V}}
\newcommand{\sZ}{\ensuremath{\mathcal Z}}
\newcommand{\sig}{{\upsigma}}
\newcommand{\udl}{\underline}
\newcommand{\iplus}{{\mathrm{I}_+}}
\newcommand{\Pplu}{P_+}
\newcommand{\pplu}{p_+}
\newcommand{\pmin}{p_{-}}
\newcommand{\Pminsig}{P_{-}^{\upsigma}}
\newcommand{\uplu}{u_+}
\newcommand{\umin}{u_-}
\newcommand{\Umin}{U_-}
\newcommand{\Uminsig}{{U_-^{\upsigma}}}
\newcommand{\barR}{{\bar{R}}}
\newcommand{\bari}{\bar{R}_i}
\newcommand{\Zp}{{\mathbb{Z}_p}}
\newcommand{\Fp}{\mathbb{F}_p}
\newcommand{\Fpbar}{\bar{\mathbb F}_p}
\newcommand{\pinf}{p^{\infty}}
\newcommand{\rg}{{\mathrm{g}}}
\newcommand{\gmoduminsig}{{G_{\upkappa}/\Uminsig}}
\newcommand{\underI}{{\underline{\rm I}}}
\newcommand{\Imin}{{{\rm I}_{-}}}
\newcommand{\Iplusig}{{\rm I}_{+}^{\upsigma}}
\newcommand{\GmodUminsig}{{G_{\upkappa}/U_{-}^{\upsigma}}}
\newcommand{\wk}{\ensuremath{W(k)}}
\newcommand{\wkap}{\ensuremath{W(\upkappa)}}
\newcommand{\hra}{{\hookrightarrow}}
\newcommand{\emu}{{E_{\upmu}}}
\newcommand{\too}{{\longrightarrow}}
\newcommand{\zipstack}{{[G_{\upkappa}/\emu]}}
\newcommand{\zipstackI}{\ensuremath{[(G_{\upkappa}/U_{-}^{\upsigma})/P_+]}}
\newcommand{\Gzips}{\ensuremath{{G\textsf{-Zip}^{\upmu}}}}
\newcommand{\Uplusig}{{U_+^{\sig}}}
\newcommand{\xbarleftflat}{{{}^{\flat}\bar{x}}}
\newcommand{\Rbar}{\bar{R}}
\newcommand{\Rbarperf}{\bar{R}_{\mathrm{perf}}}
\newcommand{\Abar}{\bar{A}}
\newcommand{\xbar}{\bar{x}}
\newcommand{\teta}{\uptheta}
\newcommand{\sAxbar}{\sA_{\bar{x}}}
\newcommand{\HIdR}{\ensuremath{\mathrm{H}^1_{\rm dR}}}
\newcommand{\C}{\ensuremath{\mathbb C}}
\newcommand{\Z}{\ensuremath{\mathbb Z}}
\newcommand{\Afp}{\ensuremath{\mathbb{A}_f}^p}
\newcommand{\cancong}{\ensuremath{{\ \displaystyle \mathop{\cong}^{\text{\tiny{can}}}}\ }}
\newcommand{\Ch}{{\mathrm{Ch}}}
\newcommand{\cris}{{\mathrm{cris}}}
\newcommand{\can}{\mathrm{can}}
\newcommand{\dR}{{\mathrm{dR}}}
\newcommand{\gr}{{\mathrm{gr}}}
\newcommand{\GL}{{\mathrm{GL}}}
\newcommand{\GSp}{{\mathrm{GSp}}}
\newcommand{\Hom}{{\mathrm{Hom}}}
\newcommand{\id}{{\mathrm{id}}}
\renewcommand{\Im}{{\mathrm{Im}}}
\newcommand{\Isom}{{\mathrm{Isom}}}
\newcommand{\Ker}{{\mathrm{Ker}}}
\newcommand{\q}{\ensuremath{\mathbb{Q}}}
\newcommand{\qp}{\ensuremath{{\mathbb Q}_p}}
\newcommand{\pr}{{\mathrm{pr}}}
\renewcommand{\mod}{\ \mathrm{mod}\ }
\newcommand{\reg}{{\mathrm{reg}}}
\newcommand{\Res}{{\mathrm{Res}}}
\newcommand{\Sh}{{\mathrm{Sh}}}
\newcommand{\Spec}{{\mathrm{Spec}}}
\newcommand{\Spf}{{\mathrm{Spf}}}
\newcommand{\Zhat}{\hat{\mathbb{Z}}}
\newcommand{\FF}{\mathrm{F}}
\newcommand{\FFbar}{\bar{\mathrm{F}}}
\newcommand{\VV}{\mathrm{V}}
\newcommand{\VVbar}{\bar{\mathrm{V}}}
\newcommand{\HH}{{\mathrm{H}}}
\newcommand{\sbt}{\subseteq}
\newcommand{\spt}{\supseteq}
\DeclareMathAlphabet{\pazocal}{OMS}{zplm}{m}{n}
\newcommand{\perf}{\ensuremath{\mathrm{perf}}}
\numberwithin{equation}{subsection}
\newcommand{\K}{\ensuremath{\mathsf{K}}}
\newcommand{\Ktil}{\ensuremath{\tilde{\mathsf{K}}}}
\newcommand{\sk}{\ensuremath{\mathcal{S}_{\mathsf{K}}}}
\newcommand{\shk}{\ensuremath{\mathrm{Sh}_{\mathsf{K}}}}
\newcommand{\shkc}{\mathrm{Sh}_{\mathsf{K}, \mathbb{C}}}
\newcommand{\shkan}{\mathrm{Sh}_{\mathsf{K}}^{\mathrm{an}}}
\newcommand{\shkcan}{\mathrm{Sh}_{\mathsf{K}, \mathbb{C}}^{\mathrm{an}}}
\newcommand{\RI}{\mathbf{R}^1}
\theoremstyle{remark}
\newtheorem{thm}{\rm{\textbf{Theorem}}}[subsection]
\newtheorem{lem}[thm]{\rm{\textbf{Lemma}}}
\newtheorem{prop}[thm]{\rm{\textbf{\textbf{Proposition}}}}
\newtheorem{defn}[thm]{\rm{\textbf{Definition}}}
\newtheorem{ex}[thm]{\rm{\textbf{Example}}}
\newtheorem{rmk}[thm]{\rm{\textbf{Remark}}}
\author[Q.~Yan]{Qijun Yan}
\title[An alternative construction of zip period maps]{An alternative construction of zip period maps for Shimura varieties}
\address{Morningside Center of Mathematics, Chinese Academy of Sciences, Beijing, 100\,190 China}
\email{yanqmath\symbol{64}amss.ac.cn}
\date{}
\begin{document}
	
	\begin{abstract}
		Let $ S $ be the special fibre of a Shimura variety of Hodge type, with good reduction at a place above $p$. We give an alternative construction of the zip period map for $S$, that is used to define the Ekedahl-Oort strata of $ S $. The method employed is local, $p$-adic,  and group-theoretic in nature.	
	\end{abstract}
	
	\maketitle

	\section{Introduction}
	\subsection{History of zip period map}

	The \textbf{zip period map} in the title arises in the development of Ekedahl-Oort (EO, for simplicity) stratification theory for Shimura varieties. Initially, the EO stratification was defined by Ekedahl and Oort \cite{OortAstratificationofModuliSpaces} for the moduli space of principally polarized abelian varieties $\sA_g\otimes\Fp$ of dimension $g$ in characteristic $p>0$ (can be viewed as the Siegel type Shimura variety), by declaring that two points $(A, \lmd)$ and $(A', \lmd')$ over $\bar{\mathbb{F}}_p$ lie in the same stratum if their $p$-kernels are isomorphic.
	
	Later on, this stratification was extended  to PEL type Shimura varieties in  \cite{GorenOort} \cite{MoonenGroupschemeswithadditionalstructure},\cite{MoonenDimensionformula}, \cite{Moonen&WedhornDiscreteinvariants}, \cite{ViehmannWedhornEOPELtype}, and to Hodge type Shimura varieties in \cite{VasiuModpFcrystal}, \cite{ChaoZhangEOStratification}.  The underlying idea is the same as in the Siegel case, that is, considering isomorphism classes of  $p$-kernels of abelian varieties with additional structures. The way of defining and studying these strata evolves over time. Let $S$ be the special fibre of a PEL type Shimura variety of good reduction at $p$; it is defined over a finite field, say $\kap$. In order to give the dimension formula for the EO strata of $S$, Wedhorn  \cite{WedhornDimensionofOortStrata} constructed a sequence of morphisms of stacks over $\kap$ (later viewed as a period map in characteristic $p$)
	\begin{equation}\label{Wedhmap}
	 S \too  \mathsf{BT}_{1}\to \mathsf{DS}_1,
	\end{equation}
	where $\mathsf{BT}_1$ is the stack of BT 1's (i.e., the $p$-kernel of $p$-divisible groups) with PEL structure and $\mathsf{DS}_1$ is the stack of Dieudonn\'e spaces with PEL structures (i.e., Dieudonn\'e modules associated with BT 1's with extra structure). He shows that the map $S\to  \mathsf{BT}_{1}$ is smooth and the natural map $\mathsf{BT}_{1}\to \mathsf{DS}_1$  given by the crystalline Dieudonn\'e functor is a homeomorphism. 
	
   
   Soon, Moonen and Wedhorn  \cite{Moonen&WedhornDiscreteinvariants} established the theory of $F$-zips with the underlying idea that an $F$-zip structure on a vector bundle in characteristic $p$ is like a Hodge structure in characteristic $0$. Moreover, they constructed a morphism of $\kap$-stacks 
  \begin{equation}\label{MoonWedmap}
  S\too [G\backslash X_{\mu}],
  \end{equation} 
where $X_{\mu}$ is the moduli of trivialized $F$-zips with PEL structures (of certain type $\mu$ determined by $S$). Here the map is given by taking the $F$-zip associated with the universal BT 1 over $S$, which by definition is the de Rham cohomology $\HH_{\dR}^1(\sA/S)$ of the universal abelian scheme $\sA$, equipped with its $F$-zip structure. 
In fact, an $F$-zip associated with a BT 1 is equivalent to the corresponding Dieudonn\'e space defined in \cite{WedhornDimensionofOortStrata} and hence the map  \eqref{MoonWedmap} is essentially the same as the map  \eqref{Wedhmap}. Thanks to the analogy of $F$-zip structures to Hodge structures, the map in \eqref{MoonWedmap} is considered as a period map in characteristic $p$. 

Based on the theory of $F$-zips, Pink, Wedhorn and Ziegler \cite{PinkWedhornZiegler2} defined the notion of $G$-zips (as $F$-zips with $G$-structures) and the stack of $G$-zips of type $\mu$, denoted by $\Gzips$; we refer to \S \ref{S: G-zips} for its precise definition.
In the meanwhile, they show that the stack $\Gzips$ can be realized as the quotient stack of $G$ by some zip group $\emu$ (a notion defined in \cite{PinkWedhornZiegler1}), i.e., we have an isomorphism of $\kap$-stacks (again see \S \ref{S: G-zips})
 \begin{equation}\label{QuotZipIsom}
  \Gzips\cong \zipstack. 
\end{equation}
Suppose now that  $S$ is of Hodge type and $p\geq3$. 
 In order to extend EO stratification to Shimura varieties of Hodge type,  Zhang~\cite{ChaoZhangEOStratification} (see also \cite{Wortmann}) constructed a map of algebraic stacks (see \S \ref{SectionDefinitonofEO} for a review of construction)
 \begin{align}\label{Chao'szeta}
 	\upzeta: S\longrightarrow \Gzips,
 \end{align}
 and showed that $\zet$ is smooth.  The EO strata of $S$ are defined as geometric fibres of $\zet$. The strata thus defined are automatically smooth and many properties on these strata are obtained by translating the information of the target stack into that of $S$ via $\zet$; see loc. cit. for details.
 
  We call map $\zet$ the \textbf{zip period map} for $S$. There are also some other variants of this map: for example, the perfectly smooth map  $\Sh_{\mu}\to \mathrm{Sht}_{\mu}^{\mathrm{loc}(2,1)}$ in \cite[Rem. 7.2.5]{XiaoZhuCycle} (see also \cite{SYZEKOR} for its generalization) and the map $\eta: S\to \sD_1/\sK^{\diamond}$ in \cite[Thm. 8.5.2]{Yan18}. 
 	The aim in this paper is to give an alternative  construction of  $\zet$ (more precisely, the composition of $\zet$ with the isomorphism in \eqref{QuotZipIsom}) avoiding the language of $G$-zips, which (we hope) provides a different perspective of understanding the already existing zip period map.

	\subsection{Main results and the strategy of proof}
	\noindent 
	 Let $(\textbf{G}, \textbf{X})$ be a Shimura datum of Hodge type and denote by  $\sk$ the Kisin-Vasiu integral model of the associated Shimura variety $\Sh_{\K}(\textbf{G}, \textbf{X})$ of level $\K$ which is hyperspecial at $p$. This hyperspecial condition on $\K$ implies that $\textbf{G}_{\qp}$ admits a reductive $\Zp$-model $\sG$, whose special fibre we denote by $G$. Recall that the integral model $\sk$ is a quasi-projective and smooth scheme over  $ \mathcal{O} $, the localization at some place above $p$ of the ring of integers of the reflex field  of $(\textbf{G}, \textbf{X})$. Thanks to the hyperspecial assumption $\sO$ is unramified at $p$. Write  $\kap$ for the residue field of $\sO$ and  $S:=\sk\otimes_{\sO} \kap $. Let $\upmu: \gmka\to \Gka$ be a representative for the reduction over $ \kap $ of  the  $ \textbf{G}(\mathbb{C}) $-conjugacy class $[\upmu]_{\mathbb{C}} $ of the inverses of Hodge cocharacters $ \mathbb{G}_{m, \C}\to \textbf{G}_{\mathbb{C}} $ determined by $ (\textbf{G}, \textbf{X}) $.	
	 
	Denote by $ P_{\pm}\sbt \Gka$ the opposite parabolic subgroups of $\Gka$ defined by $ \mu $ and  $U_{\pm}\sbt P_{\pm}$ the corresponding unipotent radicals, and $\Uminsig$ the base change of $\Umin$ along the $p$-power Frobenius $\sig: \kap\to \kap$; the same convention applies to other notations of the form $(\cdot)^{\sig}$.
	The stack $\Gzips$  is in fact isomorphic to some quotient stack $\zipstack$ of $ \Gka $ by the smooth algebraic group $ E_{\upmu}=\Pplu\ltimes \Uminsig $ (see \S \ref{S: G-zips} for the action),  but such an isomorphism is not quite formal. The following nearly trivial observation turns out to be important to this work: since $ \Uminsig $, as a normal subgroup of $ \emu$, acts freely on $ \Gka $ by right multiplication, by passing to quotient we obtain a canonical isomorphism of algebraic stacks over $\kap$ (\S \ref{S: ZipMapEta}): \[\zipstackI \cong \zipstack,\] where $\FlG$ is represented by a smooth $ \kap $-scheme. Hence to give the zip period map $\zet$ above is equivalent to a give a $P_+$-torsor, say $T$,  over $S$ and a $P_+$-equivariant map of $\kap$-schemes $T\to \GmodUminsig$.  The natural candidate for $T$ is the  scheme $\rI_+$ of trivializations of the Hodge filtration $\HIdR(\sA/S)\spt \omega_{\sA/S}$, respecting certain tensors that we do not specify in this introduction. The torsor $\rI_+$ is part of the datum for the universal $G$-zip which  defines~$\zet$. 
	
		\begin{thm}[Thm. \ref{MainThm1}, Thm. \ref{MainThm2}]\label{Constrgam}\begin{enumerate}[(1).]
			\item There exists an (explicitly constructed) morphism of $\kap$-schemes 
			\[\gam: \rI_+\to \FlG.\] 
			\item The map  $ \gam $ is $ P_{+} $-equivariant, and hence induces a morphism of algebraic $\kap$-stacks 
			\begin{equation*}\label{map zeta prime}
				\upeta: S\longrightarrow \zipstackI \cong \zipstack \cancong \Gzips.
			\end{equation*}
		\end{enumerate}
	\end{thm}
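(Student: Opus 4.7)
The plan is to construct $\gam$ by a local, crystalline Dieudonn\'e-theoretic recipe: a trivialization $i \in \rI_+$ identifies $(\HIdR(\sA), \omega_{\sA}, \text{Hodge-cycle tensors})$ with a standard module, and reading the crystalline Frobenius through this identification, after factoring out the weight jump prescribed by $\upmu$, produces a canonical element of $\Gka$ modulo $\Uminsig$.

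Concretely, fix a geometric point $\xbar \in S(\bar\upkappa)$ and use the smoothness of $\sk$ over $\sO$ to pick a lift $\tilde x \in \sk(W(\bar\upkappa))$. The $P_+$-torsor $\rI_+$ of tensor-respecting, filtration-preserving trivializations of $\HIdR(\sA)|_S$ extends to the analogous torsor on $\sk$ itself (applying the same definition to the integral $\HIdR(\sA/\sk)$), so any $i \in \rI_+(\xbar)$ lifts to some $\tilde i$ over $\tilde x$. Via $\tilde i$ one identifies $\HIdR(\sA_{\tilde x}/W(\bar\upkappa))$ with $\Lambda^*_{W(\bar\upkappa)}$, the Hodge filtration with $\LamsI_{W(\bar\upkappa)}$, and the Hodge-cycle tensors with the standard ones. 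The crystalline Frobenius $F$ on this module, being $\upsigma$-semilinear, tensor-preserving, and sending $\LamsI$ into $p\Lambda^*$, admits a unique expression $F = g \cdot \upmu(p)$ with $g \in \sG(W(\bar\upkappa))$; we then set $\gam(i) := \bar g \cdot \Uminsig(\bar\upkappa) \in (\FlG)(\bar\upkappa)$, where $\bar g$ is the reduction modulo $p$. Running this recipe over small enough affine opens $\Spec R \subset S$ admitting formal lifts to $\sk$ (which exist by smoothness) produces $\gam$ as a morphism of $\upkappa$-schemes.

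Two verifications remain: independence of the auxiliary choices $(\tilde x, \tilde i)$, and $P_+$-equivariance. For independence, changing $\tilde i$ for a fixed $\tilde x$ by any $p_+ \in P_+(W(\bar\upkappa))$ with $p_+ \equiv 1 \pmod p$ leaves $\bar g$ unchanged; changing the lift $\tilde x$ shifts $g$ by a term governed by the Grothendieck--Messing identification of lifts of $\omega$ with $\Hom(\omega, \HIdR(\sA)/\omega) \cong \Lie(\Umin)$, and after reduction modulo $p$ this shift lands in $\Uminsig(\bar\upkappa)$ because of the Frobenius-twist appearing in the display formalism. For $P_+$-equivariance, substituting $p_+ \cdot i$ for $i$ and lifting $p_+$ to some $\tilde p_+ \in P_+(W(\bar\upkappa))$ replaces $g$ by $\tilde p_+\, g\, \upmu(p)\, \upsigma(\tilde p_+)^{-1}\upmu(p)^{-1}$; using the Levi decomposition $\tilde p_+ = l u_+$ and the identity $\upmu(p) l \upmu(p)^{-1} = l$, one checks directly that modulo $\Uminsig$ the element $\bar g$ transforms by exactly the $P_+$-action on $\FlG$ arising from the projection $\emu \twoheadrightarrow P_+$ that underlies $\zipstackI \cong \zipstack$.

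The main obstacle is the lift-ambiguity analysis: identifying the deformation ambiguity precisely with $\Uminsig$ (rather than a larger subgroup), compatibly with the tensors that cut out $\sG \subset \GL(\Lambda^*)$, will require combining standard results on the Frobenius-equivariant Dieudonn\'e crystal of $\sA[p^\infty]$---which carries the Hodge-cycle tensors integrally over $\sO$ by the work of Kisin---with Grothendieck--Messing deformation theory applied to $G$-structured filtrations. Once $\gam$ is constructed and shown to be $P_+$-equivariant, the induced morphism $\upeta: S \to \zipstack \cong \Gzips$ follows formally from the isomorphism $\zipstackI \cong \zipstack$ recorded in the introduction.
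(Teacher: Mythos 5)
You have the right overall idea — trivialize the Frobenius, factor out $\mutilsig(p)$ to get an integral element of $\sG$, reduce mod $p$, and project to $\GmodUminsig$. But there are several genuine gaps that separate this sketch from a proof.

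First, and most seriously, you construct $\gam$ only at geometric points and then gesture at ``running this recipe over small enough affine opens $\Spec R\subset S$.'' To get a morphism of $\kap$-schemes you must carry out the construction over rings, and then glue. The paper's approach is to do the construction over any smooth $\kap$-algebra $\Rbar$ equipped with a simple frame $(R,\sig)$ (i.e. a crystalline prism), using de Jong's relative Dieudonn\'e theory rather than pointwise crystalline Dieudonn\'e theory, and then prove (a) independence of the simple frame $(R,\sig)$ — you never address the dependence on the chosen Frobenius lift at all, which is the content of the paper's Lemma~\ref{Glue2} — and (b) compatibility with base change $\Rbar\to\Rbar'$ so that the local maps glue (Lemma~\ref{Lem:coin}). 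That last step is subtle because a morphism of simple frames lifting $\Rbar\to\Rbar'$ need not exist; the paper gets around this by faithfully flat descent to the perfection $\Rbar_{\perf}$, where the simple frame is unique. Your sketch contains no substitute for this gluing argument.

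Second, there is a local error: you claim that replacing $\tilde i$ by $\tilde i\cdot p_+$ with $p_+\equiv 1\pmod p$ ``leaves $\bar g$ unchanged.'' It does not — the matrix computation in the paper's Lemma~\ref{Glue1} shows that under such a change $\bar g$ is multiplied on the right by a nontrivial element of $\Uminsig(\bar R)$, coming from the lower-left block of $\overline{\mutil(p)h\mutil(p)^{-1}}$. Only the class in $\GmodUminsig$ is preserved, which is exactly the point. Third, your independence-of-$\tilde x$ argument appeals to Grothendieck--Messing theory for $G$-structured filtrations and the display formalism, which is both heavier than needed and not actually carried out; the paper's much lighter route is to observe that two lifts $x,y$ with $\bar x=\bar y$ give tensor-preserving, parallel-isomorphic de Rham cohomologies, so one may assume $x=y$ and only the trivialization changes, at which point a block-matrix conjugation settles everything. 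Finally, note that the correct factorization involves the Frobenius twist $\mutilsig(p)$, not $\upmu(p)$: the $\sig$-semilinearity of $\FF$ is absorbed precisely by twisting the cocharacter. Without this twist the integrality statement is stated for the wrong group element. In summary, the skeleton is right, but the scheme-theoretic (as opposed to pointwise) construction, the independence of the prism, the gluing, and a couple of sign/twist issues are all missing.
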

		\begin{thm}[Thm. \ref{CompThm}]\label{LocalGlobalCompat}
		We have a natural 2-isomorphism $\upeta\cong \upzeta$. Consequently, we give an alternative construction of the zip period map for $ S $. 
	\end{thm}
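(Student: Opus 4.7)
The plan is to compare $\upeta$ and $\upzeta$ after composing both with the isomorphisms $\Gzips \cong \zipstack \cong \zipstackI$ of the paper, so that the comparison takes place in $\zipstackI$. Recall that $\upzeta$ arises from the universal $G$-zip $\underline{\mathrm{I}} = (\rI_+, \rI_-, \iota)$ on $S$, where $\rI_+$ is the $P_+$-torsor of Hodge-filtration trivializations of $\HH_{\dR}^1(\sA/S)$ and $(\rI_-, \iota)$ records the conjugate filtration together with the gluing of the graded pieces. Unwinding the equivalence of \cite{PinkWedhornZiegler2}, one checks that $\upzeta$, viewed as a morphism to $\zipstackI$, corresponds to a canonical $P_+$-equivariant morphism $\widetilde{\upgamma}: \rI_+ \to \Gka/U_-^{\upsigma}$ built from $(\rI_-, \iota)$. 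The desired 2-isomorphism $\upeta \cong \upzeta$ thus reduces to the canonical identification $\widetilde{\upgamma} = \upgamma$ of $P_+$-equivariant maps $\rI_+ \to \Gka/U_-^{\upsigma}$.

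To verify this identity I would proceed pointwise at a geometric point $\xbar \in S(\Fpbar)$ and a trivialization $x \in \rI_+(\xbar)$. On the $\upzeta$-side, $\widetilde{\upgamma}(x)$ is the coset in $\Gka/U_-^{\upsigma}$ encoding the relative position of the Hodge and conjugate filtrations at $\xbar$, refined through $\iota$ modulo the $U_-^{\upsigma}$-ambiguity. On the $\upeta$-side, by the explicit recipe in the proof of Theorem \ref{Constrgam}, $\upgamma(x)$ is read off from a Witt-vector lift of $\xbar$ together with the group-theoretic manipulation involving the cocharacter $\upmu$ described there. The core of the proof is therefore to show that these two prescriptions produce the same coset; this can be checked either directly at $\xbar$ or after passing to the completed local ring of $\sk$ at $\xbar$.

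The principal obstacle is the translation between the two kinds of data, which a priori live in different worlds: the $\upzeta$-side is phrased via the de Rham cohomology of $\sA/S$ and the Cartier isomorphism, while the $\upeta$-side is phrased via crystalline lifts to $W(\Fpbar)$. The bridge is the crystalline Dieudonn\'e functor, which identifies the $F$-zip on $\HH_{\dR}^1(\sA_{\xbar})$ with the reduction mod $p$ of the Dieudonn\'e module of $\sA_{\xbar}[p^{\infty}]$; under this dictionary the conjugate filtration corresponds to the Verschiebung filtration and $\iota$ to the semilinear Frobenius. Once the dictionary is set up, the equality $\widetilde{\upgamma}(x) = \upgamma(x)$ reduces to a direct computation in coordinates adapted to $\upmu$. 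Finally, because $\rI_+$ is smooth over $S$ and $\Gka/U_-^{\upsigma}$ is a separated $\kap$-scheme, the pointwise agreement together with functoriality of every construction in the universal family upgrades to the asserted 2-isomorphism of $\kap$-stacks. I expect the substance of the proof to lie in this explicit pointwise coincidence; the globalization should be essentially formal.
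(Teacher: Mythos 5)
Your proposed strategy --- compare $\upeta$ and $\upzeta$ inside $\zipstackI$ by identifying both with $P_+$-equivariant maps $\rI_+\to G_{\kap}/\Uminsig$, translate the $F$-zip datum $(\overline{\omega}_{\sA/S},\iota)$ into $\Ker(\bar{\VV})$ and the (partially divided) Frobenius via the crystalline Dieudonn\'e dictionary, and then reduce to a local computation over a simple frame, globalizing by reducedness of $\rI_+$ and separatedness of $G_{\kap}/\Uminsig$ --- is exactly the route taken in the paper's proof of Thm.~\ref{CompThm}. The substantive step you leave as ``a direct computation in coordinates adapted to $\upmu$'' is precisely the content of Lem.~\ref{Lem:matching}: verifying that $\thetxbar':=\overline{\Gamma_{x^\flat}}\,\sig(\betxbar)$ lies in $\rI_-$ and that $\gr\big(\overline{\Gamma_{x^\flat}}\big)$ recovers the zip isomorphism $\updelta$ of \eqref{ZipIso2}, which the paper establishes by reducing (via injectivity of the forgetful maps) to the case $G_{\kap}=\GLlamstar$ and checking the diagram \eqref{LastDiagrm}.
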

	We describe now the construction of $\gam$ on geometric points. Take $k=\bar{\mathbb{F}}_p$.
	From now on we fix a cocharacter $ \tilde{\upmu}: \mathbb{G}_{m, W(\kap)}\to \sG_{\wkap} $ of $\sG_{\wkap}$ which lifts $\upmuka$. A point 
	$ \xbarflat=(\xbar, \betxbar) \in \rI_+(k)$ 
	consists of a point $ \xbar\in S(k)$ and a trivialization 
	\[ \betxbar : [\Lmd^{*}_{k}\spt \Lmd^{*,1}_{k}]\cong [\HH^{1}_{\dR}(\sA_x/k) \supseteq \omega_{\sA_{\xbar}/k}]\cong [\Mbar\\ \supset \Mbar_1], \] 
	respecting tensors on both sides, where  $ \Mbar$ and $  \Mbar_1$  denote the reduction modulo $p$ of  the contravariant Dieudonn\'e module $M$ of the $ p $-divisible group $ \sA_{\xbar}[\pinf] $ over $ k $ and its Hodge filtration  respectively $M_1$ (\S \ref{BT/R}). Here $\Lmd^{*,1}_{k}$ is the weight $ 1 $ subspace of $ \Lmd^{*}_{k} $ induced by  $ \upmu_k: \mathbb{G}_{m, k}\to G_{k}$.  Let $\II_+$ be the integral model over $\sS$ of $\rI_+$. The first step of the construction of $\gam$ on $k$-points is to choose a lift $ \xflat =(x, \betx)\in \II_+ (W(k))$ of $\xbarflat$ which provides a lift of~$\betxbar$,
	\[ \betx: [\Lmd^{*}_{W(k)}\spt \Lmd^{*,1}_{W(k)}]\cong [\HH^{1}_{\dR}(\sA_x/W(k)) \supseteq \omega_{\sA_x/W(k)}], \] and hence a trivialization of $M$ via the canonical isomorphism $\HH^{1}_{\dR}(\sA_x/W(k))\cong M$, and then show that via the trivialization $\betx$ the Frobenius of $M$ admits a uniform decomposition
	\[ \Intxflat   \mutil_{W(k)}^{\sig}(p) \mathrm\ \ \ \text{with}\ \ \Intxflat \in \sG(W(k))\sbt \GL(\Lmd^{*}_{W(k)}), \]
	where $ \mutil_{W(k)}^{\sig}: \mathbb{G}_{m, W(k)}\to \sG_{W(k)}^{\sig} \cong \sG_{W(k)}$ is the base change along $ \sig: W(k) \to W(k) $ of $ \mutil_{W(k)} $.
	The key point here is that the element $\Intxflat$ is integral and hence we can take its reduction modulo $p$, denoted by $ \overline{ \Intxflat}  \in G(k) $.  
	Then one proceeds by showing that the image of $ \overline{ \Intxflat} $ in $  \GmodUminsig(k) $ is independent of lifts $ \xflat $ (Lem. \ref{Glue1}); we denote it by $\gamxbarflat$. To summarize, the map $\gam$ on $k$-points is given by 
 performing the following operations (\S \ref{S:ConstructionofEta})
	\begin{equation}\label{operations}
		\xbarflat\in \iplus(k)  \xrsquigarrow{\text{choose } x^{\flat}}\Intxflat\in \sG(W(k)) \xrsquigarrow{\mod p}\Intxflatbar\in G(k) \xrsquigarrow{\text{projection}} \gamxbarflat\in G/\Uminsig(k).
	\end{equation}

	\noindent


	 The technical heart of the construction of $\gam$  in Thm. \ref{Constrgam} is to justify the operations in \eqref{operations} and to show that these operations can be performed in a relative sense: for every smooth $\upkappa$-algebra $\Rbar$ which (automatically) admits a simple frame (equivalently, a crystalline prism if one prefers) and every point $\xbarflat\in \rI_+(\Rbar)$, we can construct a point $\gamxbarflat\in \gmoduminsig(\Rbar)$ whose specialization at geometric points coincides with \eqref{operations}; see Prop. \ref{Locconstr}. This relative construction relies on relative classifications of $p$-divisible groups as in \cite{deJong95} and is more complicated in the sense that in the relative setting we need to compare not only different lifts $ \xflat $ as aforementioned, but also different choices of simple frames for $ \bar{R} $. The independence of these two different types of choices are proved via matrices calculations; see \S \ref{LocConstr}. Finally the global map $\gam: \rI_+\to \FlG$ is obtained by first constructing it on Zariski opens of $\rI_+$ and then gluing the local maps together. 
	  

	We now give some comments on the comparison of $\zet$ and $\upeta$. Zhang's construction of $ \zeta $ uses the global geometry over characteristic $p$, namely the language of $G$-zips (which is somewhat complicated: for example, a $G$-torsor involves three torsors plus some delicate zip relations) but follows in spirit the original intuitive definition of EO stratification for $ \sA_g $ since the stack $ \Gzips $ can be viewed as the moduli space of BT 1's while the universal $ G $-zip for $ \zeta $ corresponds to $\sA[p]$, the universal BT 1 over $ S $. In particular $ \zeta $ is determined by  $\sA[p] $. In contrast, the construction of $\eta$ is local and group-theoretic; it avoids the fancy language of $G$-zips and uses only one torsor, $\rI_+$. But since it does not start with $\sA[p]$, in the end the dependence of  $ \gam $ (hence of $\eta$) on $ \sA[p] $ is obscured. From our local construction one sees better  the role that the zip group $\emu=P_+\ltimes \Uminsig$ plays in the business of zip period map.  For example,  given a $k$-point $\xbarflat$ of the $P_+$-torsor $\rI_+$, different lifts $\xflat$ produce the same $\Uminsig$-coset in $G(k)$; this coincides with Faltings deformation theory which says that the deformation of the $p$-divisible group $\sAxbar[\pinf]$ is controlled by the integral model of $\Umin$. The proof of Thm. \ref{LocalGlobalCompat} is not formal, partly because the canonical isomorphism $\zipstack \cancong \Gzips$ is not formal. 
	
	This work has a certain amount of overlap (not on main results) with my PhD thesis \cite{Yan18}. The connection between these two works will be made in a subsequent paper. 
	
	\subsection{Notational convention}\label{NotaConven}
	Throughout the paper we fix a prime number $p\geq 3$. The  Dieudonn\'e crystals (resp. modules) used in this paper are contravariant. 	Let $ R $ be a ring and $ M $ an $ R $-module.  If $ \sig: R \to R$ is a ring endomorphism  we write $M^{\sig}=\sig^{*}M$ for the base change $M\otimes_{R, \sig}R$. 	If $ M  $ is finite locally free, we denote by $ M^{*} $ its  dual $ R $-module. Then we have the canonical identification $M^{\otimes}\cancong M^{*, \otimes}$ of $R$-modules, where $M^{\otimes}$ is the direct sum of all $R$-modules obtained from $M$ by applying the operations of taking duals, tensor products, symmetric powers and exterior powers. Here, as a general convention, the notation ``$ \cancong $" means canonical isomorphism between mathematical objects. For any $ R $-automorphism $ f: M\cong M$,  we have an induced isomorphism $ (f^{-1})^{*}: M^{*}\to M^{*}, a\mapsto f^{-1}\circ a $, and hence a canonical isomorphism of $ R $-group schemes $(\cdot)^{\vee}: \GL(M)\cancong \GL(M^*), \  g\longmapsto g^{\vee}:=(g^{-1})^{*}$. We also use the letter $M$ to denote an Levi subgroup (of some algebraic group) but it shall be always clear from the context whether $M$ is a module or an algebraic group.  The decoration $\bar{( \ )}$ usually indicates that the object in question is over the characteristic $p$ world or is the reduction modulo $p$ of $(\ )$; it shall be clear if it has some other meaning.
	
	For an $ \Fp $-algebra $ \Rbar $, we use $ \sig: \Rbar \to \Rbar $ for the absolute (i.e., $p$-power) Frobenius of $ \Rbar $. If $ X $ is a scheme over $\Rbar$, we write $ X^{\sig} $ for its pull back along $\sig$ and $\sig: X\to X^{\sig}$ the relative Frobenius over $\Rbar$. In particular, when $X$ is defined over $\Fp$, sometimes we also write $\sig: X\to X$ for the composition of the relative Frobenius of $X$ with the canonical isomorphism $\sig: \Xsig\cancong X$.  Similarly, if $f: X\to Y$ is a map between objects over $\Rbar$, we write $f^\sig$ for its base change along $\sig: \Rbar\to \Rbar$.  
	Now let $ k $ be a perfect field and $ \mathcal{G} $ a group scheme over $ W(k) $, which is defined over $ \mathbb{Z}_p $. For a  $ W(k) $-algebra  $ R $ with a Frobenius lift $\sig= \sig_R: R\to R $  over $ W(k) $, we often denote by 
	\begin{equation}\label{MixFrob}
		\sig: \mathcal{G}(R)\to \mathcal{G}(R)
	\end{equation} 
	the homomorphism induced by $ \sig: R\to R $ (note that $ \sig: R\to R $ is only a $ \mathbb{Z}_p $-endomorphism, but not a $ W(k) $-endomorphism in general). We abuse language and call also \eqref{MixFrob}  ``Frobenius" of $\sG$. In case $\sG$ is defined over $\Fp$, this Frobenius coincides with relative Frobenius mentioned above.

	In this paper, for quotient stacks we systematically use right actions instead of left or mixed actions; for example, the stack $\zipstack$ in this paper corresponds to $[\emu\backslash \Gka]$ in the literature.	

\subsection{Acknowledgements}
The main idea of this work has its origin in my PhD thesis \cite{Yan18} during the course of which I received  help from many people, including Fabrizio  Andreatta,  Bas Edixhoven, Ulrich Goertz, Bart de Smit and Torsten Wedhorn; their help continues to  contribute in the context of this work. Hence, it is my pleasure to express my gratitude to all of them once again. I  thank Chao Zhang for his encouragement  and for answering many of my questions over the years.  I  thank Liang Xiao for suggestions on the writing of this paper and answering many questions on topics of Shimura varieties. I thank Lei Fu for patiently explaining the details of local systems and other things which has indirectly contributed to this work. While working on this paper, I moved from Yau Mathematical Sciences Center, Tsinghua University  to the Morningside Center of Mathematics, Chinese Academy of Sciences. I thank both of these institutions as well as my mentors Zongbin Chen and Xu Shen for their support.

   \section{Classification of \texorpdfstring{$p$}{Lg}-divisible groups (recollection)}\label{S:pdivisible}
	Throughout this section we let $ k $ be a perfect field of characteristic $ p$ and denote by $ \sig: W(k) \to W(k)$ its  unique ring automorphism inducing the absolute Frobenius of $ k $. 
	\subsection{Existence of simple frames} \label{SettingofRings}    
	\begin{lem} \label{Froblifts}
		Let $\barR$ be a $k$ algebra which Zariski locally admits a finite $p$-basis (\cite [Def. 1.1.1]{deJong95}, or  \cite[Def. 1.1.1]{BWDieudonneCrystallineIII}). The following holds:
		\begin{enumerate}[(1)]
			
			\item There exists a $p$-complete flat $W(k)$-algebra $R$ lifting $\barR$ (i.e., $R/pR\cong \barR$), which is formally smooth over $W(k)$ with respect to the $p$-adic topology. Such an $R$ is unique up to (nonunique) isomorphisms and we call it a \textbf{lift of} $ \barR $. 
			\item There is a ring endomorphism $\sig=\sig_R: R\to R $ lifting the absolute Frobenius of $\barR$, which is compatible with $ \sig: W(k)\to W(k) $. We  call it a \textbf{Frobenius lift }of $ R $ over $ W(k) $.
			\item Let $ \barR, R $ be as above and $ \bar{A}$ an \'etale $ \bar{R} $ algebra. Then there exists a formally \'etale $ R $-algebra $ A $ (for the $ p $-adic topology), unique up to unique isomorphism, such that $ A $ lifts $ \bar{A} $ and the structure ring homomorphism $ R \to A$ lifts the structure homomorphism $\bar{R}\to \bar{A}$. Moreover, every Frobenius lift $\sig_R: R\to R$ of $ R $ over $ W(k) $ extends uniquely to a Frobenius lift $\sig_A: A\to A$ of $A$ over $ W(k) $. 
			\item Let $(R, \sig)$ be as above. If $\mathfrak{m}$ is a maximal idea of $R$, then $\sig$ extends uniquely to a Frobenius lift of the $\mathfrak{m}$-adic completion $ \widehat{R}_{\mathfrak{m}} $ of $R$, which is a lift of the $\mathfrak{m}$-adic completion of $\bar{R}$. 
		\end{enumerate}
		
	\end{lem}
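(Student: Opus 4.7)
The plan is to follow the standard deformation-theoretic approach to lifting formally smooth/étale algebras from characteristic $p$ to $W(k)$, essentially as in [deJong95, §1] and [BWDieudonneCrystallineIII]; the four parts have different flavours and I would organize the argument accordingly.

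For (1) I would first reduce to the affine local setting, where $\bar R$ admits a finite $p$-basis $\{\bar t_1,\dots,\bar t_n\}$. By the defining property of a $p$-basis, the map $k[T_1,\dots,T_n]\to \bar R$, $T_i\mapsto\bar t_i$, makes $\bar R$ formally étale over the source once everything is viewed in the $p$-adic topology. I would then tautologically lift $k[T_1,\dots,T_n]$ to $B=\wk[T_1,\dots,T_n]$, take its $p$-adic completion $\widehat B$, and invoke the infinitesimal lifting property of formally étale algebras to produce a unique $p$-complete, formally étale $\widehat B$-algebra $R$ reducing to $\bar R$ modulo $p$. Flatness of $R$ over $\wk$ is automatic from flatness of $\widehat B$ and étaleness of $R$ over $\widehat B$. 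For globalization, any two local lifts are connected by an isomorphism (non-unique) because formal smoothness of $R/\wk$ lets one solve the lifting problem for any $\bar R$-automorphism; the obstructions to gluing the local lifts then live in $H^{2}$ of a cotangent complex which vanishes by formal smoothness, so a global $R$ exists and is unique up to (non-unique) isomorphism. This gluing step is the main technical obstacle and is the reason the results are quoted from the literature.

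For (2), with $R$ as above and $\bar\sigma:\bar R\to\bar R$ the absolute Frobenius, I would produce $\sigma_R$ by the infinitesimal lifting criterion: the formal smoothness of $R$ over $\wk$ lets me inductively solve the lifting problem that asks for a $\wk$-semilinear endomorphism of $R/p^{n+1}R$ lifting the one on $R/p^{n}R$ and reducing to $\bar\sigma$ modulo $p$. The $p$-adic limit gives $\sigma_R$. For (3), étale unique lifting produces compatible étale $R/p^{n}R$-algebras $A_{n}$ lifting $\bar A$, and $A=\varprojlim A_{n}$ is the desired formally étale $R$-algebra, unique up to unique isomorphism. The Frobenius lift $\sigma_{A}$ is then the unique filler produced by étale lifting applied to the composite $R\xrightarrow{\sigma_{R}}R\to A$.

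For (4), since $\bar R/\mathfrak m$ is a field of characteristic $p$, the ideal $\mathfrak m$ contains $p$, so $\widehat R_{\mathfrak m}$ is automatically $p$-complete; flatness over $\wk$ survives $\mathfrak m$-adic completion, and the resulting ring is a lift of the $\bar{\mathfrak m}$-adic completion of $\bar R$. The inclusion $\sigma_{R}(\mathfrak m)\subseteq\mathfrak m$ holds because modulo $p$ the map $\bar\sigma$ is the $p$-th power, hence sends $\bar{\mathfrak m}$ into $\bar{\mathfrak m}$; consequently $\sigma_{R}$ is continuous for the $\mathfrak m$-adic topology and extends uniquely to $\widehat R_{\mathfrak m}$. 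The hardest part of the entire lemma is the gluing in (1); once that is settled, parts (2)-(4) are essentially formal applications of infinitesimal lifting.
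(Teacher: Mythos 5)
Your proposal is essentially sound and fills in the details that the paper delegates to a citation: for parts (1)--(3) the paper simply invokes \cite[Lem.~2.1, Lem.~2.5]{KimWansuRelative}, and your argument is in the same spirit as what those references carry out (lift via formal smoothness/\'etaleness, then pass to the $p$-adic limit). For part (4) your argument is the same as the paper's: note $\mathfrak{m}\supseteq(p)$ and that the absolute Frobenius on $\bar R$ sends $\bar{\mathfrak m}$ into itself, so $\sigma_R(\mathfrak m)\subseteq\mathfrak m$, hence $\sigma_R$ extends to $\widehat R_{\mathfrak m}=\varprojlim_i R/\mathfrak m^i$ termwise.

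One phrasing in your treatment of (1) should be fixed. You say the $p$-basis ``makes $\bar R$ formally \'etale over the source once everything is viewed in the $p$-adic topology.'' On a characteristic-$p$ ring the $p$-adic topology is discrete, so this qualifier is vacuous (or circular, since the $p$-adic structure only appears after you have produced the lift $R$). The correct statement is that a finite $p$-basis $\{\bar t_i\}$ makes $k[T_1,\ldots,T_n]\to\bar R$, $T_i\mapsto\bar t_i$, relatively perfect, and relatively perfect morphisms of $\mathbb F_p$-algebras are formally \'etale, full stop. You then lift against the successive nilpotent thickenings $W(k)[T]/p^{n+1}\to W(k)[T]/p^n$ and take the limit, which produces $R$ as a $p$-adically formally \'etale $\widehat{W(k)[T]}$-algebra. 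Your handling of (2) correctly flags the semilinearity issue; the clean way to make it precise is to lift over $\mathbb Z_p$ (where $R$ is still formally smooth, since $W(k)/\mathbb Z_p$ is formally \'etale) and then observe that the restriction to $W(k)$ is automatically $\sigma_{W(k)}$ by formal \'etaleness of $W(k)/\mathbb Z_p$. With these small repairs the proposal matches the intended argument.
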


	\begin{proof}
		(1) and (2) is a special case of  \cite[Lem. 2.1]{KimWansuRelative} (take $I=(p)$) and (3) is a special case of  the first part of \cite[Lem.  2.5 ]{KimWansuRelative}.	For (4), note first that  $ \sig (\mathfrak{m}) \sbt  \mathfrak{m} $. This follows from the fact that $ \mathfrak{m} $ contains $ p $, and the fact that the morphism $ \Spec \bar{R}\to \Spec \bar{R}$ induced by the absolute Frobenius of $ \bar{R} $ is identity on topological spaces. Hence we can define $ \sig_{\widehat{R}_{\mathfrak{m}}}: \widehat{R}_{\mathfrak{m}}  \to \widehat{R}_{\mathfrak{m}} $ by sending an element $ (r_i)_{i} \in \varprojlim\limits_{i}R/\mathfrak{m}^{i}=\widehat{R}_{\mathfrak{m}}$ to $ (\sig(r_i))_{i}\in \widehat{R}_{\mathfrak{m}} $. 
	\end{proof}

\begin{ex}[{\cite[1.1.2]{BWDieudonneCrystallineIII}}] \label{ExofSimpFram} The main examples of $\Rbar$ in our later applications are: \begin{enumerate}[(1)]
	\item $\Rbar$ is a perfect $k$-algebra (the empty $p$-basis case). In this case, the unique simple frame of $\Rbar$ (up to unique isomorphism) is given by $(W(\Rbar), \sig)$. 
	\item $\Rbar$ is a smooth $k$-algebra of finite type. Here Zariski locally $\Rbar$ indeed admits a finite $p$-basis: Zariski locally $\Rbar$ is \'etale over  some polynomial algebra $\bar{A}=k[x_1,\cdots, x_n]$ which has the standard $p$-basis $ \{x_1, \cdots, x_n\} $; then the image of this $p$-basis in $\Rbar$ form a $p$-basis of $\Rbar$, as the relative Frobenius map $\bar{A}\otimes_{\sig, \bar{A}}\Rbar\to \Rbar, \ a\otimes r\mapsto ar^p$ is an isomorphism (hence the Frobenius $\sig: \Rbar\to \Rbar$ can be identified with the canonical ring map $\Rbar\to \bar{A}\otimes_{\sig, \bar{A}}\Rbar, r\mapsto 1\otimes r$). 
	\end{enumerate}
\end{ex}

   \begin{defn}\label{Def: simple frame}
		Let $\barR$  be as in Lem. \ref{Froblifts}. 
		A \textbf{simple frame} of $\barR$, relative to $W(k)$, is a pair $\underline{R}=(R, \sig)$, where $R$ is a  lift of $\bar{R}$ and $\sig: R\to R$ is a Frobenius lift of $R$.  
	\end{defn} 

     \begin{rmk}\label{Rmk: FramPrism}
     A simple frame $(R,\sig)$ over $\wk$ of $\Rbar$ is the same thing as a crystalline prismatic prism over the base prism $(W(k), \sig)$ in the sense of Bhatt-Scholze \cite{PrismBS}. Hence in fancier language, simple frames of $\Rbar$ should perhaps be termed as  \textbf{(crystalline) prismatic charts} of $\Rbar$.
     \end{rmk}

	\subsection{Classification of $ p $-divisible groups over $ \bar{R} $}\label{S:BT/Rbar} 
	
		Let $\barR$  be as in Lem. \ref{Froblifts} and $\underline{R}=(R, \sig)$ a simple frame of $\Rbar$. Till the end of this section, we assume further that \textbf{$\Rbar$ is as in Exam. \ref{ExofSimpFram}.}  As a preparation for later sections, we review in this subsection results on classification of $p$-divisible groups over  $\bar R$ (and over $R$ in the next subsection \S \ref{BT/R}), in terms of linear data over the simple frame $(R, \sig)$. 
		
		We denote by $\widehat{\Omega}_{R}$ the module of $ p $-adically continuous differentials of $ R $, i.e., \[ \widehat{\Omega}_{R}: = \varprojlim\limits_{n} \Omega_{(R/p^nR)/W(k)}^{1}.\]	It is a finite projective $ R $-module due to the finite $ p $-basis assumption on $ \barR $.    	We denote by 	$ \mathbf{DM}(\udl{R}, \nabla) $ 	the category of Dieudonn\'e modules with connections. Here a \textbf{Dieudonn\'e module with connection} (or simply a \textbf{Dieudonn\'e module}) over $\underline{R}$ (or simply over $R$ when $\sig$ is chosen) is a tuple 	$ (M, \FF, \VV, \nabla_{M}) $,	where $M$ is a finite locally free $R$-module and	$\FF: M^{\sig}\to M, \ \VV: M\to M^{\sig} $	are maps between $R$-modules such that 	\begin{equation} \label{P&P}	\FF\circ \VV=p\cdot \id_{M^{\sig}};\ \ \  \VV\circ \FF=p\cdot \id_M,	\end{equation}	and where 	$ \nabla_{M}: M\to M\otimes_{R}\widehat{\Omega}_{R} $	is an integrable topologically quasi-nilpotent connection over the $ p $-adically continuous derivation $ d_{R}: R\to \widehat{\Omega}_{R} $ of $ R $, with respect to which $ \FF $ is horizontal, i.e.,	$\nabla_{M} \circ \FF = (\FF\otimes \id_{\widehat{\Omega}_R})\circ \sig^*(\nabla_{M})$.

	For a $ p $-divisible group $ \bar{H} $ over $ \bar{R} $, we denote by $\mathbb{D}^*(\bar{H})$ the Dieudonn\'e crystal\footnote{The  superscript  $*$ in $\mathbb{D}^*(\bar{H})$ is used to indicate that our Dieudonn\'e crystal here is contravariant.} of $ \bar{H}$ as in \cite{BBM}, which coincides with the construction in \cite{MessingBT} up to duality: to be precise,  our 
	$\mathbb{D}^*(\bar{H})$ here corresponds to the Dieudonn\'e crystal $\mathbb{D}(\bar{H}^*)$ in \cite{MessingBT}, with $\bar{H}^*$ the dual $p$-divisible group of $\bar{H}$.  Following usual convention, we write
	$ \mathbb{D}^*(\bar{H})(R) $
	for the evaluation of $\mathbb{D}^*(\bar{H})$ at the canonical PD-thickening 
	$ R\twoheadrightarrow \barR $. 
	By functoriality of the formation of Dieudonn\'e crystals, we have 
	$\mathbb{D}^{*, \sig}(\bar{H})=\mathbb{D}^*(\bar{H}^{\sig})$,
	where $\mathbb{D}^{*, \sig}(\bar{H})$ is the pullback along 
	$\sig: \bar R \to \bar R$ of 
	$\mathbb{D}^{*}(\bar{H})$.
	Consequently we have canonical isomorphism 
	$\mathbb{D}^{*, \sig}(\bar{H})(R)\cong \mathbb{D}^{*}(\bar{H})^{\sig}$
	of $R$-modules. The Frobenius
	$\bar H \to {\bar H}^{\sig}$ and Verschiebung 
	$\bar H\to {\bar H}^{\sig}$ induces morphism of crystals,
	\[ 
	\FF: \mathbb{D}^*(\bar{H}^{\sig})\to \mathbb{D}^*(\bar{H}), \ \ 
	\VV: \mathbb{D}^*(\bar{H})\to \mathbb{D}^*(\bar{H}^{\sig}),
	\]
	such that 
	$\FF\circ \VV = p\cdot \id_{\mathbb{D}^*(\bar{H})}$ and 
	$\VV\circ \FF = p\cdot\id_{\mathbb{D}^*(\bar{H}^{\sig})}$.
	Evaluating at the thickening $R\twoheadrightarrow \
	\bar R$ we obtain $R$-linear maps $\FF, \VV$ for $\mathbb{D}^*(\bar{H})(R)$, just like an object in $ \mathbf{DM}(\udl{R}, \nabla) $ satisfying \eqref{P&P}. Denote by 
	$\big(\mathbf{BT}/\bar{R}\big)$ 
	the category of $p$-divisible groups over $\bar{R}$. The following classification result is known.
	
	\begin{rmk}\label{Rmk:Semi}
	 If $H=\bar{\sA}[p^{\infty}]$ for some abelian scheme $\bar \sA$ over $\bar R$ (the case we mostly concern for later applications), we have canonical isomorphism of Dieudonn\'e crystals (\cite[3.3.7, 2.5.6]{BBM}),																\[\mathbb{D}^*(\bar H)\cong \mathbb{D}^*(\bar \sA)\cong \RI\pi_{\rm CRIS, *}\sO_{\bar \sA}^{\rm cris}, \]  																						where $\pi: \bar \sA\to \Spec \bar R$ is the structure morphism. It follows then that we have  the following canonical isomorphism of $R$-modules, which is Frobenius equivariant	\begin{equation}	\HH^1_{\cris}(\bar \sA/R)\cong \mathbb{D}^{*}(\bar{H})( R).\end{equation}	
	\end{rmk}

	\begin{thm}\label{ClaBTmodp}
		For any $ p $-divisible group $ \bar{H} $ over $ \bar{R} $, there exists a natural connection 
		$ \nabla_{M}: M\to M\otimes_{R}\widehat{\Omega}_{R} $
		for $M=\mathbb{D}^*(\bar{H})(R)$ such that 
		the tuple 
		\begin{equation}
			\underline{M}=(M,\  \FF, \VV,  \nabla_{M})    
		\end{equation} 
		is an object in $\mathbf{DM}(\udl{R}, \nabla)$. Moreover, such an assignment gives an equivalence of categories between 
		$\big(\mathbf{BT}/\bar{R}\big)$ and $ \mathbf{DM}(\udl{R}, \nabla) $.  
	\end{thm}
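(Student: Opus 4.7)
The plan is to treat the statement as a compilation of classical results and to prove it by reduction to the two cases of Example \ref{ExofSimpFram}, invoking classical Dieudonn\'e theory in the perfect case and de Jong's main theorem \cite{deJong95} in the smooth case. The construction of the functor itself is intrinsic and works uniformly across both cases.

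Concretely, since $p \geq 3$ the canonical surjection $R \twoheadrightarrow \bar{R}$ carries divided powers on $pR$ and is therefore a PD-thickening. Evaluating the Dieudonn\'e crystal $\mathbb{D}^*(\bar{H})$ on it produces the finite locally free $R$-module $M$, and the crystal property automatically yields, via comparison at the first infinitesimal PD-neighbourhood of the diagonal $R\otimes_{W(k)}R\twoheadrightarrow R$, a canonical integrable connection $\nabla_M \colon M \to M \otimes_R \widehat{\Omega}_R$; topological quasi-nilpotence is immediate from the standard explicit Taylor-series formula on the PD-envelope. The maps $F$ and $V$ are the evaluations of the Frobenius and Verschiebung morphisms of crystals induced by the corresponding isogenies of $\bar{H}$; the relations $FV = p$ and $VF = p$, together with horizontality of $F$ with respect to $\nabla_M$ (which encodes exactly that $F$ is a morphism of crystals), are all inherited from the crystal level by functoriality.

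For the equivalence of categories I would dispatch the two cases of Example \ref{ExofSimpFram} separately. In the perfect case one has $R = W(\bar{R})$ and $\widehat{\Omega}_R = 0$, so the connection data is vacuous and the statement reduces to classical contravariant Dieudonn\'e theory over perfect $\mathbb{F}_p$-algebras. In the smooth case it is precisely the main theorem of \cite{deJong95}. The main obstacle, which I would delegate to the cited references, is essential surjectivity in the smooth case: given an abstract $(M, F, V, \nabla_M) \in \mathbf{DM}(\underline{R}, \nabla)$, one must reconstruct the $p$-divisible group. This proceeds by descending to each closed point of $\Spec \bar{R}$, where classical Dieudonn\'e theory furnishes a local $p$-divisible group, then lifting successively via Grothendieck-Messing deformation theory up a PD-filtration on $R$, and using the connection $\nabla_M$ to ensure that the local lifts glue into a global object over $\bar{R}$. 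Full faithfulness is comparatively easy, being a formal consequence of the crystalline Grothendieck-Messing theorem together with the fully faithful behaviour of $\mathbb{D}^*$ on the relevant category of crystals of Dieudonn\'e type.
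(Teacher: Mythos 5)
Your proposal matches the paper's proof, which likewise dispatches the two cases of Example \ref{ExofSimpFram} by citing Gabber/Berthelot \cite{BerthelotDieudMod} (with \cite{LauJAMS13} and \cite{ScholWeinModuli13} as alternative proofs) for the perfect case and de Jong \cite[4.1.1, 2.3.4, 2.4.8]{deJong95} for the smooth case. One minor note: your sketch of how de Jong establishes essential surjectivity (lifting at closed points and gluing via $\nabla_M$) is not really how his argument runs, which proceeds through formal and rigid geometry, but since you explicitly delegate this step to the cited reference the distinction is immaterial.
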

	\begin{proof} If $\Rbar$ a perfect $k$-algebra, this is a (unpublished) result of Gabber, relying on a result of Berthelot \cite{BerthelotDieudMod} where the case of a perfect discrete valuation ring is dealt; see also \cite{LauJAMS13} and \cite{ScholWeinModuli13} for different proofs. In this case, the connection can even be suppressed in the definition of a Dieudonn\'e module. If $\Rbar$ is a smooth $k$-algebra of finite type, this follows from  \cite[4.1.1, 2.3.4, 2.4.8]{deJong95}: indeed, since $\bar R$ satisfies \cite[1.3.1.1]{deJong95} by (1.3.2.1) in loc. cit.  and $\mathfrak{X}=\Spec \bar R$ satisfies the hyperthesis of \cite[4.1.1]{deJong95}  by (2.4.7.2) in loc. cit.
		
	\end{proof}
	
	\subsection{Classification of $p$-divisible groups over $R$}\label{BT/R}
	
	The same setting as in the previous subsection \S \ref{S:BT/Rbar}.
	Now we start with a $ p $-divisible group $ H $ over $ R $ and write
	$ \bar{H}=H\otimes_{R}\bar{R} $. For the  $p$-divisible group $H^*$  there is constructed in \cite[IV. 1.14]{MessingBT} a universal extension 
	$0\to \omega_{H}\to \mathrm{E}(H^*) \to H^*\to 0$; 
	taking $\underline{\mathrm{Lie}}$ (following notation in loc. cit.), we get an exact sequence of locally free $R$-modules
	\[
	0\to \omega_{H}\to \underline{\mathrm{Lie}}\big(\mathrm{E}(H^*)\big) \to \underline{\mathrm{Lie}}(H^*)\to 0
	\]
	where $\omega_{H}$ is the sheaf of invariant differential of $H$.
	Moreover, it follows from the construction of $\mathbb{D}^{*}(\bar{H})$ that we have canonical isomorphism 
	$\underline{\mathrm{Lie}}\big(\mathrm{E}(H^*)\big)\cong \mathbb{D}^{*}(\bar{H})(R)$ 
	of $R$-modules (\cite[IV. 2.5.4]{MessingBT}, see also  \cite[3.3.5]{BBM}) and thus we can identify them. 
	Similarly we have an exact sequence for $\bar H$,
	\[
	0\to \omega_{\bar H}\to \mathbb{D}^{*}(\bar{H})(\bar R) \to \underline{\mathrm{Lie}}(
	\bar{H}^*)\to 0
	\]
	Here we stress that the submodule
	$ \omega_{H}\sbt \mathbb{D}^{*}(\bar{H})(R)$ is a locally direct summand of $\mathbb{D}^{*}(\bar{H})(R)$ which lifts the locally direct summand $ \omega_{\bar H}\sbt \mathbb{D}^{*}(\bar{H})(\bar R)$ of $\mathbb{D}^{*}(\bar{H})(\bar R)$.
	Let $\underline{M}\in \big(\mathbf{BT}/\bar{R}\big)$ be the Dieudonn\'e module  corresponding to $\bar H$. Write $\bar \FF: \bar{M}^{\sig} \to \bar M$ for the reduction modulo $p$ of $\FF$. Set $\bar{M}^1:=\omega_{\bar H}$; it is called the \textbf{Hodge filtration} of $ \bar{M} $. Then we have the relation,	\begin{equation}\label{HodgeFil}		\bar{M}^{1,\sig}=\Ker (\bar \FF)\sbt \bar{M}^{\sig}.	\end{equation}
	
	Denote by 
	$\big(\mathbf{BT}/R\big)$ the common category of 
	$p$-divisible groups over $\Spec R$ and over $\Spf R$ (justified by \cite[2.4.4]{deJong95}) and by $ \mathbf{AFDM}(\underline{R}, \nabla) $ the category of tuples $(M, M^1, \FF, \VV, \nabla_{M}) $, 
	where
	$ (M,  \FF, \VV, \nabla_{M}) $
	is an object in $\mathbf{DM}(\underline{R}, \nabla)$, and where $M^1\sbt M$ is a locally direct summand, lifting the locally direct summand 
	${\bar M}^1\subset \bar M$.
	Morphisms are obvious ones.  We call an object in 
	$ \mathbf{AFDM}(\underline{R}, \nabla) $ 
	an \textbf{admissibly filtered Dieudonn\'e module over $R$} over $\underline{R}$ (or simply over $R$, when $\sig$ is chosen); cf. \cite[V. 1.4]{MessingBT}.

	\begin{rmk}\label{AbComp}
		For the purpose of future reference, we recall the following well-known comparison results, that underlines the crystalline Dieudonn\'e theory. If $\sA$ is an  abelian scheme over $R$, with $\bar \sA$ its pullback to $\bar R$, we have  canonical isomorphism of $R$-modules (\cite[V. 2.3.7]{Bert74}, also cf. \cite[7.26.3]{BO78})
		\begin{equation}\label{CrideRhamComp}
			\HH^1_{\dR}(\sA/R)\cancong \HH^1_{\cris}(\bar \sA/R).
		\end{equation}
		Moreover, we have the following a canonical isomorphism of filtered $R$-modules
		\begin{equation*}
			\big( \mathbb{D}^{*}(\bar{H})( R)\spt \omega_{H}\big) \cancong \big(\HH^1_{\dR}(\sA/R) \spt \omega_{A}\big).
		\end{equation*}
	\end{rmk}

	\begin{thm}\label{ClaBT}
		The assignment $G\mapsto \big(\mathbb{D}^{*}(\bar H)(R), \omega_{H}, \FF, \VV, \nabla_{M}\big)$ gives a category equivalence between $\big(\mathbf{BT}/R\big)$ and $ \mathbf{AFDM}(\underline{R}, \nabla) $.
	\end{thm}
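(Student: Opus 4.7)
The plan is to bootstrap the equivalence of Theorem \ref{ClaBTmodp} (over $\Rbar$) to one over $R$ by means of $p$-adic Grothendieck--Messing deformation theory. Since $p\geq 3$, the kernel of each surjection $R/p^{n+1}R\twoheadrightarrow R/p^{n}R$ carries the standard divided powers, so these are nilpotent PD-thickenings; moreover, by the algebraization result \cite[2.4.4]{deJong95} already cited in the statement, the category $(\mathbf{BT}/R)$ is identified with the category of compatible systems of $p$-divisible groups $(H_n)_n$ with $H_n$ over $R/p^nR$.

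For essential surjectivity, I would start with a tuple $(M, M^1, \FF, \VV, \nabla_M)\in \mathbf{AFDM}(\underline{R}, \nabla)$ and apply Theorem \ref{ClaBTmodp} to the reduction $(\Mbar, \bar{\FF}, \bar{\VV}, \bar{\nabla})$ to obtain a $p$-divisible group $\bar H$ over $\Rbar$ together with a canonical identification $M\cong \mathbb{D}^*(\bar H)(R)$ matching $\FF, \VV, \nabla_M$. Then I would lift $\bar H$ step by step up the tower $(R/p^nR)_n$: assuming $H_n$ over $R/p^nR$ has been constructed with Hodge filtration equal to the image of $M^1$ in $\mathbb{D}^*(\bar H)(R/p^nR)$, Grothendieck--Messing (cf.\ \cite{MessingBT}) applied to the PD-thickening $R/p^{n+1}R\twoheadrightarrow R/p^nR$ produces a unique lift $H_{n+1}$ over $R/p^{n+1}R$ whose Hodge filtration matches the image of $M^1$ in $\mathbb{D}^*(\bar H)(R/p^{n+1}R)$. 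Algebraizing the resulting compatible system yields a $p$-divisible group $H$ over $R$ realizing the prescribed filtered Dieudonn\'e datum, with $\omega_H=M^1$ by construction.

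For full faithfulness, given $H, H'$ over $R$ with associated objects $(M, M^1, \dots)$ and $(M', (M')^1, \dots)$, a morphism of the latter restricts to a morphism of Dieudonn\'e modules over $\Rbar$ and hence by Theorem \ref{ClaBTmodp} comes from a unique $\bar f: \bar H\to \bar H'$. The hypothesis that the induced morphism of crystals carries $M^1$ into $(M')^1$ then lets me lift $\bar f$ uniquely to each level $R/p^nR$ via the morphism version of Grothendieck--Messing, and these lifts glue to a morphism $H\to H'$ via algebraization.

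The main obstacle is ensuring the compatibility along the $p$-adic tower: Grothendieck--Messing in its usual form is stated for a single nilpotent PD-thickening, so one must verify that the inverse system of lifts converges to an honest $p$-divisible group over $\Spec R$---this is precisely the content of \cite[2.4.4]{deJong95}. A pleasant feature is that the connection $\nabla_M$ imposes no extra constraint at this stage: it is intrinsic to the Dieudonn\'e \emph{crystal} $\mathbb{D}^*(\bar H)$ (through the formal smoothness of $R$ over $\wk$), and is therefore recovered automatically once $\bar H$, and hence $H$, has been reconstructed.
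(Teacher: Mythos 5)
Your proposal is correct and follows essentially the same route as the paper: reduce to $\Rbar$ via Theorem \ref{ClaBTmodp}, then apply Grothendieck--Messing deformation theory (the paper cites \cite[V, 1.6]{MessingBT} directly) together with the algebraization statement \cite[2.4.4]{deJong95}. Your write-up is simply more explicit about climbing the $p$-adic tower $(R/p^nR)_n$, a step the paper compresses into its one-line citation; the only nuance you leave implicit is that the paper phrases Grothendieck--Messing in terms of lifting the \emph{dual} $\bar{H}^*$, since $\mathbb{D}^*(\bar H)$ arises from the universal extension of $H^*$.
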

	
	\begin{proof}
		To lift a $p$-divisible group $\bar H$ over $\bar R$ to $R$ is the same thing as lifting its dual ${\bar H}^*$ to $R$, the assertion follows from the combination of Thm. \ref{ClaBTmodp} and Grothendieck-Messing deformation theory (\cite[V, 1.6]{MessingBT}) which in our setting says that lifting ${\bar H}^*$ to $R$ is equivalent to lifting the locally direct summand $ \omega_{\bar H}\sbt \mathbb{D}^{*}(\bar{H})(\bar R)$ to a locally direct summand of $\mathbb{D}^{*}(\bar{H})(R)$. 
	\end{proof}

	\subsection{Base change along simple frames}\label{BCofFrame}
	
	Let $\bar R'$ be as in Exam. \ref{ExofSimpFram} and $\underline{R'}=(R', \sig')$ a simple frame of $\bar R'$ over $W(k)$. Let $f: \underline{R}\to \underline{R'}$ be a morphism of simple frames over $W(k)$ (i.e., a map $f: R\to R'$ of $W(k)$-algebras, compatible with Frobenius lifts). Then we have commutative diagrams as below induced by base change along $f$ in obvious senses.
	\begin{equation}\label{Functrltyfram}
		\xymatrix{\big(\mathbf{BT}/\bar R\big)\ar[r]^{\cong\ \ }\ar[d]&\mathbf{DM}(\underline{R}, \nabla)\ar[d]\\
			\big(\mathbf{BT}/\bar R'\big) \ar[r]^{\cong\ \ } &\mathbf{DM}(\underline{R'}, \nabla)}
		\ \ \ 
		\xymatrix{\big(\mathbf{BT}/R\big)\ar[r]^{\cong\ \ }\ar[d]&\mathbf{AFDM}(\underline{R}, \nabla)\ar[d]\\
			\big(\mathbf{BT}/R'\big) \ar[r]^{\cong\ \ } &\mathbf{AFDM}(\underline{R'}, \nabla)}
	\end{equation}
	
	\subsection{Partially divided Frobenius}\label{ParDivFrob}
	The setting is the same as in the previous two subsections \S \ref{S:BT/Rbar}, \S \ref{BT/R}. Let  $ (M, M^1, \FF, \VV, \nabla_{M}) $ be an object in $ \mathbf{AFDM}(\underline{R}, \nabla) $. 
	 Assume now that  the submodule $M^1\subset M$ is a (not just locally) direct summand of $M$.
	 Let $M=M^1\oplus M^0$ be a decomposition of $M$ into $R$-submodules; such a decomposition is called a \textbf{normal decomposition} of $\underline{M}$ (or simply of $ M $).  Define the following maps,
	\begin{equation}\label{defGamma}
		\Gamma:= \frac{1}{p}\FF|_{M^{1, \sig}}\oplus \FF|_{M^{0, \sig}},\ \ \  \rf:=p\id_{M^{1, \sig}}\oplus \id_{M^{0, \sig}},
	\end{equation}
	so that we have $\FF= \Gamma\circ\rf$. We shall call $\Gamma$ the \textbf{partially divided Frobenius} of $M$ w.r.t. the normal decomposition $M=M^1\oplus M^0$. The next lemma describes the most important property of $\Gamma$, with the point being that a normal decomposition of $M$ enables us to decompose $\FF$ as the composition of an integral part $\Gamma$ with a rational part $\rf$. Such a decomposition is important for later applications.   
	
	\begin{lem}\label{Zipisom}
		The map $\Gamma$ defiend in \eqref{defGamma} is an isomorphism of $R$-modules.
	\end{lem}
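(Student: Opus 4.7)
The plan is: first verify that $\Gamma$ is well-defined (i.e.\ that $\tfrac{1}{p}\FF|_{M^{1,\sig}}$ lands in $M$ and not only in $M[\tfrac{1}{p}]$), then reduce the isomorphism claim to its mod-$p$ reduction via Nakayama on the $p$-adically complete ring $R$, and finally deduce the mod-$p$ statement by combining the Hodge identity \eqref{HodgeFil} with the relation $\VV \FF = p\,\id_{M^\sig}$.

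For well-definedness, \eqref{HodgeFil} gives $\bar M^{1,\sig} = \Ker \bar\FF$, so $\FF(M^{1,\sig}) \subseteq pM$; since $R$ is $W(k)$-flat the locally free module $M$ is $p$-torsion-free, so $\tfrac{1}{p}\FF|_{M^{1,\sig}}$ genuinely lands in $M$. Thus $\Gamma$ is an $R$-linear map between finite locally free modules of equal rank, and by Nakayama applied $p$-adically it suffices to show the reduction $\bar\Gamma: \bar M^{\sig} \to \bar M$ is an isomorphism.

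Writing $\bar\Gamma = \bar\Gamma_1 \oplus \bar\Gamma_0$ with $\bar\Gamma_1$ the reduction of $\tfrac{1}{p}\FF|_{M^{1,\sig}}$ and $\bar\Gamma_0 = \bar\FF|_{\bar M^{0,\sig}}$, the identity $\VV \circ \tfrac{1}{p}\FF = \tfrac{1}{p}\VV\FF = \id$ on $M^{1,\sig}$ descends to $\bar\VV \circ \bar\Gamma_1 = \id_{\bar M^{1,\sig}}$, so $\bar\Gamma_1$ is split injective; meanwhile $\bar\VV \circ \bar\FF = \overline{p\,\id_{M^\sig}} = 0$. These two facts force injectivity of $\bar\Gamma$: if $\bar\Gamma_1(x) + \bar\Gamma_0(y) = 0$ with $x \in \bar M^{1,\sig}$, $y \in \bar M^{0,\sig}$, applying $\bar\VV$ gives $x = 0$, and then $\bar\FF(y) = 0$ together with $\bar M^{0,\sig} \cap \Ker \bar\FF = 0$ from \eqref{HodgeFil} yields $y = 0$.

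To upgrade injectivity of $\bar\Gamma$ to bijectivity, I would pass to residue fields: at each closed point $\bar x$ of $\Spec \bar R$ the fiber of $\bar\Gamma$ is an injective $\kappa(\bar x)$-linear map between vector spaces of the same finite dimension, hence bijective; Nakayama at each maximal ideal then gives surjectivity of $\bar\Gamma$ globally, and hence of $\Gamma$ over $R$. The substantive step is the mod-$p$ injectivity, where the simultaneous use of \eqref{HodgeFil} and $\VV\FF = p\,\id$ is essential; everything else is routine. The only place I expect to have to think carefully is bookkeeping with the twist $\sig$, and making sure the retraction identity $\bar\VV \circ \bar\Gamma_1 = \id$ really follows from the $R$-level identity by reducing modulo $p$.
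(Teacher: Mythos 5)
Your argument for injectivity of $\bar\Gamma$ over $\bar R$ is correct, and the retraction identity $\bar\VV\circ\bar\Gamma_1=\mathrm{incl}_{\bar M^{1,\sig}\hookrightarrow\bar M^{\sig}}$ is a genuinely nice observation. But there is a gap in the step where you ``pass to residue fields'': injectivity of an $\bar R$-linear map of locally free modules does \emph{not} imply injectivity of its fibers $\bar\Gamma\otimes\kappa(\bar x)$ (think of multiplication by a nonzero-divisor). So the claim ``the fiber of $\bar\Gamma$ is an injective $\kappa(\bar x)$-linear map'' does not follow from what you proved; you would have to rerun the injectivity argument over $\kappa(\bar x)$. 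The retraction identity and $\bar\VV\circ\bar\FF=0$ do survive base change, but the input $\bar M^{0,\sig}\cap\Ker\bar\FF=0$ requires knowing that the formation of $\Ker\bar\FF$ commutes with $\otimes\kappa(\bar x)$, i.e.\ that $\Ker\bar\FF\otimes\kappa=\Ker(\bar\FF\otimes\kappa)$. This is true --- one can appeal to \eqref{HodgeFil} applied to the $p$-divisible group $\sA_{\bar x}\otimes_{\bar R}\kappa(\bar x)$ over the residue field, using that the Dieudonn\'e module and its Hodge filtration commute with base change --- but it is an additional input that you neither state nor cite, and without it the fiberwise injectivity does not follow.

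The paper sidesteps this entirely by proving surjectivity of $\Gamma$ directly over $R$: \eqref{HodgeFil} gives $\FF^{-1}(pM)=\pi^{-1}(\bar M^{1,\sig})=M^{1,\sig}\oplus pM^{0,\sig}$, and combining this with $pM\subseteq\Im\FF$ (from $\FF\VV=p\,\id$) shows $\Im\Gamma=M$ by a short computation; then a reduction to the completions $\widehat R_{\mathfrak m}$ plus Nakayama over a local ring (surjection of finite free modules of equal rank is an isomorphism) finishes. That route is both shorter and more robust, since surjectivity \emph{is} a property that descends to the issue at hand without worrying about base change of kernels. Your mod-$p$ Nakayama framing of the problem is fine, but the mod-$p$ statement you should be proving is surjectivity of $\bar\Gamma$, not injectivity; and surjectivity mod $p$ follows from the same $\FF^{-1}(pM)$ identity read modulo $p$.
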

	
	\begin{proof}

		
		Let us first note that $\Gamma$ is surjective: indeed from \eqref{HodgeFil} we obtain the equality displayed below, which implies $\Im(\Gamma)=M$:
		\[
		\FF^{-1}(pM)=\pi^{-1}(\bar{M}^{1,\sig})=M^{1,\sig}+p M^{\sig}=M^{1,\sig}\oplus p M^{0,\sig},
		\]
		where $\pi: M^{\sig}\to \bar{M}^{\sig}$ is the canonical reduction modulo $p$ map.
		
		It is enough to show  that for every  maximal ideal $\mathfrak{m}$ of $R$, the pull back to $\widehat{R}_{\mathfrak{m}}$ of $\Gamma$ is an isomorphism. To ease notation, write $A=\widehat{R}_{\mathfrak{m}}$ and let $(A, \sig)$ be the unique simple frame of the $\mathfrak{m}$-adic completion  of $\bar R$,  induced by $(R, \sig)$ as in Lem. \ref{Froblifts}. By functoriality as discussed above \eqref{Functrltyfram}, the base change along $(R, \sig)\to (A, \sig)$ of $\underline{M}$ is equal to the admissibly filtered Dieudonn\'e module of $H\otimes_RA$ if $H$ is the $p$-divisible group over $R$ corresponding to $\underline{M}$. So we are reduced to show the $\Gamma$ map over $(A, \sig)$ corresponding to $H\otimes_RA$ and the decomposition $ M_A=M_A^1\oplus M_A^0$ is an isomorphism. But as $A$ is local, the source and target of $\Gamma:M^{\sig}\to M$ are free $A$-modules of the same rank, by Nakayama's lemma the assertion follows from the surjectivity of $\Gamma$. 
	\end{proof}

We remark that in later sections we will not use the full power of the classification results, Thm. \ref{ClaBTmodp} and Thm. \ref{ClaBT}. We only need the fact that given a $p$-divisible group over $\Rbar$ (resp. over $R$). one can associate with it an object in $\mathbf{DM}(\underline{R}, \nabla)$ (resp. in $ \mathbf{AFDM}(\underline{R}, \nabla) $) and such an association is  compatible with base change of simple frames.

	\section{Good reduction of Shimura varieties of Hodge type}
	\label{S:shivar}
	
	\subsection{Shimura varieties of Hodge type} 
	
	Let $\textbf{G}$ be a (connected) reductive group over $\mathbb{Q}$ and $\textbf{X}$ a $ \mathbf{G}(\mathbb{R})$ conjugacy class of homomorphisms
	\[
	h:\mathbb{S}:=\Res_{\mathbb{C}/\mathbb{R}}\mathbb{G}_m\to \textbf{G}_{\mathbb{R}}
	\]
	of algebraic groups over $\mathbb{R}$, such that $ (\mathbf{G}, \mathbf{X}) $ is a Shimura datum in the sense that they satisfy axioms (2.1.1.1)-(2.1.1.3) of \cite[2.1.1]{DeligneCanonicalmodel}.  Suppose that  $V$ is a finite-dimensional $\mathbb{Q}$-vector space with a perfect alternating pairing $\psi$ and write $\GSp=\GSp(V, \psi)$ for the corresponding group of symplectic similitudes. Then we get the most important example of Shimura datum $(\GSp, \textbf{S}^{\pm})$ with $\textbf{S}^{\pm}$ the Siegel double space, which is defined to be the set of homomorphisms $\mathbb{S}\to \GSp_{\mathbb{R}}$ such that: (1) The $\mathbb{C}^{\times}$ action on $V_{\mathbb{R}}$ gives rise to a Hodge structure of type $(-1, 0)$ and $(0, -1)$;
	(2) $(x, y)\mapsto \psi(x, h(i)y)$ is (positive or negative) definite on $V_{\mathbb{R}}$.
	
	In this paper we consider a  Shimura datum $(\textbf{G} , \textbf{X})$ of Hodge type; i.e.,  there exists an embedding of Shimura data  $ (\textbf{G} , \textbf{X})\hookrightarrow (\GSp, \textbf{S}^{\pm})$ for some $(\GSp, \textbf{S}^{\pm})$.  Let $ \mathsf{K}=\mathsf{K}_p\mathsf{K}^{p}\sbt \textbf{G}(\mathbb{A}_f) $ be an open compact subgroup such that $ \mathsf{K}_p \sbt  \mathbf{G}(\mathbb{Q}_p)$ is a hyperspecial subgroup and that $ \mathsf{K}^{p} \sbt \mathbf{G}(\mathbb{A}_{f}^{p})$ is sufficiently small (hence is neat). The condition that $ \mathsf{K}_p $ is hyperspecial means that there is a reductive group $ \mathcal{G} $ over $\mathbb{Z}_{(p)} $, which we fix from now on, such that $ \mathsf{K}_p=\mathcal{G}(\mathbb{Z}_p) $.
	The condition that $ \mathsf{K}^{p}$ is sufficiently small guarantees that the double quotient 
	\begin{equation*}
		\Sh_\mathsf{K}(\textbf{G}, \textbf{X})_{\mathbb{C}}:=\textbf{G}(\mathbb{Q})\backslash \textbf{X}\times \textbf{G}(\mathbb{A}_f)/\mathsf{K}
	\end{equation*}
	has the structure of a \textbf{smooth} quasi-projective complex variety by a theorem of Baily-Borel.  Results of Shimura, Deligne, Milne and others imply that, up to isomorphism, $\Sh_\mathsf{K}(\textbf{G},\textbf{ X})_{\mathbb{C}}$ has a unique quasi-projective smooth model $\Sh_{\mathsf{K}}(\textbf{G}, \textbf{X})$  over the reflex field $ E $ of $ (\textbf{G}, \textbf{X}) $.  The reflex field $ E $ only depends on the Shimura datum $ (\textbf{G},\textbf{ X}) $. For $ (\GSp, \textbf{S}^{\pm}) $, the reflex field is  $ \mathbb{Q} $. 
	
	\subsection{Integral canonical models}
	
	As explained in \cite[2.3.1, 2.3.2]{KisinIntegralModels}, for a given Shimura datum $(\textbf{G},\textbf{X})$  with embedding $(\mathbf{G}, \mathbf{X})\hookrightarrow  (\GSp, \textbf{S}^{\pm})$, using Zarhin's trick we may modify  $ (V, \psi ) $ so that there exists a $ \mathbb{Z}_{(p)} $-lattice $ \Lmd $ of $ V $ with the following property: (1). the pairing $ \psi $ induces a perfect $ \mathbb{Z}_{(p)} $-pairing on $ \Lmd $, still denoted by $ \psi $; (2) the embedding $ \mathbf{G} \to \GSp$ is induced by an embedding $  \mathcal{G} \hookrightarrow \GSp(\Lmd, \psi)$ of reductive group schemes over $ \mathbb{Z}_{(p)} $. From now on, we fix such an embedding and accordingly the modified embedding of Shimura data $(\mathbf{G}, \mathbf{X})\hookrightarrow  (\GSp, \textbf{S}^{\pm})$.  Set $ \tilde{\mathsf{K}}_{p}=\GSp(\mathbb{Z}_p) $. By \cite[2.1.2]{KisinIntegralModels} there exists an open compact subgroup $ \tilde{\mathsf{K}}^{p}\sbt \GSp(\mathbb{A}_f) $ containing $\mathsf{K}^{p}  $ such that $ \iota $ induces an embedding of Shimura varieties over $ E $,
	\[ \shk\hookrightarrow \Sh_{\tilde \K}\otimes_{\mathbb{Q}}E.\] Moreover, if $\tilde{\mathsf{K}}^{p} $ is sufficiently small, $ \Sh_{\tilde{\mathsf{K}}}$ has a quasi-projective smooth model over $ \mathbb{Z}_{(p)} $, denoted by $ \tilde{\sS} = \tilde{\sS}_{\K}$, which has an explicit moduli interpretation as described in (\cite[1.3.4]{KisinModpPoints}). In what follows we always assume that $\mathsf{K}^{p}$ and $\tilde{\mathsf{K}}^{p} $ are sufficiently small, and we will also fix a $\Z$-lattice $\Lmd_{\Z}$ of the $\Z_{(p)}$-module $\Lmd$ such that $\Lmd_{\Z}\otimes \Zhat$ is $\Ktil$-stable. The choice of such a $\Z$-lattice allows one to describe the scheme $\tilde{\sS}$ as moduli space of polarized abelian varieties (not just up to prime to $p$-isogeny). In particular, it comes with a universal abelian scheme, denoted by $ \sA$. 
	
	Fix a place $ v $ of $ E $ above $ p $. Denote by $ \mathcal{O}_{E, (v)} $ the localization at $ v $ of the ring of integers  $ \mathcal{O}_E $  of $ E $.  Denote by $ \sS =\sk(\textbf{G}, \textbf{X}) $ the normalization of the schematic closure of $ \shk$ in $ \tilde{\sS}\otimes_{\Z_{(p)}}\sO_{E, (v)} $. Recall that we have the assumption $p\geq 3$. The following theorem is now well-known and is due to Vasiu and Kisin, independently.  
	\begin{thm}
	The scheme $\sS$ is smooth over $\sO_{E,(v)}$ and is the  integral canonical model over $\sO_{E, (v)}$ of $\Sh_{\K}$. 
	\end{thm}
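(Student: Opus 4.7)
The plan is to follow the Kisin strategy: produce the Hodge tensors cutting out $\mathcal{G}$ inside $\GSp(\Lmd,\psi)$, use them to control the deformation theory of the universal abelian scheme pulled back from $\tilde{\sS}$, and then match deformation rings with completed local rings of $\sS$ at characteristic $p$ points. By construction $\sS\to \Spec \sO_{E,(v)}$ is normal and generically smooth (it recovers $\shk$ on the generic fibre), and it is finite over the Zariski closure of $\shk$ in $\tilde{\sS}\otimes_{\Z_{(p)}}\sO_{E,(v)}$. So the core of the proof is smoothness of $\sS$ along its special fibre, and once this is done the integral canonical (extension) property will follow from the corresponding property of the Siegel integral model $\tilde{\sS}$ combined with normality.

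The first step is to choose a finite family of $\mathcal{G}$-invariant tensors $(s_\alpha)\subset \Lmd^{\otimes}$ whose pointwise stabilizer in $\GL(\Lmd)$ is exactly $\mathcal{G}$; such a family exists because $\mathcal{G}$ is reductive over $\Z_{(p)}$. Pulling back the universal abelian scheme from $\tilde{\sS}$, the $s_\alpha$ give rise on the generic fibre to horizontal sections of $\HH^1_{\dR}(\sA/\shk)^{\otimes}$ and to parallel $\ell$-adic tensors, by the theory of absolute Hodge cycles for Hodge-type Shimura varieties. The second step is, for a closed point $x$ of $\sS$ with residue field $\kappa$ above $v$, to produce crystalline tensors $t_\alpha\in \mathbb{D}^{*}(\sA_x[\pinf])(\wkap)^{\otimes}$ that specialize to the de Rham tensors over any characteristic zero lift of $x$; this uses Faltings' and Kisin's integral $p$-adic comparison theorems and the fact that the $s_\alpha$ lie in the crystalline locus.

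The third and main step is to show that the $t_\alpha$ define an integral $\sG_{\wkap}$-structure on the Dieudonn\'e module $M=\mathbb{D}^{*}(\sA_x[\pinf])(\wkap)$, so that the Hodge filtration arises from a cocharacter $\tilde{\upmu}$ of $\sG_{\wkap}$ in the given conjugacy class. Granting this, Grothendieck-Messing theory as recalled in Thm.~\ref{ClaBT} shows that the universal deformation of the triple $(\sA_x[\pinf],(t_\alpha),\text{polarization})$ is prorepresented by a formally smooth $\wkap$-algebra $R^{\upmu}_x$, whose tangent space is identified with $\Lie\Umin$ for the opposite unipotent radical attached to $\tilde{\upmu}$; this is the characteristic $p$ incarnation of Faltings' description of the local structure of Hodge-type Shimura varieties. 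The fourth step is to identify the completed local ring $\widehat{\sO}_{\sS,x}$ with $R^{\upmu}_x$ via the natural classifying map. The key obstacle here is exactly this identification: one needs to show that the normalisation procedure defining $\sS$ carves out, inside the Siegel deformation space, precisely the locus where the tensors $t_\alpha$ extend horizontally. This is proved by combining (i) horizontality of the $s_\alpha$ under the Gauss-Manin connection on $\shk$, so that they extend canonically to any formally smooth deformation over $\sO_{E,(v)}$, with (ii) normality of $\sS$ together with the fact that both $\widehat{\sO}_{\sS,x}$ and $R^{\upmu}_x$ are complete normal domains of the same Krull dimension and share the same generic fibre.

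The hardest technical point I expect is constructing the tensors $t_\alpha$ integrally and showing that they cut out $\sG_{\wkap}$, rather than only $\sG_{\wkap}[1/p]$; this requires the integral comparison between \'etale and crystalline cohomology and uses the hypothesis $p\geq 3$ in an essential way (to apply Kisin's theory of Breuil-Kisin modules and to have a well-behaved Grothendieck-Messing theory). Once smoothness of $\sS$ at each closed point of the special fibre is established, smoothness over $\sO_{E,(v)}$ follows by a standard fibre-dimension argument, and the integral canonical (N\'eronic) property is a formal consequence of the extension property for $\tilde{\sS}$, the defining normalisation property of $\sS$, and Zariski-Nagata purity.
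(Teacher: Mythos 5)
The paper does not actually prove this theorem; it records it as a known result, due independently to Vasiu and Kisin, and points to the literature (notably Kisin's paper on integral models of abelian type, cited in the text). Your proposal is therefore doing something different from the paper: it is a high-level sketch of Kisin's actual proof. Read as such it is mostly faithful, and you correctly isolate the main technical difficulty, namely producing integral crystalline tensors $t_\alpha$ whose pointwise stabilizer inside $\GL\big(\Ds(\sA_x[\pinf])(\wkap)\big)$ is a reductive group scheme isomorphic to $\sG_{\wkap}$, and not merely to $\sG$ after inverting $p$. That is Kisin's key lemma, resting on the integral comparison between \'etale and crystalline cohomology (Faltings, Breuil--Kisin) together with a lattice-theoretic argument; the restriction $p\geq 3$ enters there and in Grothendieck--Messing theory, exactly as you say.

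A few imprecisions, none of which changes the overall verdict. First, the tensors cut out $\sG$ inside $\GL(\Lmd)$, not inside $\GSp(\Lmd,\psi)$ as your opening sentence has it (you get this right in your ``first step''). Second, your item (ii) for identifying $\widehat{\sO}_{\sS,x}$ with $R^{\upmu}_x$ --- that both are complete normal domains of the same Krull dimension sharing a generic fibre --- is not enough by itself: two such rings need not coincide unless one dominates the other inside a common ambient ring. What makes the argument work is that the image of $\Spf\widehat{\sO}_{\sS,x}$ in $\Spf\widehat{\sO}_{\tilde{\sS},x}$ is \emph{contained} in $\Spf R^{\upmu}_x$ (this is where density of characteristic-zero points and horizontality of the tensors, your (i), do the work), after which the formal smoothness, hence normality, of $R^{\upmu}_x$ forces equality because $\widehat{\sO}_{\sS,x}$ is by construction the normalization of its image. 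Third, I would drop the appeal to Zariski--Nagata purity: the extension property of the tower $\{\sS_{\K}\}_{\K^p}$ is established via the corresponding property for the Siegel tower and the construction of $\sS_{\K}$ as a normalized closure, and purity is not where the weight is carried.
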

   Strictly speaking, \textbf{integral canonical model} refers to a tower of models $\{\sS_{\K}\}_{\K^p}$ over $\sO_{E, (v)} $ for the tower $\{\shk\}_{\K^p}$, with $\K=\K^p\K^p$ and $\K^p$ varying; see \cite[\S 2]{Milnepoinsmodgoodp} for its precise meaning. Here we abuse language since soon $\K$ will be fixed till the end of  this paper. 
    
    In particular, we obtain a finite morphism 	$\varepsilon: \mathcal{S}\to \tilde{\mathcal{S}}$  of schemes over $ \mathcal{O}_{E, (v)} $. We call the pull-back to $ \mathcal{S} $ of $ \mathcal{A}$ the \textbf{universal abelian scheme} of $ \mathcal{S} $, still denoted by $ \mathcal{A} $. Write $ \kap$ for the residue field of $ \mathcal{O}_{E, (v)} $ and $ S=S_{\K}$ for the special fibre of $ \sS$. In particular, $ S $ is a quasi-projective smooth scheme over $ \kap$, coming with a universal abelian scheme $ \sA=\sA_{\kap}$.  
	
	In fact, the existence of the hyperspecial subgroup $ \mathsf{K}_{p} $ implies that $ E $ is unramified at $ p $ (\cite[~4.7]{ShimuraVarietiesandMotives}), and hence we have 
	$\sO_{E, v}=W(\kap)$, where $ \sO_{E, v} $ is the completion of $ \sO_{E,(v)} $ with respect to its maximal ideal. In what follows, we will mainly work over $W(\kap)$ or over $\kap$. We will use the same notations for the base change to $W(\kap)$ of those objects  defined over $O_{E,(v)}$ (e.g., the integral model ~$\sS$).

	\subsection{Reduction of Hodge cocharacters and their Frobenius twists}\label{S:cochar}
	
	As shown in \cite[1.3.2]{KisinIntegralModels}, the $ \mathbb{Z}_{(p)} $-reductive group scheme $ \mathcal{G} $ can be realized as the schematic stabilizer of a finite set of tensors $ (s_{\alpha})_{\alpha}\sbt \Lmd^{\otimes}=(\Lmd^*)^{\otimes} $; i.e., for any $ \mathbb{Z}_{(p)} $-algebra $ R$, 	\begin{equation*}	\mathcal{G}(R)=\{g\in \GL(\Lambda_{R}^*)\ \big| \ g(s_{\alpha, R})=s_{\alpha, R}, \forall 	\alpha\}, \end{equation*} where $s_{\alpha, R}\in (\Lams_R)^{\otimes}$ denotes the tensor induced by $s_{\alpha}$.	Here for the functoriality consideration later, we view  $\sG $ as a reductive $\Z_{(p)}$-subgroup scheme of  $ \GL(\Lmd^*)$ via the dual representation $\GL(\Lmd)\cancong \GL(\Lmd^*)$, \begin{equation}\label{EmbedGrp/Z(p)}
	\iota: \sG\hookrightarrow \GSp(\Lmd, \psi)\hookrightarrow\GL(\Lmd)\cancong \GL(\Lmd^*).
	\end{equation}
	 Write $ G $ for the special fibre of $ \mathcal{G} $. It is a (connected) reductive group over $ \Fp$. 
	
	For any $h\in \textbf{X}$, there is an associated Hodge cocharacter 
	$\nu_h: \mathbb{G}_{m,\mathbb{C}}\to \textbf{G}_{\mathbb{C}}$
	which can be described as follows. For any $\mathbb{C}$-algebra $R$, we have $R\otimes_{\mathbb{R}}\mathbb{C}=R\times c^*(R)$
	where $c$ denotes complex conjugation.  Then on $R$-points $\nu_h$ is given by
	\[
	R^{\times}\hra R^{\times}\times c^*(R)^{\times}= (R\otimes_{\mathbb{R}}\mathbb{C})^{\times}=\mathbb{S}(R)\xrightarrow{h}\textbf{G}_{\mathbb{C}}(R),
	\]
	where the first inclusion is given by $ x\in R^{\times}\mapsto (x,1) $. Denote by $[\upmu]_{\mathbb{C}}$ the unique $\textbf{G}(\mathbb{C})$-conjugacy class in $\Hom_{\mathbb{C}}(\mathbb{G}_{m, \mathbb{C}},\textbf{G}_{\mathbb{C}})$ 
	which contains \textbf{the inverses} of all the $\nu_h$'s. Let $\mathsf{Z} = \textbf{Hom}_{\mathbb{Z}_{(p)}} (\mathbb{G}_{m,\mathbb{Z}_{(p)} }, \mathcal{G})$ be the fppf sheaf of cocharacters, and $ \mathsf{Ch} =\mathcal{G}\backslash\mathsf{Z} $ the fpqc quotient sheaf of $ \mathsf{Z} $ by the adjoint action of $ \mathcal{G} $. By \cite[Chap. XI, Cor. 4.2]{SGA3}, the sheaf $ \mathsf{Z}$ is represented by a smooth separated scheme over $ \mathbb{Z}_{(p)} $, and it is shown in \cite[2.2.2]{ChaoZhangEOStratification} that $ \mathsf{Ch} $  is represented by a disjoint union of connected finite  \'etale  schemes over $ \mathbb{Z}_{(p)} $. Moreover, it is shown in loc. cit. that the  $\mathbb{C}$-point of $ \mathsf{Ch} $ corresponding to the conjugacy class $[\upmu]_{\mathbb{C}}$ descends to a $W(\kap)$-point $ \mathsf{Ch}$. We call the resulting $\kap$-point of $\Ch$ \textbf{the reduction over $\kap$ of $[\upmu]_{\C}$} and denote it by $[\upmu]_{\kap}$. In fact the conjugacy class $[\upmu]_{\kap}$ admits a representative \begin{equation}\label{Def:mu}	\upmu:\mathbb{G}_{m,\kap}\to G_{\kap}. 	\end{equation}  We choose such a representative $ \upmu$ but note that it is the $ G(\kap) $-conjugacy class $[\upmu]_{\kap}$ that is canonically determined by the Shimura datum $(\mathbf{G}, \mathbf{X})$. We define a Frobenius twist of $\upmu$, 	\begin{equation}\label{Def:musig}	\sigmu :=\sig(\upmu ) : \mathbb{G}_{m, \kap}= \mathbb{G}_{m, \kap}^{\sig}\xrightarrow{\sig^*\upmu} G_{\kap}^{\sig}= G_{\kap},\end{equation}  															where  $\sig^*\upmu: \mathbb{G}_{m, \kap}^{\sig}\to G_{\kap}^{\sig}$ is the base change of $ \upmu $ along the absolute Frobenius  $\sig: \kap \to \kap$ of $\kap$. Here we suppress notations $\cancong$ and identify $ G_{\kap}^{\sig} $, resp. $ \mathbb{G}_{m, \kap} ^{\sig}$ with $G_{\kap}$, resp. $\mathbb{G}_{m,\kap}$ since the latter ones are defined over $ \mathbb{F}_p $. 
	
	Every element $h\in \textbf{X}$ defines a Hodge decomposition $V_{\mathbb{C}}=V^{(-1, 0)}\oplus V^{(0,-1)}$ via the embedding $\textbf{X}\hookrightarrow \textbf{S}^{\pm}$. By definition of $  \textbf{S}^{\pm} $,  $\nu_{h}(z)$ acts on $V^{(-1, 0)}$ through multiplication  by $z$ and on $V^{(0, -1)}$ as the identity. In particular, $ \nu_{h}$ is of weight $ 1$ and $ 0 $, and hence $ \upmu: \Gmka\to \Gka $  is of weight $ -1 $ and $ 0 $.
	%
	Since the scheme  $ \mathsf{Z} $ is smooth, there exists a  $\sG_{\wkap}$-valued cocharacter   $\tilde{\upmu}: \mathbb{G}_{m, W(\kap)} \to \mathcal{G}_{W(\kap)}$, which lifts $ \upmu $.   From now on we fix such a lift $ \mutil $, and define a Frobenius twist of $\mutil$  along the Frobenius  $ \sig: W(\kap) \to W(\kap)$ as follows
	\begin{equation} \label{Def:tilmu}
		\tilde{\upmu}^{\sig}:=\sig(\tilde{\upmu}):  \mathbb{G}_{m, W(\kap)}= \mathbb{G}_{m, W(\kap)}^{\sig}\xrightarrow{\sig^* \mutil}  \mathcal{G}_{W(\kap)}^{\sig}= \mathcal{G}_{W(\kap)},
	\end{equation}
	where we identify
	$ \mathcal{G}_{W(\kap)}^{\sig} $,  resp. $ \mathbb{G}_{m, W(\kap)}^{\sig}$ with
	$ \mathcal{G}_{W(\kap)} $, resp. $ \mathbb{G}_{m, W(\kap)}$ 
	since they are  defined over $ \mathbb{Z}_p $ already. Clearly $ \sigmu=\sig(\upmu) $ is the reduction modulo $ p $ of $\tilde{\upmu}^{\sig}=\sigmutil $. 
	The cocharacter $ \tilde{\upmu}$ induces  weight decomposition of $\Lambda_{W(\kap)}$ and $\Lmd^*_{W(\kap)}$
	\begin{equation}\label{Weighdecomp}
		\Lambda_{W(\kap)}=\Lambda_{W(\kap)}^0\oplus\Lambda_{W(\kap)}^{-1}, \ \ \ \ 
		\Lmd^*_{W(\kap)}=\Lmd^{*,0}_{W(\kap)}\oplus\Lmd^{*,1}_{W(\kap)}.
	\end{equation}  
\subsection{Some group-theoretic preparations}\label{S:Emdcoord} We first introduce some subgroup schemes of $\sG_{\wkap}$ that are induced by $\mutil$. 
	Denote by 
	$\sP_{+}=\sP_{\upmu}\sbt \sG_{\wkap} $ the scheme theoretic stabilizer of the filtration $\Lmd_{\wkap}\spt \Lmd_{\wkap}^{-1}$ (equivalently, of the filtration $ \Lmd^{*}_{\wkap}\spt \Lmd^{*,1}_{\wkap}  $ via dual representations). It is a parabolic subgroup scheme of $ \sG_{\wkap} $. Similarly we denote by $ \sP_{-}=\sP_{\upmu^{-1}}\sbt \sG_{\wkap} $ the opposite subgroup scheme of $ \sP_{+} $. Write  $\sU_{\pm}=\sU_{\pm}(\upmu)\sbt \sP_{\pm}$ for the  corresponding unipotent radicals, and $\sM=\sP_+\cap \sP_-$ for the common Levi subgroup scheme of $\sP_-$ and $\sP_+$. Note that $\sM$ is also the centralizer in $\sG_{\wkap}$ of  $\mutil$. 
	
	The next lemma will  become useful in later sections; it can also be seen easily using the embedding $\sG_{W(\kap)}\hookrightarrow \GL_{2\rg, W(\kap)}$ to be discussed right after this lemma.
	
	\begin{lem}\label{Lem:integral}
		Let $ A $ be a flat $W(\kap)$-algebra such that $ \bar A:=A/pA\neq 0 $. Then we have  
		\[
		\mutil(p) \sP_{+}(A)\mutil(p)^{-1}\sbt \sG(A), \ \ \  \mutil(p) \sU_{+}(A)\mutil(p)^{-1}\sbt \mathtt{K}_1(\sG)(A),
		\]
		with $\mathtt{K}_1 (\sG)(A):= \{g\in \sG(A)| \overline{g}=1\in G(\bar A)\}$,
		where $ \bar g$ denotes  the image of $g$ in $G(\bar A)$ under the canonical reduction map  $\sG(A)\to G(\bar A)$.
	\end{lem}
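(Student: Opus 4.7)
The plan is to compute directly using the embedding $\sG_{W(\kap)} \hookrightarrow \GL(\Lmd^{*}_{W(\kap)})$ (equivalently, $\GL_{2\rg, W(\kap)}$), exactly as the hint after the lemma suggests. The cocharacter $\mutil$ induces the weight decomposition $\Lmd^{*}_{W(\kap)} = \Lmd^{*,0}_{W(\kap)} \oplus \Lmd^{*,1}_{W(\kap)}$, with $\mutil(t)$ acting as $t^{0}$ and $t^{1}$ respectively. Writing matrices in block form with respect to this decomposition (weight $0$ upper-left, weight $1$ lower-right), we have $\mutil(p) = \bigl(\begin{smallmatrix} 1 & 0 \\ 0 & p\end{smallmatrix}\bigr)$. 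Since $\sP_{+}$ is the stabilizer of $\Lmd^{*}_{W(\kap)} \supset \Lmd^{*,1}_{W(\kap)}$, an element $g \in \sP_{+}(A)$ has the block lower-triangular form $g = \bigl(\begin{smallmatrix} a & 0 \\ c & d \end{smallmatrix}\bigr)$, while an element $u \in \sU_{+}(A)$ has the form $u = \bigl(\begin{smallmatrix} 1 & 0 \\ c & 1\end{smallmatrix}\bigr)$ (the unipotent radical comprises the weight $-1$ part under adjoint action).

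A one-line matrix multiplication then gives
\[
\mutil(p)\, g\, \mutil(p)^{-1} = \begin{pmatrix} a & 0 \\ pc & d \end{pmatrix}, \qquad
\mutil(p)\, u\, \mutil(p)^{-1} = \begin{pmatrix} 1 & 0 \\ pc & 1 \end{pmatrix}.
\]
In particular both matrices, as well as their inverses, have entries in $A$, so they lie in $\GL(\Lmd^{*}_{A})$. Moreover the second matrix is congruent to the identity modulo $p$ since $pc \in p \cdot \End(\Lmd^{*}_{A})$.

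To promote ``in $\GL(\Lmd^{*}_{A})$'' to ``in $\sG(A)$'', I will use the standard fact that $\sG \hookrightarrow \GL(\Lmd^{*})$ is a closed immersion cut out by the tensor-fixing conditions $g \cdot s_{\alpha} = s_{\alpha}$. Since $A$ is flat over $W(\kap)$ the canonical map $A \hookrightarrow A[1/p]$ is injective, and it follows formally that
\[
\sG(A) \;=\; \sG(A[1/p]) \,\cap\, \GL(\Lmd^{*}_{A})
\]
as subsets of $\GL(\Lmd^{*}_{A[1/p]})$. Because $\mutil$ factors through $\sG_{W(\kap)}$, we have $\mutil(p) \in \sG(W(\kap)[1/p])$; hence the conjugations $\mutil(p)\, g\, \mutil(p)^{-1}$ and $\mutil(p)\, u\, \mutil(p)^{-1}$ automatically lie in $\sG(A[1/p])$. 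Combined with the integrality from the matrix computation, the intersection identity places them in $\sG(A)$, which proves the first inclusion. The reduction $\bar{\cdot}\colon \sG(A) \to G(\bar{A})$ sends $\bigl(\begin{smallmatrix}1 & 0 \\ pc & 1\end{smallmatrix}\bigr)$ to the identity, yielding the second inclusion $\mutil(p)\,\sU_{+}(A)\,\mutil(p)^{-1} \subset \mathtt{K}_{1}(\sG)(A)$.

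Everything here is quite mechanical; the only step requiring care is the third one, where one must know that the closed embedding $\sG \hookrightarrow \GL(\Lmd^{*})$ is defined by tensor equations (this is the very definition of $\sG$ in \S\ref{S:cochar}) and that the flatness hypothesis on $A$ cannot be dropped there. No relative/stack-theoretic machinery is needed; in particular neither the Dieudonné-theoretic input of \S\ref{S:pdivisible} nor the Shimura variety itself enters into the proof.
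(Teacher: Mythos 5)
Your proof is correct and is the \emph{other} of the two arguments the paper has in mind: right before stating the lemma the text remarks that it ``can also be seen easily using the embedding $\sG_{W(\kap)}\hookrightarrow \GL_{2\rg, W(\kap)}$,'' which is exactly your route, whereas the paper's written proof is intrinsic, invoking the dynamic description of $\sP_+$ and $\sU_+$: an element $g\in\sP_+(A)$ is characterized by $t\mapsto\mutil(t)g\mutil(t)^{-1}$ extending from $\mathbb{G}_{m,A}$ to a morphism $F_{\mutil,g}\colon\mathbb{G}_{a,A}\to\sG_A$, so one just evaluates $F_{\mutil,g}$ at $p\in\mathbb{G}_a(A)$ to land in $\sG(A)$ directly, and functoriality of $F_{\mutil,g}$ along $A\to\bar A$ gives the congruence for $\sU_+$. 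The dynamic argument is cleaner because it never leaves $\sG$ and needs no choice of representation, nor the ``$\sG(A)=\sG(A[1/p])\cap\GL(\Lmd^*_A)$'' intersection step (though that step is correct given flatness, and the paper itself uses the same intersection identity in Lemma~\ref{IntegralLem}). Your approach is more concrete and arguably more self-contained, at the cost of going through the auxiliary $\GL(\Lmd^*)$. One small slip: in your ordering (weight~$0$ first, weight~$1$ second, so $\mutil(p)=\mathrm{diag}(1,p)$), the block-lower-triangular nilpotent $E_{21}$ has adjoint weight $+1$, not $-1$ as your parenthetical says; $\Lie(\sU_+)$ is the \emph{positive}-weight part. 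This is only a mislabel---the matrix forms you wrote for $\sP_+(A)$ and $\sU_+(A)$, and hence the computations, are correct.
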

	
	\begin{proof}
		Recall the dynamic descriptions of $ \sP_+ $ and $ \sU_+ $ (see for example \cite[2.1]{PseudoReductiveGroups}):
		\begin{align}
			\begin{array}{l}
				\sP_{+}(A)=\{g\in \sG(A)\ \ | \  \ \lim\limits_{t\to 0}\mutil(t)g\mutil(t)^{-1}\  \text{exists}\},\\
				\sU_{+}(A)=\{g\in \sP^+(A)\ | \  \ \lim\limits_{t\to 0}\mutil(t)g\mutil(t)^{-1}=1 \},
			\end{array}
		\end{align}
		where the condition  $\lim\limits_{t\to 0}\mutil(t)g\mutil(t)^{-1}$ exists means that the homomorphism of $ A $-group schemes $ f_{\mutil, g}: \mathbb{G}_{m,A}\to \sG_{A}, \ t\mapsto  \mutil(t)g\mutil(t) ^{-1},$ extends to a morphism of $ A $- schemes $F_{\mutil, g}: \mathbb{G}_{a, A}\to \sG_{A}  $, while the condition $\lim\limits_{t\to 0}\mutil(t)g\mutil(t)^{-1}=1$ requires  $ F_{\mutil, g}(0) =1\in \sG(A)$.

		Now let $ g\in \sP_+(A)  $. Since $ p\in \mathbb{G}_{m}(A[\frac{1}{p}]) \cap \mathbb{G}_{a}(A)$ ($A$ is $p$-torsion free),  one finds that 
		\[
		\mutil(p)g\mutil(p)^{-1}=f_{g, \mutil}(p)=F_{\mutil, g}(p)\in G(A).
		\]
		If moreover, $ g\in\sU_+(A), $ the functoriality of $F_{\mutil, g}$ for the canonical projection $A\to \bar A$, viewed as a map between $W(\kap)$-algebras, implies
		\[
		\overline{\mutil(p)g\mutil(p)^{-1} }= \overline{F_{\mutil, g}(p)}=1.
		\]
		
	\end{proof}

    For later applications, we fix an embedding of $\sG_{W(\kap)}$ into $\GL_{2\rg, W(\kap)}$ as follows. Choose a $ W(\kap) $-basis 
	\[
	v_1, \cdots, v_{\rg},  v_{\rg+1}, \cdots, v_{2\rg} \in \Lmd^{*}_{\wkap}
	\]
	such that the first $\rg$ elements above lies in $ \Lmd^{*,1}_{\wkap} $ and the remaining ones lie in $ \Lmd_{\wkap}^{*, 0} $. Then by sending an element $ h \in \GL(\Lams_{\wkap})$ to the matrix $ X_h \in \GL_{2\rg, \wkap}$ such that 							\[ h(v_1, \cdots, v_{2\rg})=(v_1, \cdots, v_{2\rg})X_h, \]
	 we obtain an isomorphism of $ \wkap $-group schemes between $\GL(\Lmd^*_{\wkap})$ and $\GL_{2\rg, \wkap}$. Hence from \eqref{EmbedGrp/Z(p)} we obtain an embedding of $\sG_{W(\kap)}$ into $\GL_{2\rg, W(\kap)}$, as $ \wkap $-reductive group schemes,
	\begin{align}\label{keyembedding}
		\iota: \sG_{\wkap} \hra \GSp(\Lmd, \psi)_{\wkap}\hra\GL(\Lmd^*_{\wkap})\cong\GL_{2\rg, \wkap},
	\end{align}
	and accordingly a cocharacter  $\mutil':=\iota \circ \mutil$ of $ \GL_{2\rg, \wkap}$.
	For every  $ \wkap $- algebra $ R$ such that  $p\in R^{\times}$, we have
	\begin{equation}
		\mutil'(p)=\big(\begin{array}{cc}
			p\rm{I}_{\rg}&\\
			& \rm{I}_{\rg}
		\end{array}
		\big)\in \GL_{2\rg}(R).   
	\end{equation}
      We denote by $ \sP'_{\pm},  \sU'_{\pm},  \sM' $ the counterparts of  $\sP_{\pm}, \sU_{\pm}, \sM$ respectively for the cocharacter $ \tilde{\upmu}'$ of $\GL_{2\rg, \wkap} $. Clearly these subgroups can be described explicitly in term of matrices; for example  $ \sU'_-\sbt  \sG'_{W(\kap)}  $ consists of matrices of the form $\Big(\begin{array}{cc}	\rm{I}_{\rg}&\\	*& \rm{I}_{\rg}	\end{array}\Big),$ where $ * $ denotes a $ \rg$ by $\rg $-matrix.  It is a general fact that we have (see \cite[4.1.10]{ConradReductiveGroupSchemes} for example)	\begin{equation}\label{Intersect}	\sP_{\pm}=\sP'_{\pm}\cap \sG,\ \ \  \sU_{\pm}=\sU'_{\pm}\cap \sG,  \ \ \  \sM=\sM'\cap \sG. \end{equation}
	
	We shall see that  the embedding $ \iota $ in (\ref{keyembedding}) will enable us to reduce some group-theoretic arguments in later sections to much easier problems like multiplying $2$ by $2$ block matrices.

	  \subsection{Tensors on $\HH_{\dR}^1(\sA/\sS)$}\label{S:tensors}
	For all $i\geq 0$, write $\HH^i_{\dR}(\mathcal{A}/\mathcal{S}):=\textbf{R}^i\pi_* (\Omega^{\bullet}_{\sA/\sS})$ for the $i$-th relative de Rham cohomology of $\sA$ over $\sS$, where $\pi: \sA\to \sS$ is the structure morphism. As is shown in \cite[2.5.2]{BBM} (generalizing the well-known case where the base is a field to the case where the base is an arbitrary scheme), for all $i\geq 0$ (resp. all $r, s\geq 0$), the $\sO_{\sS}$-module $\HH^i_{\dR}(\mathcal{A}/\mathcal{S})$ (resp. 
	$ \textbf{R}^s\pi_* (\Omega^{r}_{\sA/\sS})$) are locally free and their formations commute with arbitrary base change. Moreover, the Hodge-de Rham spectral sequence 
	\[
	{}_{\rm H}\mathrm{E}^{r,s}=\textbf{R}^s\pi_* (\Omega^{r}_{\sA/\sS}) \implies \HH^{r+s}_{\dR}(\sA/\sS),  
	\]
	 degenerates at $\rm E_1$-page. In particular, we have an exact sequence of locally free $\sO_{\sS}$-modules 
	\begin{equation}\label{GenerHodgFil}
		0\to \omega_{\sA/\sS}\to  \HH^1_{\dR}(\mathcal{A}/\mathcal{S})\to \textbf{R}^1\pi_* \sO_{\sS}\to 0,  
	\end{equation}
	where the Hodge filtration $\omega_{\sA/\sS}= \pi_*\Omega^1_{\sA/\sS}$ is of rank $\rg$ and  $\HH^1_{\dR}(\mathcal{A}/\mathcal{S})$ is of rank $2\rg$. 
	
        For typographical reason, in this and the next subsections we write $\sV_{\dR}$ for $\HH^1_{\dR}(\mathcal{A}/\mathcal{S})$. 
	    Below we explain the so-called ``(integral) de Rham tensors" on $\sV_{\dR}.$ We will need these tensors to define interesting torsors over $\sS$ in \S \ref{Torsors}. 
	    
	     The $\q$-representation $V$ of $\mathbf{G}$ coming from the embedding $\mathbf{G}\hookrightarrow \GSp(V, \psi)$ gives rise to a $\q$-local system $\sV_{B, \q}=\RI\pi^{\an}_* \q$ on $\shkcan$. Below we  first explain how the tensors $(s_{\alpha})_{\alpha}\sbt V^{\otimes}$ that cut out $\mathbf{G}$ inside $\GL(V^*)$ induce global sections on $\sV_B^{\otimes}$;  cf. \cite[2.2]{KisinIntegralModels} and \cite[2.3]{CarainiScholzeGenericCompact}.  Write: \[\tilde{\shk}=X\times \mathbf{G}(\Afp)/\K,\ \ \  \tilde{\Sh_{\K'}}=S^{\pm}\times \GSp(\Afp)/\K'.\] Then the canonical projection $\tilde{\Sh_{\K}}\to \Sh_{\K, \C}^{\an}$ (resp. $\tilde{\Sh_{\K'}}\to \Sh_{\K', \C,\an}$) makes $\tilde{\shk}$ a $\mathbf{G}(\q)$-torsor over $\shkcan$ (resp. $\tilde{\Sh_{\K'}}$ a $\GSp(\q)$-torsor over $\Sh_{\K', \C}^{\an}$). To make distinctions, we write $\sA'$ for the universal (analytic) abelian variety over $\Sh_{\K', \C}^{\an}$ with $\pi': \sA'\to \Sh_{\K', \C}^{\an}$ the structure map. Then we know that the isogeny class of $\sA'$  corresponds to the dual of the $\q$-local system $\RI\pi_*'\q$ (viewed as a variation of Hodge structure over  $\Sh_{\K', \C}^{\an}$), which in turn corresponds to the constant $\q$-local system $V$ over the cover $\tilde{\Sh_{\K'}}$, together with the structure morphism $\GSp(\q)\to \GL(V)$. Clearly we have the following commutative diagram \[\xymatrix{\tilde{\shk}\ar[d]\ar[r]&\tilde{\Sh_{\K'}}\ar[d]\\	\shkcan\ar[r]&\Sh_{\K', \C}^{\an},}\] where the top horizontal map is equivariant w.r.t. the group homomorphism $\mathbf{G}(\q)\to \GSp(\q)$. Hence the variation of Hodge structure $\sV_{B,\q}$ over $\shkcan$, that corresponds to the isogeny class of $\sA$, also corresponds to the constant $\q$-local system $V^*$ over the cover $\tilde{\shk}$, together with the representation $\mathbf{G}(\q)\hookrightarrow \GSp(\q)\hookrightarrow \GL(V)\cancong \GL(V^*)$. Now it is clear that the set of tensors $(s_{\alpha})_{\alpha}\sbt V^{\otimes}$ gives rise to a set of global sections (we simply call them \textbf{Betti-tensors}),	\begin{equation}\label{BetTensor}	(s_{\alpha, B})_{\alpha}\sbt \Gamma\big(\shkcan, \sV_{B, \q}^{\otimes}\big).
		\end{equation}
	   
	    By the Riemann-Hilbert correspondence \cite{DeligneDiffEq} we have the following equivalences of tensor categories, \[\mathrm{Loc}_{\C}(\shkcan)\xrightarrow[\cong]{(\cdot)\otimes\sO_{\shkcan}}\mathrm{VBIC}(\shkcan)\xleftarrow[\cong]{(\cdot)^{\an}} \mathrm{VBIC}(\shkc)^{\reg},\] 
	where $\mathrm{Loc}_{\C}(\shkcan)$ denotes the tensor category of $\C$-local systems over $\shkan$, $\mathrm{VBIC}(\shkcan)$ (resp. $ \mathrm{VBIC}(\shkc)^{\reg}$) denotes the tensor category of holomorphic (resp. algebraic) vector bundles with integrable connections (resp. with integrable connections, with regular singularities at infinity). Under these category equivalences, the $\C$-local system $\sV_{B, \C}:=\sV_{B, \q}\otimes \C$ corresponds to the vector bundle $\sV_{\dR, \C}:=\HH^1_{\dR}(\mathcal{A}/\shkc)$ over $\shkc$ and we have a parallel isomorphism of analytic vector bundles over $\shkan$, \[\upepsilon:\sV_{B, \C}\otimes\sO_{\shkcan}\cong \sV_{\dR,\C}^{\an},\]	where the left hand side is equipped with trivial connections. Hence by transport of structure, we obtain from the Betti tensors \eqref{BetTensor} our desired horizontal global sections (call them \textbf{de Rham tensors}), \begin{equation}\label{deRhamtensor/C} (s_{\alpha, \dR})_{\alpha}\sbt \Gamma\big(\shkc, \sV_{\dR,\C}^{\otimes}\big),
	\end{equation} 
such that $\upepsilon(s_{\alpha, B})=s_{\alpha, \dR}^{\an},$ with $s_{\alpha, \dR}^{\an}\in \Gamma(\shkcan, (\sV_{\dR,\C}^{\an})^{\otimes})$ understood. Here note that although $\sV_{B, \C}^{\otimes}$ does not live inside $\mathrm{Loc}_{\C}(\shkcan)$, each Betti tensor $s_{\alpha, B}$ lies in some direct summand of $\sV_{B, \C}^{\otimes}$ which does live inside $\mathrm{Loc}_{\C}(\shkcan)$. 

	\begin{prop}[{\cite[2.2.1, 2.3.9]{KisinIntegralModels}}]\label{dRTensorGlob}	Each of the de Rham tensors $s_{\alpha, \dR}$ in \eqref{deRhamtensor/C} descends to $\sO_{E, (v)}$; i.e., there exist (necessarily unique) horizontal global sections	\[(s_{\alpha, \dR})_{\alpha}\sbt \Gamma\big(\sS, \sV_{\dR}^{\otimes}\big),\] 	whose restriction on $\shkc$ are the tensors in \eqref{deRhamtensor/C}.	          \end{prop}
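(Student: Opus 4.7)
The plan is to prove the proposition in two stages: first descend the de Rham tensors from $\shkc$ to the generic fibre $\shk$ over $E$, and then extend them across from $\shk$ to the integral model $\sS$ over $\sO_{E,(v)}$. Throughout, the fact that the sections are horizontal with respect to the Gauss--Manin connection will be crucial, since a horizontal section is determined by its value at any point in a connected component; hence once the tensors exist as sections of the appropriate locally free sheaves, uniqueness is free and one only needs existence.

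For the first stage, the descent from $\C$ to $E$, I would argue as follows. Fibrewise at each complex point $x \in \shkc$, the tensor $s_{\alpha, \dR, x} \in \HH^1_{\dR}(\sA_x/\C)^{\otimes}$ is obtained from the Hodge class $s_{\alpha, B, x}\in \HH^1(\sA_x^{\an}, \q)^{\otimes}$ via the comparison $\upepsilon$, so it is a Hodge class on the abelian variety $\sA_x$. By Deligne's theorem that Hodge cycles on abelian varieties are absolutely Hodge, these classes are invariant under $\Aut(\C/E)$ after transport via any embedding $\C \hookrightarrow \C$ (once we know that the construction commutes with the action of Galois, which is encoded in Deligne's axiomatic definition of the canonical model over $E$). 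Together with horizontality, this forces $s_{\alpha, \dR}$ to descend to an algebraic horizontal section over $\shk$. Alternatively (and more in the spirit of Kisin's argument), one can package all the tensors into the statement that they cut out the $\mathbf{G}$-torsor $\underline{\Isom}\bigl((\Lmd^*, s_\alpha), (\sV_{\dR}, s_{\alpha, \dR})\bigr)$ on $\shkc$, and the existence of such a $\mathbf{G}$-torsor (equivalently, a reduction of structure group) over $E$ follows from the fact that the Shimura variety $\shk$ is the canonical model and $\mathbf{G}$ is defined over $\q$.

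For the second stage, extension from $\shk$ to $\sS$, the idea is to use that $\sS$ is smooth, hence normal, over $\sO_{E,(v)}$, and $\sV_{\dR}^{\otimes}$ is locally free, so a section defined on an open subset whose complement has codimension $\geq 2$ extends automatically. The only codimension one points of $\sS$ not in $\shk$ are the generic points $\upeta_i$ of the special fibre $S$. At such a point the completed local ring is a complete discrete valuation ring with generic characteristic $0$ and residue characteristic $p$; one must check the $s_{\alpha, \dR}$, viewed via the generic fibre, lie in the integral lattice $\HH^1_{\dR}(\sA/\sO_{\sS,\upeta_i}^{\wedge})^{\otimes}$. This is where one invokes Faltings's $p$-adic comparison isomorphism between $p$-adic étale cohomology of $\sA_{\bar\eta}$ (where the étale tensors $s_{\alpha,\et}$ coming from $\Lmd^*$ live naturally) and crystalline cohomology, together with Berthelot--Ogus's comparison between crystalline and de Rham cohomology for an abelian scheme with good reduction; the integrality of $s_{\alpha,\et}$ in the $\Zp$-lattice $T_p\sA_{\bar\eta}$ then translates to integrality of $s_{\alpha,\dR}$ in $\HH^1_{\dR}(\sA/\sO_{\sS,\upeta_i}^{\wedge})^{\otimes}$. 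Combining with the codimension-two extension, the tensors extend to $\sS$; horizontality is preserved because it is a closed condition.

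The main obstacle I expect is the integrality check at codimension one points of the special fibre. Deligne's absolute Hodge theorem is strong enough to descend the tensors generically, but guaranteeing that they remain integral at places above $p$ is not formal: it is precisely the arithmetic content that forces the use of the $p$-adic comparison isomorphism and of Faltings's theorem relating integral étale lattices to integral crystalline/de Rham structures. In particular, one cannot avoid the hyperspecial (good reduction) assumption on $\mathsf{K}_p$ at this step, because this is what ensures that the $\Zp$-lattice $\Lmd_{\Zp}^*$ is preserved by $\mathsf{K}_p = \sG(\Zp)$ and hence matches the integral structure on $T_p\sA_{\bar\eta}$ fibrewise.
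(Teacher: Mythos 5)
Your two-stage plan -- descend to $E$, then extend to $\sS$ -- correctly captures the high-level structure of Kisin's argument, and the first stage (absolute Hodge cycles, \`a la Deligne, forcing the Betti tensors to descend to the canonical model over the reflex field) is indeed what \cite[2.2.1]{KisinIntegralModels} does; including the reformulation via reduction of structure group of the $\Isom$-torsor.

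The second stage, however, does not match the actual mechanism in \cite[\S 2.3]{KisinIntegralModels}, and as written has a gap. You propose to check integrality at the \emph{generic} points $\upeta_i$ of the special fibre $S$, invoking Faltings's $p$-adic comparison over $\widehat{\sO}_{\sS,\upeta_i}$. But the residue field $k(\upeta_i)$ is the function field of an irreducible component of $S$, which is imperfect and nowhere near finite; the integral crystalline/\'etale comparison in the form you invoke is not available in that generality, and at any rate is not what Kisin uses. What Kisin actually does is work at the \emph{closed} points $s$ of $S$, where $\widehat{\sO}_{\sS,s}\cong W(k(s))[[t_1,\ldots,t_d]]$ with $k(s)$ finite. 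There his own integral $p$-adic Hodge theory from \cite[\S 1]{KisinIntegralModels} (Breuil--Kisin modules, $(\varphi,\hat{\sG})$-modules) shows the \'etale tensors map to Frobenius-stable \emph{integral} tensors in the Dieudonn\'e module, and the de Rham tensors descend from $\widehat{\sO}_{\sS,s}$ to $\sO_{\sS,s}$ by faithfully flat descent. Globalization then uses that closed points are dense in $S$: the ``denominator'' locus of $s_{\alpha,\dR}$ would be a nonempty closed subset of $\sS$ containing no closed point of $S$, which is impossible. This replaces the Hartogs-at-codimension-one step entirely, and is where the heavy hypotheses (hyperspecial level, hence an integral $\sG$, hence the \'etale lattice matches $T_p\sA$) enter -- you correctly flag that dependence, but attribute the input to Faltings rather than to Kisin's \S 1, and route the argument through the wrong points of $\sS$.
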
	We call the global sections $s_{\alpha, \dR}$ obtained here \textbf{(integral) de Rham tensors}. 
	
	\subsection{Crystalline nature of integral de Rham tensors}\label{CrynatrofdRTensor}
	In this subsection we make clear of a (perhaps well-known) consequence of  Prop. \ref{dRTensorGlob} concerning the property of  integral tensors $s_{\alpha, \dR}$ being horizontal, as it will be needed later.
	
	 Let $R$ be a $p$-complete flat $\wkap$-algebra and $x, y: \Spec R\to \sS$ morphisms of $\wkap$-schemes which are congruent modulo $p$. Write $ \sV_{\dR, x}, s_{\alpha, \dR, x}$ the pull backs to $R$ along $x$ of $\sV_{\dR}, s_{\alpha ,\dR}$ respectively; similarly for $\sV_{\dR, y}, s_{\alpha, \dR, y}$.  It is a well-known fact that the vector bundle $\sV_{\dR}$ on $\sS$ has an $F$-crystal structure in the sense of \cite{KatzTravauxDwork} and the Gauss-Manin connection on $\sV_{\dR}$ provides a canonical isomorphism of $R$-modules (see for example (1.2) of loc. cit.), $\epsilon(x,y):  \sV_{ \dR, x}\cong\sV_{ \dR, y} $. Since $s_{\alpha, \dR}$ is horizontal,  we have \[\epsilon(x,y)\big(s_{\alpha, \dR,x}\big)=s_{\alpha, \dR,y}.\]
     \subsection{Torsors over  Shimura varieties}\label{Torsors}
     For simplicity, from now on we write $s$ instead of $(s_{\alpha})_{\alpha}$; similarly we simply write $s_{\dR}$ instead of $(s_{\alpha, \dR})_{\alpha}$. For a $\wkap$-morphism $x: \Spec R\to \sS$, we also write  $s_{\dR, R}$ for $s_{\dR, x}$ (i.e., the pull-back of $s_{\dR}$ along $x$), if the structure morphism $x$ is understood.   However, in order to keep notations suggestive, we still write $\HH_{\dR}^1(\sA/\sS)$ instead of $\sV_{\dR}$.  Now we are ready to define two $\sS$-schemes below, which will play important roles later,
	\begin{align*}
		&\mathbb{I}:=\Isom_{\sO_{\mathcal{S}}}\big([\Lmd^*_{W(\kap)}, s_{W(\kap)}]\otimes \sO_{\mathcal{S}},\  \  [ \HH^1_{\dR}(\mathcal{A}/\mathcal{S}), s_{\dR}]\big),\\
		&\mathbb{I}_+:=\Isom_{\sO_{\mathcal{S}}}\big([\Lmd^*_{W(\kap)}\spt \Lmd^{*,1}_{W(\kap)}, s_{W(\kap)} ]\otimes \sO_{\mathcal{S}}, \ \ [\HH^1_{\dR}(\mathcal{A}/\mathcal{S})\spt \omega_{\sA/\sS},  s_{\dR}]\big).
	\end{align*}
    Unwinding definition: for every $W(\kap)$-algebra $R$, a point $\xflat\in \II_+(R)$ consists of a pair $(x, \betx)$, where $x\in \sS(R)$ corresponds to a morphism $x: \Spec R\to \sS$ of $W(\kap)$-schemes, and 
	\[
	\betx: \big(\Lams_R\spt \LamsI_R\big)\cong \big(\HH^1_{\dR}(\sA_x/R)\spt \omega_{x}\big)
	\]
	is an isomorphism of $R$-modules, which maps $s_R$ to $s_{\dR, R}$ termwise. Here following our notational convention we denote $\sA_x, \omega_x, s_{\dR, R}$ the pull back to $R$ along $x$ of $\sA$,  $\omega_{\sA/\sS}, s_{\dR}$ respectively.
	We have similar descriptions for points $x\in \II(R)$ by omitting filtrations in $\betx$. 
	
	Clearly $\sG$ resp. $\sP_+$ naturally acts on $\II$ resp. $\II_+$ on the right, freely and transitively.  To be precise, the action of a section $ h\in \sG(R) $ (resp.  $h\in \sP_+(R) $) on $ \II(R) $ (resp. on $\II_+(R)$) is given by 
	\[
	\xflat\cdot h=(x,\betx)\cdot h= (x, \betx h).
	\]
	
	\begin{lem} \label{Torsor/IntS}
		The scheme $\II_+$ (resp. $ \II $) is a $\sP_+$-torsor (resp. $\sG$-torsor) over $\sS$. 
	\end{lem}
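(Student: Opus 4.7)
The plan is to verify that $\II$ and $\II_+$ are pseudo-torsors under $\sG$ and $\sP_+$ respectively, and that both morphisms to $\sS$ have non-empty geometric fibers everywhere; combined with the smoothness of $\sG$ and $\sP_+$, these two facts will suffice to conclude the torsor property. For the pseudo-torsor structure: given two $R$-points $\betx, \betx' \in \II(R)$ lying over the same $x \in \sS(R)$, the composition $\betx^{-1}\circ \betx'$ is an $R$-linear automorphism of $\Lams_R$ fixing the tensors $s_R$, hence lies in $\sG(R)$ by the tensor-stabilizer description of $\sG$ from \eqref{EmbedGrp/Z(p)}. If both lie in $\II_+(R)$, this composition also preserves the filtration $\Lams_R \supseteq \LamsI_R$, placing it in $\sG(R)\cap \sP'_+(R) = \sP_+(R)$ by \eqref{Intersect}. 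Together with freeness of the ambient $\GL(\Lams)$-action, this shows the canonical maps $\sG\times_{\wkap}\II \to \II\times_\sS\II$ and $\sP_+\times_{\wkap}\II_+ \to \II_+\times_\sS\II_+$ are isomorphisms.

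The heart of the proof is non-emptiness of every geometric fiber, handled in two cases. For a geometric point $\bar x$ of the generic fiber $\shk$: by the construction of Betti tensors recalled in \S\ref{S:tensors}, the $\q$-local system $\sV_{B,\q}$ becomes constant, isomorphic to $V^*$ with tensors $s$, after pulling back to the cover $\tilde{\shk}\to \shkcan$; combined with the Riemann--Hilbert isomorphism $\upepsilon$ identifying Betti tensors with analytic de Rham tensors, this yields non-emptiness of $\II_{\bar x}$ after descent from $\C$. For $\II_+$, the Hodge filtration $\omega_{\sA_{\bar x}} \subset \HH^1_{\dR}(\sA_{\bar x})$ corresponds under $\upepsilon$ to the filtration induced by some $h \in \textbf{X}$; since $\upmu$ was chosen in the $\mathbf{G}(\C)$-conjugacy class of the inverses of the $\nu_h$'s, the trivialization can be adjusted by an element of $\sG(\C)$ to match $\LamsI$ with the Hodge filtration. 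For a geometric point $\bar x : \Spec k \to S$ of the special fiber: I would lift $\bar x$ to $\tilde x : \Spec W(k) \to \sS$ by formal smoothness of $\sS$ over $\wkap$, apply the previous case to the generic fiber $\tilde x_\eta$, and then invoke that $\sG_{W(k)}$-torsors over the strictly henselian base $W(k)$ are trivial (Lang--Steinberg, since $\sG$ is reductive) to promote the generic-fiber trivialization to one defined over $W(k)$; reducing mod $p$ yields an element of $\II_{\bar x}(k)$. The filtration adjustment for $\II_+$ extends to the integral level using that $\sG/\sP_+$ is the smooth flag scheme parametrizing filtrations of type $\upmu$.

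Finally, once non-emptiness of every geometric fiber is established, the pseudo-torsor structure together with smoothness of $\sG$ and $\sP_+$ over $\wkap$ yields the torsor property: the base change $\II\times_\sS \II \cong \sG \times_\wkap \II$ is smooth over $\II$ via the second projection, so $\II\to \sS$ is smooth by fpqc descent, and surjectivity on geometric points plus smoothness gives faithful flatness, making $\II$ an \'etale $\sG$-torsor over $\sS$; the same argument applies to $\II_+$ under $\sP_+$. The main obstacle I anticipate is the special-fibre step of the non-emptiness argument: promoting a generic-fibre trivialization to an integral one relies essentially on both the horizontality and integrality of $s_{\dR}$ from Prop.~\ref{dRTensorGlob} (ensuring that $\II_{\tilde x}$ is genuinely a $\sG_{W(k)}$-torsor on all of $\Spec W(k)$, not merely on its generic fibre) and the reductive nature of $\sG$ (so that such torsors over strictly henselian bases are automatically trivial); the analogous integral filtration adjustment for $\II_+$ depends on the same integrality input to place the Hodge filtration in the correct $\sG(W(k))$-orbit.
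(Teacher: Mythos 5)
There is a genuine gap in the special-fibre step of your non-emptiness argument, and it sits precisely where you flag the ``main obstacle.'' You propose to lift $\bar x$ to $\tilde x:\Spec W(k)\to\sS$, trivialize $\II_{\tilde x}$ over the generic fibre, and then ``invoke that $\sG_{W(k)}$-torsors over $W(k)$ are trivial'' to promote the trivialization to $W(k)$. But to apply that triviality statement, one must already know that $\II_{\tilde x}\to\Spec W(k)$ is a torsor (i.e.\ faithfully flat, hence surjective over the closed point) -- and that is exactly what is being proved. Your parenthetical claim that the integrality of $s_{\dR}$ from Prop.~\ref{dRTensorGlob} ``ensures that $\II_{\tilde x}$ is genuinely a $\sG_{W(k)}$-torsor on all of $\Spec W(k)$'' is not correct: integrality only says $s_{\dR,\tilde x}$ lies in $M^{\otimes}$ rather than $M[1/p]^{\otimes}$, which does not by itself yield an isomorphism $(\Lams_{W(k)},s)\cong(M,s_{\dR,\tilde x})$ of lattices-with-tensors. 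The existence of such an integral isomorphism is a deep theorem of Kisin (\cite[Cor.~1.4.3]{KisinIntegralModels} and its refinements), proved using the Breuil--Kisin classification of crystalline lattices, and this is what the paper invokes through \cite[Lem.~2.3.2,~2)]{ChaoZhangEOStratification}. Lang--Steinberg cannot replace this input; it enters, if at all, only after the torsor property is known.

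There is a secondary weakness at the end: you argue that $\II\times_\sS\II\cong\sG\times_{\wkap}\II$ is smooth over $\II$ via $\mathrm{pr}_2$ and conclude $\II\to\sS$ is smooth ``by fpqc descent.'' Descent of smoothness along $\II\to\sS$ requires $\II\to\sS$ to already be an fpqc cover (faithfully flat and quasi-compact), which is again the thing being proved; surjectivity on geometric points plus the pseudo-torsor structure does not by itself give flatness. The paper sidesteps this by localizing at the completed local rings $\hat\sO_{\sS,s}$ of closed points, where the Kisin/Zhang result directly yields a section -- hence triviality, hence flatness -- over a cover of $\sS$. You should do the same: the correct organization is pseudo-torsor $+$ section over each $\hat\sO_{\sS,s}$ (from Kisin's result on the special fibre and from the Betti/Riemann--Hilbert comparison on the generic fibre, which you describe correctly) $\Rightarrow$ faithful flatness $\Rightarrow$ torsor. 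Your pseudo-torsor verification and the generic-fibre non-emptiness argument are both sound, and the reduction of $\II$ to $\II_+$ (or vice versa via pushforward along $\sP_+\hookrightarrow\sG$, as the paper does) is also fine.
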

	\begin{proof} We only show here the assertion for $\II_+$ as the assertion for $\II$ can be shown in the same way; or maybe better, it follows from the fact that $\II$ is the push-forward of $\II_+$ along the homomorphism $\sP_+\hookrightarrow \sG$. 
		 
	Since $\II_+$ is an $\sS$-scheme of finite presentation and the action of $\sP_+$ on \IIplus\  is free and transitive,  it suffices to show that $\IIplus$ is faithfully flat over~$\sS$. In other words, we need to show that for each closed point $ s $ of $ \sS $, the pullback of $\II_+$ to $\Spec \sO_{\sS, s}$ along the natural map $\Spec \hat{\sO}_{\sS, s}\to \II_+$, denoted by $\II_{+,\hat{s}}$, is a $\sP_+$-torsor over $ \Spec  \hat{\sO}_{\sS, s}$.  Since we know already that when restricted to the generic fibre $\shk$ of $\sS$, $\II_+$ is a $\sP_+$-torsor over $\sS$ (as it is so after further base change to $\shkc$), we may assume that $s$ lies in the special fibre of $\sS$. But as stated in \cite[Lem. 2.3.2,  2)]{ChaoZhangEOStratification}, it is essentially shown in \cite{KisinIntegralModels} that $ \II_{ +,\hat{s}}$ is a trivial $ \sP_+ $-torsor over $ \widehat{\mathcal{O}}_{\mathcal{S}, s} $. 
		
	\end{proof}	

	\section{\protect{The zip period map $ \zet $ for $S$}}
	 In this section we review the zip period map $	\upzeta: S\longrightarrow \Gzips$, that Zhang constructs in \cite{ChaoZhangEOStratification}, following loc. cit. and \cite{Wortmann}. 

	\subsection{The stack of $G$-zips} \label{S: G-zips}
	Let $\upmu:\Gmka\to \Gka$ be as in \eqref{Def:mu}, and \[M, \ U_{\pm}\sbt P_{\pm},\] 		the special fibres of the algebraic groups $\sM, \sU_{\pm}\sbt \sP_{\pm}$  defined in \S \ref{S:Emdcoord}.  Recall our notational  convention in \S \ref{NotaConven}: for a subgroup $H\sbt \Gka$, we write $H^{\sig}\sbt \Gkasig\cancong \Gka$ for its base change along $\sig: \kap\to \kap$. 
	 
	\begin{defn}[{\cite[3.1]{PinkWedhornZiegler2}}] Let $ T $ be a scheme over $ \kap $. 
		\textbf{A $G$-zip of type} $\upmu$ over $T$ is a quadruple $\underline{I}= (I, I_+, I_-, \iota)$ consisting of a right $G$-torsor $I$ over $T$, a $P_+$-torsor $I_+\sbt I$, and $P_-^{\sig}$-torsor $I_-\sbt I$, and an isomorphism  of $M^{\sig}$-torsors: 
		$$\iota: I_+^{\sig}/U_{+}^{\sig}\cong I_-/U_-^{\sig}.$$
	\end{defn}
	A morphism $\underline{I} \to \underline{I}'=(I', I_+', I_-', \iota')$ of $G$-zips of type $\upmu$
	over $T$ consists of a $G$-equivariant morphism $I\to I'$ which sends $I_+$ to $I_+'$ and $I_-$ to $I_-'$,  and which is compatible with the isomorphisms $\iota $ and $\iota'$. 	
	The category of $ G $-zips over all $ \kap $-schemes form an algebraic stack over $ \kap $.

	For the cocharacter $\upmu$ there is an associated group scheme $E_{\upmu}\sbt P_+\times P_-^{\sig}$, called the \textbf{zip group} of $ \upmu $, which is given on points of a $\kap$-scheme $T$ by
	\begin{equation}\label{def of twisted Emu}
		E_{\upmu}(T)=\{(u_+m, \umin\sig(m))\ |\ m\in M(T),  u_+\in U_+(T), u_-\in U_-^{\sig}(T)\}. 
	\end{equation}
Here we use the decomposition of $\kap$-groups $P_+=U_+\rtimes M, \ P_-=U_-\rtimes M$.  Clearly we have an isomorphism of $\kap$-group schemes\[ U_+\rtimes M\ltimes U_- \cong \emu, \ \ (u_+, m, u_-)\mapsto u_+mu_-,\] where we omit describing the group law of the LHS. In particular, $\emu$ is a smooth connected linear algebraic group over $\kap$.  Consider its right action  on $G_{\kap}$ by 
	\begin{equation}\label{Grpactontorsors}
		g\cdot(p_+, p_-) = p_+^{-1}g p_-=m^{-1}u_+^{-1}g\umin \sig(m).
	\end{equation}
    With respect to this action one can form the quotient stack $[G_{\kap}/E_{\upmu}]$ over $\kap$. Here we use the right action while in  \cite{PinkWedhornZiegler1} and \cite{PinkWedhornZiegler2}, as well as in \cite{ChaoZhangEOStratification}, the left action is used, but apparently the resulting stacks $ [E_{\upmu}\backslash G_{\kap}] $ and $ [G_{\kap}/E_{\upmu}] $ are canonically isomorphic. 
	
	\begin{thm}[{\cite[ 3.11, 3.12]{PinkWedhornZiegler2}}]\label{algebraicity of zip stack} The stacks $G\textsf{-Zip}^{\upmu}$ and $[ G_{\kap}/E_{\upmu}]$ are naturally isomorphic. They are smooth algebraic stacks of dimension $0$ over $\kap$.  
	\end{thm}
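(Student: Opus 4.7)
The plan is to construct an explicit equivalence of stacks $\Phi: \zipstack \to \Gzips$ with a quasi-inverse $\Psi$, and then to deduce smoothness and dimension $0$ from the quotient-stack presentation. The key structural facts about $\emu$ that will drive the construction are: $\emu$ sits inside $\Pplu\times \Pminsig$ with two projections to $\Pplu$ and $\Pminsig$ whose post-compositions with the maps to $M^{\upsigma}$ agree (both send $(u_+m,u_-\upsigma(m))$ to $\upsigma(m)$); and $\Uplu$ and $\Uminsig$ are normal in $\emu$ with $\emu/\Uminsig\cancong \Pplu$ and $\emu/\Uplu\cancong \Pminsig$.

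To define $\Phi$, given an $\emu$-torsor $Q\to T$ with an $\emu$-equivariant map $\alpha: Q\to \Gka$ (for the action $g\cdot(p_+,p_-)=p_+^{-1}gp_-$), I would form the contracted products $I_+:=Q\wedge^{\emu}\Pplu$, $I_-:=Q\wedge^{\emu}\Pminsig$, and $I:=Q\wedge^{\emu}\Gka$, where in the last case $\emu$ acts on $\Gka$ through the embedding $\emu\to\Pplu\hra\Gka$. These are torsors under $\Pplu, \Pminsig, \Gka$ respectively. The inclusion $I_+\hra I$ comes from $\Pplu\hra\Gka$; the inclusion $I_-\hra I$ is defined by $[q,p_-]\mapsto[q,\alpha(q)p_-]$, which is well defined precisely by the $\emu$-equivariance of $\alpha$. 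The isomorphism $\iota: I_+^{\upsigma}/\Uplusig \cancong I_-/\Uminsig$ arises because both sides canonically identify with $Q\wedge^{\emu}M^{\upsigma}$ via the two compositions $\emu\to\Pplu\to M\xrightarrow{\upsigma}M^{\upsigma}$ and $\emu\to\Pminsig\to M^{\upsigma}$, which coincide.

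For the quasi-inverse $\Psi$, given a $G$-zip $(I,I_+,I_-,\iota)$ over $T$, I would define $Q\sbt I_+\times_T I_-$ as the closed subscheme where the two induced images in the $M^{\upsigma}$-torsor $I_+^{\upsigma}/\Uplusig = I_-/\Uminsig$ (identified via $\iota$) coincide. An fppf-local trivialization of $I$ presents $Q$ as a torsor under the fiber product $\Pplu\times_{M^{\upsigma}}\Pminsig$, which using the decomposition $\Uplu\rtimes M\ltimes \Uminsig \cong \emu$ is canonically identified with $\emu$. The inclusions $I_{\pm}\hra I$ provide a morphism $Q\to\Gka$, which in a local trivialization takes a pair $(t_+,t_-)$ to $t_+^{-1}t_-\in\Gka$ and is $\emu$-equivariant. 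One then verifies that $\Phi\circ\Psi$ and $\Psi\circ\Phi$ are canonically $2$-isomorphic to the respective identity functors.

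Algebraicity, smoothness, and the dimension statement all follow formally from the presentation as a quotient stack: since $\emu$ is a smooth connected linear algebraic $\kap$-group acting on the smooth $\kap$-scheme $\Gka$, the stack $[\Gka/\emu]$ is a smooth algebraic $\kap$-stack of dimension $\dim\Gka-\dim\emu$. As $\emu$ and $\Uplu\times M\times \Uminsig$ agree as schemes, $\dim\emu=\dim\Uplu+\dim M+\dim \Umin=\dim\Pplu+\dim\Umin=\dim G$, so the dimension is $0$. The main obstacle I expect is the verification that $\Psi$ is well defined and quasi-inverse to $\Phi$: one has to check that the cut-out subscheme $Q$ is an $\emu$-torsor, rather than a torsor under some larger overgroup inside $\Pplu\times \Pminsig$; that the locally defined map $Q\to\Gka$ glues; and that the Frobenius twist appearing in the definition of $\emu$ precisely matches the Frobenius twist appearing in the zip isomorphism $\iota$. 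This bookkeeping is exactly where the specific form of the zip group, and the compatibility of the two maps to $M^{\upsigma}$, plays its decisive role.
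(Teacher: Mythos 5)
This theorem is not proved in the paper; it is cited verbatim from Pink--Wedhorn--Ziegler \cite[Thms.~3.11, 3.12]{PinkWedhornZiegler2}, so there is no internal argument to compare against. Your proof correctly reconstructs the argument of that reference, and the dimension count $\dim \emu = \dim \Uplu + \dim M + \dim \Umin = \dim G$ is right. The single point where your account is imprecise---and which you yourself flag as the place where the Frobenius bookkeeping has to be checked---is the identification of $I_+^{\upsigma}/\Uplusig$ with $Q \wedge^{\emu} M^{\upsigma}$: taken literally, Frobenius base change gives $I_+^{\upsigma}/\Uplusig \cong Q^{\upsigma} \wedge^{\emu^{\upsigma}} M^{\upsigma}$, and the promotion to your stated identification uses the relative Frobenius $Q \to Q^{\upsigma}$, which is equivariant for the relative Frobenius $\emu \to \emu^{\upsigma}$ and therefore induces a morphism $Q \wedge^{\emu} M^{\upsigma} \to Q^{\upsigma} \wedge^{\emu^{\upsigma}} M^{\upsigma}$ of $M^{\upsigma}$-torsors (here $\emu$ acts on $M^{\upsigma}$ through $\emu \to M \xrightarrow{\upsigma} M^{\upsigma}$); any morphism of torsors under the same group is automatically an isomorphism. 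With that supplied, your identification $\emu \cong \Pplu \times_{M^{\upsigma}} \Pminsig$ (with $\Pplu \to M^{\upsigma}$ the composite $u_+m \mapsto \upsigma(m)$ factoring through relative Frobenius), the constructions of $\Phi$ and $\Psi$, and the formal quotient-stack arguments for algebraicity and smoothness all go through.
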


	\subsection{The universal $ G $-zip over $ S $}\label{SectionDefinitonofEO}
	In this subsection we give definitions of those torsors appearing in the universal $G$-zip constructed in \cite{ChaoZhangEOStratification} and refer to loc. cit. for more details.
	
	For the relative de Rham cohomology $\HH^1_{\dR}(\mathcal{A}/S)$, apart from the well-known Hodge filtration $\omega_{\sA/S}\sbt \HH^1_{\dR}(\mathcal{A}/S)$, there is another filtration 
	\[
	\overline{\omega}_{\sA/S}:= \textbf{R}^1\pi_*\sH^0 (\Omega^{\bullet}_{\sA/S}), 
	\]
	called the \textbf{conjugate filtration} of $\HH^1_{\dR}(\mathcal{A}/S)$, fitting into the short exact sequence
	\begin{equation}\label{ConjFil}
		0\to \overline{\omega}_{\sA/S}\to  \HH^1_{\dR}(\mathcal{A}/S)\to \pi_* \sH^1 (\Omega^{\bullet}_{\sA/S})\to 0,  
	\end{equation}
	of locally free $\sO_{S}$-modules. This short exact sequence 
	is a particular consequence of the degeneration at $\rm E_2$-page of the conjugate spectral sequence \footnote{The denegeration of the conjugate spectral sequence at $\mathrm{E}_2$-page follows from that of the Hodge-de Rham spectral sequence at $\mathrm{E}_1$-page; see for example \cite[2.3.2]{KatzNilpConn}.} 
	\[
	{}_{\rm conj}\mathrm{E}^{r,s}_2:=\textbf{R}^{r}\pi_*\sH^{s} (\Omega^{\bullet}_{\sA/S})\implies \HH^{r+s}_{\dR}(\mathcal{A}/S).
	\]
	As discussed in \cite[7.1-7.5]{Moonen&WedhornDiscreteinvariants}, Cartier isomorphisms ((7.4) in loc. cit.) induces the following direct-summand-wise isomorphism of $\sO_S$-modules,\begin{equation}\label{ZIsomDeRham}	\updelta: \omega_{\sA/\sS}^{\sig}\oplus  \big(\HH^1_{\dR}(\mathcal{A}/S)/\omega_{\sA/\sS}\big)^{\sig}\cong \big(\HH^1_{\dR}(\mathcal{A}/S)/\overline{\omega}_{\sA/S}\big)\oplus \overline{\omega}_{\sA/S}. 	\end{equation}	We call the direct-summand-wise isomorphism $\updelta$ the \textbf{zip isomorphism} associated with the universal abelian scheme $\sA$ over $S$. The tuple $ \big(\HH^1_{\dR}(\sA/S), \omega_{\sA/\sS}, \overline{\omega}_{\sA/S}, \updelta\big)$ is  an ``$F$-zip" in the terminology of \cite{Moonen&WedhornDiscreteinvariants}. We call it the \textbf{universal $F$-zip} over $S$. The universal $G$-zip over $S$, to be defined below, should be viewed as the universal $F$-zip over $S$ with a $G$-structure.	\begin{rmk}\label{Rmk:ZipIsom}	The zip isomorphism $\updelta$ above can also be constructed using crystalline Dieudonn\'e theory, without explicit reference to Cartier isomorphisms, as is done in \cite{ChaoZhangEOStratification}. 	Indeed, there are canonical isomorphism of $\sO_S$-modules	$\HH^1_{\dR}(\mathcal{A}/S)\cong \Ds(\sA[\pinf])_{S}\cong \Ds(\sA)_S$, where $\Ds(\sA)_{S}$ is the restriction on  $S_{\rm Zar}$ (namely, the Zariski site of $S$) of the Dieudonn\'e crystal $\Ds(\sA)$ associated to $\sA$ (\cite[2.5.7]{BBM}). Under this canonical isomorphism, the Hodge filtration on both sides coincide (\cite[2.5.8]{BBM}) and the conjugate filtration $\overline{\omega}_{\sA/S}$ is equal to $\Ker(V: \Ds(\sA)_{S}\to \Ds(\sA)_{S}^{\sig})$. Then one can proceed to construct $\updelta$ in the same way as described in \S \ref{S:FzipandDieudonn} below.	\end{rmk}

	 Write $\Lmd_{\kap}^*=\Lmd_{ 0}^*\oplus \Lmd_{ -1}^*$ for the weight decomposition of $ \Lmd_{\kap}^{*} $ induced by \textbf{the inverse} of $ \upmu^{\sig}$.  Due to the canonical isomorphism $\Lams_{\kap}\cancong\Lmd_{\kap}^{*,\sig}$, such a decomposition  can be described in a  different way: if $\Lams_{\kap}=\LamsO\oplus \LamsI$ is the weight decomposition of $\Lmd^*_{\kap}$ induced by $\upmu$ as in \eqref{Weighdecomp}, we have 	\begin{equation}\label{ConjDecom}	\Lams_{ 0}=\can^{-1}(\Lmd^{*,0,\sig}), \ \ \	\Lams_{ -1}=\can^{-1}(\Lmd^{*,1,\sig}).   	\end{equation}	Here $\Lmd^{*,0,\sig}:=(\Lmd^{*,0})^{\sig}$; similarly for $\Lmd^{*,1,\sig}$.	Then $P_{-}^{\sig}$ is the schematic stabilizer in $G_{\kap}$ of the filtration $ \Lmd^{*}_{0}\sbt \Lams_{\kap}$. Now we come to the definitions of the following $ \kap $-schemes,																				\begin{align}	\begin{array}{l}	\mathrm{I}:=\Isom_{\sO_S}\big([\Lambda_{\kap}^*, \ 	s_{\kap}]\otimes \sO_{S},\ \ 	[\HH^1_{\dR}(\mathcal{A}/S),\ s_{\dR}]\big),\\	\rI_+:=\Isom_{\sO_S}\big([\Lmd^*_{\kap}\spt \Lmd^{*,1},\  s_{\kap}]\otimes \sO_{S},\ \ 	[\HH^1_{\dR}(\mathcal{A}/S)\spt \omega_{\sA/S}, \  s_{\dR}]\big),\\	\mathrm{I}_- :=\Isom_{\sO_S}\big([\ \Lmd_{ 0}^*\sbt \Lambda_{\kap}^*,\ \    s_{\kap} ] \otimes \sO_{S}, \ \ [\overline{\omega}_{\sA/S}\sbt \HH^1_{\dR}(\mathcal{A}/S), \  s_{\dR}]\big).	\end{array}	\end{align}																											Clearly $\rI$ and $\rI_+$ here are special fibres of $\II$ and $\II_+$ respectively (\S \ref{Torsors}). The group $G_{\kap}$ (resp. $P_+$, resp. $\Pminsig$ ) acts on $\mathrm{I}$ (resp. $\rI_+$, resp. $\rI_-$) on the right, as in \eqref{Grpactontorsors}.
	 
      \begin{thm}[{\cite[2.4.1, 3.1.2]{ChaoZhangEOStratification}}]\label{map zeta}	\begin{enumerate}[(1).]	\item The scheme $ \mathrm{I} $ (resp. $ \iplus$, resp. $  \mathrm{I}_{-}$) is a $ G_{\kap} $-torsor (resp. $ P_{+} $-torsor, resp. $ \Pminsig $-torsor) over $ S $.	\item The direct-summand-wise isomorphism $\updelta$ in \eqref{ZIsomDeRham} induces an isomorphism	\[	\iota: \mathrm{I}_+^{\sig}/U_+^{\sig}\cong \mathrm{I}_{-}/\Uminsig.	\]	Hence, the tuple  $\underI:=(\mathrm{I}, \iplus, \Imin, \iota)$ is a $G$-zip of type $\upmu$ over $S$, inducing a morphism of algebraic stacks over $ \kap$	\[ \upzeta: S\longrightarrow  \Gzips\cong \zipstack.\]	\item 	The map $ \upzeta $ is a smooth map of $ \kap $-stacks. 		\end{enumerate}	\end{thm}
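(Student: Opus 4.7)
The plan is to address the three claims in order, with the smoothness in (3) being the substantive obstacle while (1) and (2) are comparatively routine once one unwinds crystalline Dieudonn\'e theory.

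\textbf{Torsor properties (1).} The schemes $\rI$ and $\rI_+$ are the special fibres of $\II$ and $\IIplus$, and by Lemma \ref{Torsor/IntS} these are respectively a $\sG$-torsor and a $\sP_+$-torsor over $\sS$; hence by base change $\rI$ is a $G$-torsor and $\rI_+$ a $P_+$-torsor over $S$. The genuine new input is that $\rI_-$ is a $\Pminsig$-torsor. My plan is to deduce from crystalline Dieudonn\'e theory (Remark \ref{Rmk:ZipIsom}) that the conjugate filtration $\overline{\omega}_{\sA/S}\sbt \HH^1_{\dR}(\sA/S)$ is preserved by the de Rham tensors $s_{\dR}$: under the canonical identification $\HH^1_{\dR}(\sA/S)\cong \crys(\sA)_S$ one has $\overline{\omega}_{\sA/S} = \Ker(\VV)$, and since $s_{\dR}$ extends to horizontal, Frobenius- and Verschiebung-equivariant sections of the Dieudonn\'e crystal it must preserve $\Ker(\VV)$. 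The conjugate filtration is then of type $\sig(\upmu)^{-1}$ with stabilizer $\Pminsig$, and fppf-local sections of $\rI_-$ are manufactured from those of $\rI$ by a $G_\kap$-adjustment carrying the induced filtration to the standard one $\Lams_0\sbt \Lams_\kap$.

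\textbf{The zip isomorphism (2).} Here the main task is to verify that $\updelta$ from \eqref{ZIsomDeRham} is compatible with $s_{\dR}$ on each direct summand, so that it descends to the claimed isomorphism of $M^\sig$-torsors $\iota: \rI_+^\sig/U_+^\sig\cong \rI_-/U_-^\sig$. I would invoke the crystalline construction of $\updelta$ sketched in Remark \ref{Rmk:ZipIsom}: $\updelta$ is built from the Frobenius and Verschiebung on $\crys(\sA)$, and the tensors $s_{\dR}$ being horizontal and Frobenius-equivariant translate directly into the termwise tensor-compatibility of $\updelta$. With this in hand, the tuple $\underI = (\rI, \rI_+, \rI_-, \iota)$ satisfies the axioms of a $G$-zip of type $\upmu$ and yields $\upzeta: S\to \zipstack\cong \Gzips$ via Theorem \ref{algebraicity of zip stack}.

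\textbf{Smoothness (3).} This is the substantive content. Both $S$ and $\zipstack$ are smooth over $\kap$ (the latter by Theorem \ref{algebraicity of zip stack}), so smoothness of $\upzeta$ reduces to the infinitesimal lifting criterion at every $\bar x \in S(\bar{\mathbb{F}}_p)$. My plan is to invoke Grothendieck--Messing--Faltings deformation theory, in the form used in Kisin's construction of $\sS$. After fixing a tensor-preserving trivialization of $\HH^1_{\dR}(\sA_{\bar x})$ compatible with $\mutil$, one identifies the formal completion $\widehat{\sO}_{S, \bar x}$ with the formal completion of $\sU_-$ at the identity; along this $U_-$-parametrization, the universal deformation of $\sA_{\bar x}[p^\infty]$ is classified by the family of Hodge filtrations inside $\crys(\sA_{\bar x}[p^\infty])$ that preserve all tensors. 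On the target side, the atlas $G_\kap\to \zipstack$ presents the formal neighborhood of $\upzeta(\bar x)$ as $\widehat{G_\kap}$ modulo the formal $\emu$-action of \eqref{Grpactontorsors}; unwinding the Levi decomposition $\emu\cong U_+\rtimes M\ltimes U_-^{\sig}$, the $U_-^{\sig}$-direction of this action furnishes a transverse slice of the expected dimension $\dim U_-$. The main obstacle I anticipate is the explicit matching: one must verify that the universal $G$-zip on $\widehat{\sO}_{S, \bar x}$ traces out precisely this $U_-^{\sig}$-slice under the $\emu$-action, a statement that should reduce via the embedding \eqref{keyembedding} to a direct matrix computation involving $\mutil(p)$ and the way the deforming Hodge filtration intertwines with Frobenius on the Dieudonn\'e module. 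Once this identification of the two $U_-$-parametrizations is verified, the infinitesimal lifting criterion is satisfied and $\upzeta$ is smooth.
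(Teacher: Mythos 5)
The paper does not prove Theorem~\ref{map zeta}: it is cited verbatim from \cite[2.4.1, 3.1.2]{ChaoZhangEOStratification} (and \cite{Wortmann}), and the section containing it is explicitly an expository review of Zhang's construction. There is therefore no proof in this paper for your argument to be compared against.

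That said, your sketch is broadly consistent with the strategy those references use. For (1) and (2), your plan — deduce the torsor property of $\rI$, $\rI_+$ by base change from Lemma~\ref{Torsor/IntS}, handle $\rI_-$ and the compatibility of $\updelta$ with the tensors via the crystalline Dieudonn\'e interpretation of Remark~\ref{Rmk:ZipIsom} — is the correct route. For (3), invoking Faltings--Kisin deformation theory to identify $\widehat{\sO}_{S,\bar x}$ with the formal completion of $\sU_-$ at the identity and then matching it with the $\Uminsig$-direction of the $\emu$-action on $G_\kap$ is likewise the right idea. You should be aware, though, that the step you flag as ``the main obstacle'' is indeed the entire substance of the smoothness proof, and it is not merely a block-matrix calculation: one must track how the Frobenius on the universal Dieudonn\'e module over $\widehat{\sO}_{S,\bar x}$ varies as the Hodge filtration deforms (with a chosen Frobenius lift, as in \cite[\S 1.5]{KisinIntegralModels}), and then show the induced map to $\Uminsig$ has surjective differential; a dimension count $\dim S = \dim U_-$ then closes the argument. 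Your sketch asserts the conclusion of this matching rather than supplying it, so as written it is an outline of Zhang's proof, not a complete argument. It is worth noting that the ingredients you would need — the divided Frobenius and the conjugation relation \eqref{ConjFrob} — are exactly what this paper develops for the \emph{alternative} construction of $\upeta$ in \S\ref{S:ConstructionofEta}; so in effect your proposal for proving the cited theorem would be re-deriving much of what the paper builds for its independent map.
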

	
      We shall call the tuple $\underI:=(\mathrm{I}, \iplus, \Imin, \iota)$  the \textbf{universal $G$-zip over $S$} and $ \upzeta $ the \textbf{zip period map for $S$}. As indicated at the beginning of this section, our focus  in this paper is the map $\upzeta$ itself. But for the reader's curiosity, we end this section by giving the definition of EO strata for $S_{\bar{\mathbb{F}}_p}$. 	\begin{defn}[{\cite[3.1.1]{ChaoZhangEOStratification}}]	\label{definition of EO strata} Set $k=\Fpbar$. 	For a geometric point $ w: \Spec k\to G\textsf{-Zip}^{\upmu}$, the \textbf{EO stratum} of $S_k$ associated to $ w $, denoted by $ S_k^{w} $, is defined to be the fibre of $ w $ under the zip period map $\upzeta_k: S_k\to G\textsf{-Zip}^{\upmu}_{k}$. 	\end{defn}

	Merely by definition of $\upzeta$, being a morphism of algebraic stacks, and the property of $[G_{\kap}/E_{\upmu}]$ being a $0$-dimensional stack, one learns that each $ S_{k}^{w} $ is a locally closed subscheme of $ S_{k} $. Moreover, the smoothness of $ \upzeta $ implies that each $ S_{k}^{w}  $ is automatically a smooth $ \kap $-scheme. See \cite{ChaoZhangEOStratification} for more properties of these EO strata.

	\section{\protect{Construction of $\upgamma: \iplus\to G_{\kap}/\Uminsig$}}\label{S:ConstructionofEta}

	The  main goal of this section is to construct a morphism of $ \kap $-schemes $ \upgamma: \iplus\to\gmoduminsig $ and to deduce from it a morphism of $\kap$-stacks, $ \upeta: S\to \zipstack $. The comparison of $\upeta$ with~$\upzeta$ will be given in \S \ref{S: CompaZipMaps}. Here  $\gmoduminsig$ is the quotient fpqc sheaf of $ \Gka $ by the $ \Uminsig $-action  via right multiplication. It is represented by a scheme, smooth separated of finite type over~$ \kap $, and the canonical projection $\Gka \to \Gka/\Uminsig$ is smooth; see for example~\cite[7.15,~7.17]{MilneAlgebraicGroups}.

	\subsection{Trivialized Frobenius}\label{S: TriviaFrob}
	Let $ \barR $ be a $ \kap $-algebra which Zariski locally admits a finite $p$-basis, and $\xbarflat=(\xbar,\betxbar)\in \mathrm{I}_+(\bar R)$ an $\bar R$-point of $\mathrm{I}_+$; (cf. \S \ref{Torsors}). Let $\underline{R}=(R, \sig)$ be a simple frame of $\bar R$ (which exists by Lem. \ref{Froblifts}) and $\xflat=(x, \betx)\in \II_+(R)$ a lift of $\xbarflat$; here $\xflat$ exists since $\II_+$ is smooth over $\wkap$.
	
	 By Thm. \ref{ClaBT}, the $p$-divisible group $\sA_x[p^{\infty}]$ corresponds to an object in $\mathbf{AFDM}(\underline{R}, \nabla)$, namely an admissibly filtered Dieudonn\'e module over $\underline{R}=(R, \sig)$,
	\[
	\underline{M}=(M, \FF, \VV, \nabla_M, M^1_x),\ \  \text{with}\ \ M= \Ds(\sA_x[\pinf])(R),
	\]
	where, with the simple frame $\underline{R}$ fixed, the Dieudonn\'e module $(M,\FF, \VV,\nabla_M)$ is determined by $\sA_{\bar x}[\pinf]$, and hence by $\bar x$, while the admissible filtration  $M^1_x\sbt M$ depends on the lift $x$ of $\bar x$. In particular, depending on the objects ($x$ or $\xbar$) we want to emphasize, we can write \[\FF=\FF_{x}=\FF_{\xbar}: \Msig\to M,\]  By Rem.  \ref{AbComp} that we have canonical isomorphism of filtered $R$-modules\begin{equation}\label{CanIsom}	\big( M\spt M^1_{x}\big) \cancong \big(\HH^1_{\dR}(\sA_x/R) \spt \omega_{x}\big).	\end{equation} For this reason we identify them and this identification  equips $ M $ with a set of tensors $ s_{\dR, R}\in M^{\otimes}$.
	With this identification we view $\betx$ as a trivialization of the filtered module~$\big(M\spt M^1_x\big)$, \[	\betx: \big(\Lams_R\spt \LamsI_R\big)\cong \big(M\spt M^1_x\big).	\]  Note that since $ \Lams $ is a free $ \mathbb{Z}_{(p)} $-module, we have  canonical isomorphisms~$\eps: (\Lambda_{R}^*, s_{R}) \cancong (\sig^* \Lambda_{R}^*, \ \sig^{*}s_{R})$. 	By transport of structure we obtain the \textbf{trivialized Frobenius} 	\begin{equation}	\FF_{\xflat}=\betx^{-1} \FF  \sigbetx: \Lams_R\to \Lams_R,	\end{equation}	where we set	$\sigbetx: \Lams_R \to M^{\sig}$ 	to be $\betxsig  \eps$.  For an element $h\in \sP_+(R)$,  we have	\begin{equation}\label{ConjFrob}	\FF_{\xflat\cdot h}=h^{-1}\FF_{\xflat} \sig(h),    	\end{equation}	by definition of the action of $\sP_+(R) $ on $ \II_+(R) $ (\S \ref{Torsors}). Here, $\sig(h)$ is defined as,  \begin{equation*} \sig(h):=\eps^{-1} h^{\sig} \eps;	\end{equation*}	this coincides with our notational convention \eqref{MixFrob}.  Sometimes, we simply identify $\sig(h)$ and $h^{\sig}$ by suppressing the canonical isomorphism $\eps$ above. Clearly, for an element $\xflat\in \II(R)$, we can define $\FF_{\xflat}$ in the same way.

	\subsection{Frobenius invariance of tensors}\label{S: Frobinvar}
	The setting in this subsection is the same as in the previous subsection \S \ref{S: TriviaFrob}.
	\begin{lem}\label{Lem:FrobInvariant}
	For an element $\xflat=(x, \betx)\in \II(R)$, the Frobenius 
		$\FF_{\xflat}$ defined above preserves tensors $ s_{R} $ termwise. In particular, we have 
		\[
			\FF_{\xflat} \in \sG(R[\frac{1}{p}]).
		\]
	\end{lem}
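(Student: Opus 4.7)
My plan is to reformulate the claim as a Frobenius-invariance statement for the integral de Rham tensor inside the isocrystal $M^{\otimes}[1/p]$, and then invoke Kisin's crystalline interpretation of $s_{\dR}$ combined with a rigidity argument for horizontal sections. Unwinding $\FF_{\xflat} = \betx^{-1}\FF\,\sigbetx$ and using that $\betx$ sends $s_R$ termwise to $s_{\dR,R}$, the claim $\FF_{\xflat}(s_R) = s_R$ is equivalent to the identity $\FF(\sig^{*}s_{\dR,R}) = s_{\dR,R}$ in $M^{\otimes}[1/p]$. Granting this, the ``in particular'' clause is automatic: after inverting $p$ the endomorphism $\FF_{\xflat}$ becomes an $R[1/p]$-linear automorphism of $\Lams_{R[1/p]}$ (since $\FF\VV = p = \VV\FF$), it preserves every tensor in $s_R$, and such automorphisms are by definition the $R[1/p]$-points of $\sG$.

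For the displayed identity, I would first reduce to the case where $R$ is a complete Noetherian local $W(k)$-algebra with $k$ perfect. For each maximal ideal $\mathfrak{m}$ of $R$ the completion $\widehat R_{\mathfrak{m}}$ inherits a canonical simple frame by Lem.~\ref{Froblifts}(4), and both sides of the identity are compatible with the base change $R \to \widehat R_{\mathfrak{m}}$ (by \S\ref{BCofFrame} for the left side and Prop.~\ref{dRTensorGlob} for the right). Since $M^{\otimes}$ is finite locally free over $R$ and the family of completions at all maximal ideals is jointly faithful, it suffices to treat each $\widehat R_{\mathfrak{m}}$ separately; passing to the perfection of the residue field we may assume $R = \widehat R_{\mathfrak{m}}$ with residue field $k$ perfect.

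Under this assumption, restricting $x$ along the inclusion $\Spec W(k) \hookrightarrow \Spec R$ from the closed point yields a $W(k)$-valued point $z$ of $\sS$, and the key input from Kisin (see \cite{KisinIntegralModels}, cf.~\S\ref{CrynatrofdRTensor}) is that under the canonical isomorphism $\HH^{1}_{\dR}(\sA_{z}/W(k)) \cong \HH^{1}_{\cris}(\sA_{\bar z}/W(k))$, the tensor $z^{*}s_{\dR}$ is Frobenius-invariant in the isocrystal $\HH^{1}_{\cris}(\sA_{\bar z}/W(k))^{\otimes}[1/p]$. To upgrade this from the closed $W(k)$-point to all of $R$, I would observe that $s_{\dR,R}$ is horizontal for the Gauss--Manin connection (Prop.~\ref{dRTensorGlob}) and that $\FF$ is horizontal as part of the $F$-crystal structure (\S\ref{S:BT/Rbar}), so both $\FF(\sig^{*}s_{\dR,R})$ and $s_{\dR,R}$ are horizontal sections of the locally free $R[1/p]$-module $M^{\otimes}[1/p]$; on the formally smooth base $\Spf R$ equipped with the topologically quasi-nilpotent connection, a horizontal section is rigid, being determined by its value at a single closed point.

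The main obstacle will be this final propagation step: making precise the rigidity of horizontal sections in the isocrystal $M^{\otimes}[1/p]$, so that Frobenius-invariance at the single $W(k)$-point $z$ forces it throughout $\Spf R$. Everything else is bookkeeping with the canonical identifications between de Rham, crystalline, and contravariant Dieudonn\'e cohomology already recalled in \S\ref{S:pdivisible} and \S\ref{CrynatrofdRTensor}.
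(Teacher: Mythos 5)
Your reformulation, the reduction to the completed local rings $\widehat R_{\mathfrak m}$, and the appeal to Kisin's work are all consistent with what the paper does, and the last reduction is in fact the first step of the paper's own proof of the auxiliary lemma. From there you take a genuinely different route: restrict along the closed $W(k)$-point $z:\widehat R_{\mathfrak m}\to W(k)$, cite Kisin for Frobenius-invariance of $z^*s_{\dR}$ there, and propagate to all of $\widehat R_{\mathfrak m}$ by rigidity of horizontal sections for a topologically quasi-nilpotent connection. The rigidity principle is sound, and the propagation step you flag as the ``main obstacle'' is actually the routine part (note also that the residue field $k(\mathfrak m)$ is automatically a finite field, so the detour through perfection is superfluous).

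The real gap is upstream. Restriction of $\FF(\sig_R^*s_{\dR,R})$ along $z$ gives $(z^*\FF)$ applied to $(\sig_R\circ z)^*s_{\dR}$, whereas the statement you want to cite from Kisin concerns $\FF_N$ applied to $(\sig\circ z)^*s_{\dR}$, where $\FF_N$ is the Frobenius of the Dieudonn\'e module of $\sA_{\bar z}$ over the simple frame $(W(k),\sig)$. These agree only if $z\circ\sig_R=\sig\circ z$, and there is no reason for the Frobenius lift on $\widehat R_{\mathfrak m}$ inherited from $R$ via Lem.~\ref{Froblifts}(4) to be compatible with $z$. Bridging $z^*\FF$ and $\FF_N$ requires the Gauss--Manin parallel transport together with horizontality of $\FF$ and of $s_{\dR}$; that is precisely the technical content, and it is not bookkeeping. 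The paper makes this step explicit by factoring $x$ through the complete local ring $A=\widehat{\sO}_{\sS,s}$ of $\sS$, equipping $A$ with the coordinate Frobenius lift $X_i\mapsto X_i^p$, establishing Frobenius-invariance of $s_{\dR,A}$ via comparison with the crystalline tensors on Kisin's adapted deformation ring $B$, and then controlling the discrepancy of Frobenius lifts between $A$ and $R$ with the parallel transport isomorphism $\upepsilon$. Your route could likely be repaired---for instance by first showing, again via parallel transport and horizontality of $s_{\dR}$, that the identity $\FF(\sig^*s_{\dR,R})=s_{\dR,R}$ is independent of the choice of Frobenius lift, and then choosing a lift compatible with $z$---but as written the proposal omits the step where the work actually lives.
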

	\begin{proof}
		This follows from  the next lemma  and the definition of $\FF_{\xflat}$. 
	\end{proof}

	\begin{lem}
		The Frobenius $\FF: \Msig\to M$, after inverting $ p $, sends $\sig_{R}^*s_{\dR, R}$ to $s_{\dR,R}$ termwise.
	\end{lem}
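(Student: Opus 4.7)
The plan is to establish the Frobenius invariance via a horizontality argument combined with a check at a single geometric point.

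First, I would observe that both $\FF(\sig^{*} s_{\dR, R})$ and $s_{\dR, R}$ lie in $M[1/p]^{\otimes}$ and are horizontal for the Gauss--Manin connection: $s_{\dR, R}$ by Prop.~\ref{dRTensorGlob}, and $\FF(\sig^{*} s_{\dR, R})$ because $\FF$ is induced by a morphism of Dieudonn\'e crystals and hence intertwines the connections on $M^{\sig}$ and $M$. Using the base-change compatibility recalled in \S\ref{BCofFrame} together with Lem.~\ref{Froblifts}(4), I can replace $R$ by its $\mathfrak m$-adic completion at each closed point. On such a connected formally smooth $p$-adic base, two horizontal sections of a locally free module that agree after inverting $p$ at a single geometric point must agree everywhere after inverting $p$, by the usual rigidity of horizontal sections. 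Consequently it is enough to verify the identity over one $W(k)$-point $x: \Spec W(k) \to \Spec R$ with $k$ algebraically closed.

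At such a geometric point the statement is essentially contained in Kisin's theory (cf.~\cite[1.3.6, 2.2]{KisinIntegralModels}). The Betti tensors $s_{\alpha, B}$ introduced in \eqref{BetTensor} are $\q$-rational sections of the local system $\sV_{B, \q}^{\otimes}$, and are therefore fixed by any operator that acts trivially on $\q$. Following the diamond of comparison isomorphisms---Betti$\leftrightarrow$de Rham over $\C$ (from the complex uniformization of $\shkc$) and \'etale$\leftrightarrow$crystalline via Fontaine's $B_{\cris}$-comparison applied to $\sA_{x}/W(k)$, which is Frobenius-, Galois-, and filtration-equivariant---the tensor $s_{\alpha, \dR, W(k)}$ is identified with the image of $s_{\alpha} \in V^{*,\otimes}$ inside $H^{1}_{\et}(\sA_{x, \bar K}, \qp)^{\otimes}$. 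This image sits inside the Frobenius-fixed subspace $V^{*,\otimes} \otimes 1 \subset H^{1}_{\et}(\sA_{x, \bar K}, \qp)^{\otimes} \otimes B_{\cris}$, so transporting the equality back along the comparison yields $\FF(\sig^{*} s_{\dR, W(k)}) = s_{\dR, W(k)}$ in $(M \otimes B_{\cris})^{\otimes}$, and hence in $M[1/p]^{\otimes}$ since the latter embeds into the former.

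The main obstacle is the bookkeeping across the four realizations (Betti, de Rham, \'etale, crystalline), namely verifying that the $s$-tensors match consistently in each; but this matching is precisely what Kisin sets up in the course of constructing $s_{\dR}$, so I will invoke those results rather than reproduce the diamond from scratch.
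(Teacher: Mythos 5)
Your proof is correct but follows a genuinely different route from the paper's. Both arguments reduce to the local rings $\hat{R}_{\mathfrak{m}} \cong W(k')[[X_1,\dots,X_r]]$ and ultimately rest on Kisin's construction of Frobenius-invariant crystalline tensors, but you then use rigidity of horizontal sections --- a horizontal section of a topologically quasi-nilpotent connection over such a ring is determined, even after inverting $p$, by its value at the origin --- to reduce to a single $W(k)$-point, where the equality comes from the $B_{\cris}$-comparison as in \cite[1.3.6]{KisinIntegralModels}. The paper instead factors $x$ through $A=\hat{\sO}_{\sS,s}$ at a closed point $s$ of the special fibre and pulls back the pair $(\underline{L}, s_{\cris})$, Frobenius-invariant by construction, from Kisin's adapted deformation ring, citing \cite[2.3.5]{KisinIntegralModels}. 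Your reduction to a point is slicker and avoids the deformation-ring apparatus, but one subtlety should be made explicit: the map $\iota:\hat{R}_{\mathfrak{m}}\to W(k)$ picking out the origin is \emph{not} a morphism of simple frames ($\iota\circ\sig_R$ and $\sig_{W(k)}\circ\iota$ agree only modulo $p$), so $\iota^*\FF_R$ is not literally the Frobenius of the $p$-divisible group at the $W(k)$-point. One needs the canonical parallel-transport isomorphism provided by the connection to identify the two, and horizontality of $s_{\dR}$ to see that this identification carries $\iota^*\sig_R^*s_{\dR,R}$ to $\sig_{W(k)}^*s_{\dR,W(k)}$; this is exactly the role that $\nabla_N$ and $\nabla_L$ play at the end of the paper's proof, and you should fold it into the rigidity step. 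You are also right that the real substance at the point is the matching of $s_{\dR}$ across the Betti, de Rham, \'etale and crystalline realizations, which is a theorem of Kisin's and not mere bookkeeping; the citation should be broad enough to cover the compatibility of the $B_{\cris}$-comparison with the Betti--de Rham comparison, i.e., it should include \cite[2.3.5]{KisinIntegralModels} as well.
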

	\begin{proof}
	 For any maximal idea $ \mathfrak{m} $ of $ R $,	by Lem. \ref{Froblifts}, (4) the Frobenius lift $ \sig: R\to R $ induces a simple frame ($ \hat{R}_{\mathfrak{m}}, \sig )$ of $  \hat{R}_{\mathfrak{m}} $, compatible with $ (R, \sig_R) $. Note that $\hat{R}_{\mathfrak{m}}$ is necessarily $ p $-complete ( since $  \mathfrak{m}  $ contains $ p $). Hence it suffices to show the lemma after base change to $\hat{R}_{\mathfrak{m}}$ for all $ \mathfrak{m} $. In particular, we may assume that $ R $ is a local ring. 
		
		Let $ s'\in \mathcal{S} $ be the image of the closed point of $ \Spec R $, which necessarily lies in the special fibre $ S\sbt \mathcal{S} $. Let  $s\in  S $ be a closed point which is a specialization of $ s' $. Then the morphism $ x: \Spec R\to \mathcal{S} $ facts through the canonical embedding $ s: \Spec A \to \sS$, where $A:=\hat{\sO}_{\sS, s}$ is the complete local ring of $\sS$ at $s$.  Choose a $ W(k)$-isomorphism $A\cong W(k)[[X_1,\cdots, X_r]]$ and consider the Frobenius lift $\sig_{A}: A\to A$ of $A$ given by sending each $X_i$ to its $p$-th power.   Write $\underline{N}:=(N, \FF_N, \VV_N, \nabla_{N})$ for the Dieudonn\'e module over $(A,\sig_{A})$ of $\sA_{s}[p^\infty]$. Then the induced de Rham tensor $s_{\dR, A}\in N^{\otimes}$ is horizontal. We claim that $s_{\dR, A}$ is also Frobenius invariant.  Prior to showing the claim, let us note that the claim implies our lemma. Indeed, if we let $f: A\to R$ to denote the structure morphism, then $M$ is canonically isomorphic to the pull back $f^*N=N\otimes_{A,f}R$. If we identify this canonical isomorphism, then the Frobenius $\FF_M$ is equal to 
		\[\Msig\cancong \sig_R^*f^*N\overset{\upepsilon}{\cong} f^* \sig_{A}^*N\xrightarrow{f^*\FF_N} f^*N\cancong M,	\]	
		where the isomorphism $\sig_R^*f^*N\overset{\upepsilon}{\cong} f^* \sig_{A}^*N$ is provided by the integrable connection $\nabla_{N}$ (note that $ \sig_R\circ f $ and $f\circ \sig_{A}$ become the same after modulo $p$); in fact, due to our choice of free coordinates $X_i$, it is possible to give an explicit formula for $\upepsilon$ (see, for example \cite[1.5]{KisinIntegralModels}). Then since $s_{\dR, A}$ is horizontal, one sees that $\upepsilon$ sends $\sig_{R}f^* s_{\dR, A}$ to $ f^*\sig_{A}^*s_{\dR, A}$ (this can also be seen from the explicit expression of $\upepsilon$, cf. \cite[1.5.4]{KisinIntegralModels}). Now it is clear that we are reduced to show the claim.
		
		The proof of the claim is already in the proof of \cite[Prop. 2.3.5]{KisinIntegralModels}. We now write $B$ for the adapted deformation ring $R_{G_W} $ in loc. cit. of the $ p $-divisible group $\sA_{\bar s}[p^{\infty}]$ over $ k:=k(s) $, where we use $\bar s:\Spec k\to S$ to denote the special fibre of $s$. Again the $\wkap$-algebra $B$ is isomorphic to some power series ring over $W(k)$ and one can equip it with a Frobenius lift $\sig_B: B\to B$ by sending free coordinates to their $p$-th powers. Write $\underline{L}:=(L, \FF_L, \VV_L, \nabla_L)$ for the Dieudonn\'e module over $(B, \sig_{B})$ that corresponds to the universal $p$-divisible group over $B$; the construction of $\underline{L}$ is explained in \cite[1.5.4]{KisinIntegralModels}.  By construction the Dieudonn\'e module $L$ comes with \textbf{Frobenius-invariant} tensors which we denote by $s_{\cris}$. We know from the proof of \cite[1.5.4]{KisinIntegralModels} (see also \cite[Thm. 3.3.12]{LoveringFilCryst} for more details) that there exists a $\wkap$-algebra homomorphism (in fact an isomorphism) $g:B\to A$ such that the tuple $\big(\underline{N}, s_{\dR, A}\big)$ is obtained as the pull back along $g$ of the tuple $\big(\underline{L}, s_{\cris}\big)$, expect that one has to use the integrable connection $\nabla_L$, as we did for $\nabla_{N}$,  to deal with the possible incompatibility of Frobenius lifts between $\sig_A$ and $\sig_{B}$. Again using the fact that  $s_{\cris}$ is horizontal, we conclude that $s_{\dR, A}$ is also Frobenius invariant. 
	\end{proof}

	\subsection{Trivialized partially divided Frobenius}
	Let $\Rbar, \underline{R}$, $\xbarflat$ and $\xflat$ be as in \S \ref{S: TriviaFrob}.
	For the next lemma, which plays important roles for our construction  of $ \gam: \iplus\to G_{\kap}/\Uminsig $ below,  one may recall the characters	$ \mutil $ and  $ \mutilsig=\sigmutil $ in \S \ref{S:cochar}.

	\begin{lem}\label{IntegralLem}
		For every $\xflat\in \II_+(R)$ we have $\FF_{\xflat}\in\mathcal{G}(R)\mutilsig(p)\sbt  \sG(R[\frac{1}{p}])$; 
		i.e.,  we have 
		\[
			\FF_{\xflat}=\Intxflat\mutilsig(p)
		\]
		for some (necessarily unique) element $\Intxflat\in \sG(R)$.
	\end{lem}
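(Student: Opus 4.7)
The strategy is to invoke the partially divided Frobenius from \S\ref{ParDivFrob}, trivialized via $\betx$ and $\sigbetx$.  Because $\xflat=(x,\betx)\in\II_+(R)$ respects filtrations, setting $M^0_x:=\betx(\Lmd^{*,0}_R)$ will yield a normal decomposition $M=M^1_x\oplus M^0_x$ that is a genuine (not merely local) direct sum, so Lem.~\ref{Zipisom} will apply to produce a factorization $\FF=\Gamma\circ\rf$ with $\rf=p\cdot\id_{M^{1,\sig}_x}\oplus\id_{M^{0,\sig}_x}$ and $\Gamma\colon\Msig\to M$ an \emph{isomorphism} of $R$-modules.  This $R$-linearity of $\Gamma$ is what will supply the integral factor.

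Next I will transport both $\Gamma$ and $\rf$ through $\betx$ and $\sigbetx$ into $\End_R(\Lams_R)$.  The bookkeeping point is that $\sigbetx=\betxsig\circ\eps$ carries $\Lmd^{*,1,\sig}_R$ onto $M^{1,\sig}_x$ and $\Lmd^{*,0,\sig}_R$ onto $M^{0,\sig}_x$: the canonical $\eps$ identifies $\sig^*\Lmd^{*,\bullet}_R$ with $\Lmd^{*,\bullet,\sig}_R$ (which in general differs from $\Lmd^{*,\bullet}_R$ because $\mutil$ lives only over $\wkap$, not over $\Z_p$), and then $\betxsig$ sends these onto $\sig^*\betx(\Lmd^{*,\bullet}_R)=M^{\bullet,\sig}_x$.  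Consequently $\sigbetx^{-1}\circ\rf\circ\sigbetx$ acts as multiplication by $p$ on $\Lmd^{*,1,\sig}_R$ and by $1$ on $\Lmd^{*,0,\sig}_R$ --- precisely the action of $\mutilsig(p)$ viewed, via $\iota\colon\sG\hra\GL(\Lams)$, as an $R$-linear endomorphism of $\Lams_R$.  Defining $\Intxflat:=\betx^{-1}\circ\Gamma\circ\sigbetx$ therefore gives an element of $\GL(\Lams_R)$ satisfying $\FF_{\xflat}=\Intxflat\cdot\mutilsig(p)$.

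The hard part will be the integrality: upgrading $\Intxflat\in\GL(\Lams_R)$ to $\Intxflat\in\sG(R)$.  By Lem.~\ref{Lem:FrobInvariant}, $\FF_{\xflat}\in\sG(R[\frac{1}{p}])$, and since $\mutilsig(p)\in\sG(\wkap[\frac{1}{p}])$ is invertible there, one obtains $\Intxflat=\FF_{\xflat}\cdot\mutilsig(p)^{-1}\in\sG(R[\frac{1}{p}])$.  To conclude, I will use that $R\hra R[\frac{1}{p}]$ (flatness of $R$ over $\wkap$) together with $\sG\hra\GL(\Lams)$ being a closed immersion of $\Z_{(p)}$-schemes, which together force $\sG(R)=\sG(R[\frac{1}{p}])\cap\GL(\Lams_R)$ inside $\GL(\Lams_{R[\frac{1}{p}]})$, hence $\Intxflat\in\sG(R)$; uniqueness is immediate from invertibility of $\mutilsig(p)$.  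The essential subtlety is that neither the $R$-linearity of $\Gamma$ alone (which only gives integrality in $\GL$) nor the $\sG$-invariance of $\FF_{\xflat}$ alone (which only gives $\sG$-membership after inverting $p$) suffices; the factorization $\FF=\Gamma\circ\rf$ is precisely what allows the two to be combined.
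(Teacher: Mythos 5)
Your proposal is correct and follows essentially the same approach as the paper: the same normal decomposition $M=M^1_x\oplus M^0_x$ induced by $\betx$ from the weight decomposition of $\Lams_R$, the same invocation of Lem.~\ref{Zipisom} to obtain the integral isomorphism $\Gamma$, the same identification $\sigbetx^{-1}\circ\rf\circ\sigbetx=\mutilsig(p)$, and the same combination of $\GL(\Lams_R)$-integrality with Lem.~\ref{Lem:FrobInvariant} and the intersection $\sG(R)=\sG(R[\frac{1}{p}])\cap\GL(\Lams_R)$. Your proposal is slightly more explicit about the role of $\eps$ in matching the $\sig$-twisted weight spaces, which is a useful clarification but not a different argument.
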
 
	
	\begin{proof}
		The weight decomposition 
		$\Lams_R=\Lambda_R^{*,1}\oplus \Lambda_R^{*,0}$ given by $\mutil$ (see \eqref{Weighdecomp}), induces via $\betx$ a normal decomposition $M=M^{1}\oplus M^{0}$ of $M$. 
		Then by \S \ref{ParDivFrob} we have the decomposition \[\FF_{x}= \Gamxflat \circ \rfxflat,\]
		with $\Gamxflat$ and $\rfxflat$ defined as in \eqref{defGamma}. Note that by Lem. \ref{Zipisom}, the partially divided Frobenius $\Gamxflat$ is an isomorphism of $ R $-modules.
		Now we have
		\begin{equation}\label{Eq:Fxflat}
			\FF_{\xflat}=   \betx^{-1} \FF_{\xbar} \sigbetx	= \betx^{-1}\big( \Gamxflat \rfxflat\big)\sigbetx 	=\Intxflat \big(\sigbetx^{-1} \rfxflat \sigbetx\big),\end{equation}	where $\Intxflat$ is defined as \begin{equation}	\Intxflat:=\betx^{-1} \Gamxflat \sig(\betx).\end{equation}   	Clearly we have $\Intxflat\in \GL(\Lams_R)$. Unwinding the definition of $  \mutilsig$ in \S \ref{S:cochar} we see that	\[	\sig(\betx)^{-1}\rfxflat \sig(\betx)=\mutilsig(p).	\]
		Now the equality \eqref{Eq:Fxflat} becomes  $\FF_{\xflat}=\Intxflat\mutilsig(p)$, and hence by Lem. \ref{Lem:FrobInvariant} we have
		\[
		\Intxflat\in \sG(R[\frac{1}{p}])\cap \GL(\Lams_R)=\sG(R).
		\]
		
	\end{proof}
	We will call $\Intxflat\in \sG(R)$ the \textbf{trivialized partially divided Frobenius} attached to $\xflat$, w.r.t. the simple frame $\underline{R}=(R, \sig)$.

	\subsection{Local version of $\gam$} \label{LocConstr} We continue our discussion in the setting of the previous subsection. 
	
	With the simple frame $\underline{R}$ fixed, for an element $\xflat\in \II_+(R)$, as usual we write $ \overline \Intxflat \in G(\bar R)$ for the reduction modulo $p$ of the trivialized partially divided Frobenius $ \Intxflat $. Denote by $\gamxbarflat\in G_{\kap}/\Uminsig(\barR) $ for the image of $\Intxflatbar$ along the canonical projection $ G(\barR)\to  G_{\kap}/\Uminsig(\barR)$. The notation $\gamxbarflat$ is justified by the following proposition.

	\begin{prop}\label{Locconstr}
		The element $\gamxbarflat\in G_{\kap}/\Uminsig(\barR) $ is determined by $\xbarflat$, i.e., it is independent of the choice of lifts $\xflat$ and the choice of simple frames $\underline{R}$ of $\bar R$.  In particular, it induces a morphism of $ \kap $-schemes,
		\[ \gamxbarflat: \Spec \barR\to G_{\kap}/\Uminsig. \]
	\end{prop}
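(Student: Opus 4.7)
The proposition has two independence claims to verify: (i) fixing a simple frame $\underline{R} = (R, \sig)$, the reduction $\overline{\Intxflat}$ does not depend on the lift $\xflat \in \II_+(R)$ of $\xbarflat$; and (ii) the class in $(G_\kappa/\Uminsig)(\bar R)$ does not depend on the choice of simple frame. Once (i) and (ii) are established, $\gamxbarflat \in (G_\kappa/\Uminsig)(\bar R)$ is well-defined, and this is the same data as a $\kappa$-morphism $\Spec \bar R \to G_\kappa/\Uminsig$.

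For (i), since $\II_+$ is a $\sP_+$-torsor over $\sS$ by Lemma \ref{Torsor/IntS}, two lifts of $\xbarflat$ differ by a unique $h \in \mathtt{K}_1(\sP_+)(R)$. Writing $h = u_+ m$ via the Levi decomposition $\sP_+ = \sU_+\rtimes \sM$, both factors reduce to $1$ mod $p$. Combining the transformation rule \eqref{ConjFrob} with $\FF_\xflat = \Intxflat\cdot\sigmutil(p)$, a direct rearrangement yields
\[
\smallint_{\xflat\cdot h} \;=\; h^{-1}\,\smallint_{\xflat}\,\bigl(\sigmutil(p)\,\sig(h)\,\sigmutil(p)^{-1}\bigr).
\]
Applying the $\sig$-twisted analogue of Lemma \ref{Lem:integral}, the factor $\sigmutil(p)\,\sig(u_+)\,\sigmutil(p)^{-1}$ lies in $\mathtt{K}_1(\sG)(R)$, while $\sig(m) \in \sM^\sig(R)$ commutes with $\sigmutil(p)$ and reduces to $1$ mod $p$. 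Combined with $\bar h = 1$, this forces $\overline{\smallint_{\xflat\cdot h}} = \overline{\smallint_{\xflat}}$ already in $G(\bar R)$; the $\Uminsig$-quotient plays no role in (i).

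For (ii), by Lemma \ref{Froblifts}(1) I may reduce to the case of a single ring $R$ with two Frobenius lifts $\sig_1, \sig_2$, and fix a common $\xflat \in \II_+(R)$. The contravariant Dieudonn\'e crystal evaluated at $R \twoheadrightarrow \bar R$ yields a single module $M$ equipped with two Frobenii $\FF_{\sig_i}: \sig_i^*M \to M$. Since $\sig_1 \equiv \sig_2 \pmod p$, the topologically quasi-nilpotent connection $\nabla_M$ produces a canonical Taylor-expansion intertwiner $T: \sig_2^*M \to \sig_1^*M$, preserving the de Rham tensors (\S \ref{CrynatrofdRTensor}), with $T \equiv \mathrm{id} \pmod p$ and $\FF_{\sig_2} = \FF_{\sig_1}\circ T$. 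Translating via $\beta_x$ gives an element $\tau := \sig_1(\beta_x)^{-1}\circ T\circ \sig_2(\beta_x) \in \mathtt{K}_1(\sG)(R)$, and manipulation of the identity $\FF_\xflat = \Intxflat\cdot\sigmutil(p)$ produces
\[
\smallint_{\xflat,\sig_2} \;=\; \smallint_{\xflat,\sig_1}\cdot\bigl(\sigmutil(p)\,\tau\,\sigmutil(p)^{-1}\bigr).
\]
Writing $\tau = I + pX$ and using the matrix embedding $\iota: \sG \hookrightarrow \GL_{2\rg}$ of \eqref{keyembedding}, under which $\sigmutil(p) = \mathrm{diag}(pI_\rg, I_\rg)$, a short block computation shows that $\sigmutil(p)\,\tau\,\sigmutil(p)^{-1}$ is integral and reduces modulo $p$ to a lower-unitriangular block matrix — i.e.\ by \eqref{Intersect} to an element of $\Uminsig(\bar R)$. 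Thus $\overline{\smallint_{\xflat,\sig_2}}$ and $\overline{\smallint_{\xflat,\sig_1}}$ coincide in $(G_\kappa/\Uminsig)(\bar R)$, establishing (ii).

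I expect step (ii) to be the main obstacle: the conjugation argument in (i) is nearly formal once Lemma \ref{Lem:integral} is at hand, whereas (ii) requires both the crystalline comparison of Frobenii via $\nabla_M$ and the explicit block-matrix analysis through $\iota$. It is precisely this analysis that identifies the discrepancy with $\Uminsig$ rather than some larger congruence subgroup of $G$, and thereby explains why the $\Uminsig$-quotient is the correct one in the definition of $\gam$.
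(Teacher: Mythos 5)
Your overall structure---reduce to a common underlying ring $R$ via Lem.~\ref{Froblifts}(1), then handle independence of lift and independence of Frobenius lift separately---matches the paper, and your part (ii) is essentially the paper's Lem.~\ref{Glue2}: the same crystal-comparison intertwiner $T$ (the paper's $\iota$), the same conjugation by $\sigmutil(p)$, and the same block-matrix verification that the conjugate lands in $\Uminsig(\bar R)$ modulo $p$.

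However, part (i) has a genuine gap, and it is not a minor one. You assert that ``two lifts of $\xbarflat$ differ by a unique $h \in \mathtt{K}_1(\sP_+)(R)$'' because $\II_+$ is a $\sP_+$-torsor. But $\II_+$ is a $\sP_+$-torsor \emph{over} $\sS$: two points $\xflat=(x,\betx)$, $\yflat=(y,\bety) \in \II_+(R)$ lifting the same $\xbarflat$ need not have the same underlying $\sS(R)$-point, since $\sS$ is smooth and $\bar x \in S(\bar R)$ admits many lifts $x \in \sS(R)$. When $x \neq y$, the two fibres of $\II_+ \to \sS$ are distinct torsors, and one cannot write $\yflat = \xflat\cdot h$. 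To compare them one must first invoke the crystal-comparison isomorphism $\epsilon(x,y)$ of \S\ref{CrynatrofdRTensor}: it is tensor-preserving (so the resulting transition element $h := \bety^{-1}\circ \epsilon(x,y) \circ \betx$ lies in $\mathtt{K}_1(\sG)(R)$), but it is emphatically \emph{not} filtration-preserving---$\epsilon(x,y)(\omega_x)$ and $\omega_y$ are different lifts of $\omega_{\bar x}$---so $h \notin \sP_+(R)$. Your argument then breaks down at exactly the step where you invoke $h = u_+ m$ with both factors in $\sP_+$. The correct conclusion (the paper's Lem.~\ref{Glue1}) is that $\overline{\Intxflat} = \overline{\Intyflat}\cdot\sig(u_-)$ for a nontrivial $u_- \in U_-(\bar R)$, obtained from the same block-matrix computation you use in (ii), applied now to $\mutil(p)h\mutil(p)^{-1}$ with $h \in \mathtt{K}_1(\sG)(R)$ instead of $\mathtt{K}_1(\sP_+)(R)$. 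In particular your closing remark that ``the $\Uminsig$-quotient plays no role in (i)'' is the opposite of the truth: it is precisely the freedom in choosing the lift $x$ of $\bar x$ that produces the $\Uminsig$-ambiguity, and this is why the map lands in $\Gka/\Uminsig$ rather than $\Gka$ (and, as the introduction points out, why the construction is compatible with Faltings's description of the deformation space via the formal completion of $\sU_-$). So part (i) is in fact no more trivial than part (ii); both require the same crystal-comparison input and the same matrix computation.
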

	
	\begin{proof}
		Let $ \underline{R'}=(R', \sig')$ be another simple frame of $\bar R$ and $\yflat=(y, \bety)\in \II_+(R')$ another lift of $\xbarflat$. Denote by $\smallint_{\yflat, \sig'}\in\sG(R')$ be the trivialized partially divided Frobenius attached to~$\yflat$, w.r.t. $\udl R'$. We will compare below the two elements $\Intxflatbar, \overline{\smallint_{\yflat, \sig'}}\in G(\Rbar)$. 
		
		Note first that we may assume $R=R'$. Indeed, by Lem. \ref{Froblifts} we can choose an isomorphism of $\wkap$-algebras $\epsilon: R'\cong R$ whose reduction modulo $p$ is $\id_{\Rbar}$. This isomorphism induces anther frame structure, 
		\[(R, \sig'):=\epsilon^*(R', \sig')\] on $R$. 
	    Denote by $\smallint_{\epsilon^*(\yflat), \sig'}$ the trivialized partially divided Frobenius  of $\epsilon^*(\yflat)\in \II_+(R)$ , w.r.t. simple frames $(R, \sig')$. Then clearly we have 
		$\overline{\smallint_{\yflat, \sig'}}=\overline{\smallint_{\epsilon^*(\yflat), \sig'}}$.
		
		From now on, we assume $R=R'$ and denote by $\smallint_{\yflat}\in\sG(R)$ be the trivialized partially divided Frobenius attached to~$\yflat$, w.r.t. $\udl R$. Now the proposition follows from the combination of Lem. \ref{Glue1} below which compares $\Intxflatbar $ and $ \Intyflatbar$, and Lem. \ref{Glue2} below which compares $\Intyflatbar$ and $\overline{\smallint_{\yflat, \sig'}}$. 
	\end{proof}
	
	\begin{lem}\label{Glue1}
		With the simple frame $\udl R$ fixed, if $\yflat=(y, \bet_y)\in \mathbb{I}_+(R) $ is another lift of $ \xbarflat $. Then there exists  $ \umin\in \Umin (\bar R)$, such that following holds in $ G(\bar{R}) $
		\begin{equation}
			\overline{\Intxflat}= \overline{\Intyflat}\cdot \sig(\umin).
		\end{equation}
	\end{lem}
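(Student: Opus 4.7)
The plan is to compare $\xflat$ and $\yflat$ via the canonical crystalline comparison and then isolate the $\sU_-$-part of the discrepancy by an open Bruhat decomposition. Since $x$ and $y$ both reduce to $\xbar$, the $F$-crystal structure on $\HH^1_{\dR}(\sA/\sS)$ (cf.\ \S\ref{CrynatrofdRTensor}) yields a canonical isomorphism
\[
\epsilon=\epsilon(x,y):\ \HH^1_{\dR}(\sA_x/R)\xrightarrow{\;\sim\;}\HH^1_{\dR}(\sA_y/R)
\]
that intertwines the two Frobenii, sends $s_{\dR,x}$ to $s_{\dR,y}$, and whose reduction modulo $p$ is the identity on $\HH^1_{\dR}(\sA_{\xbar}/\Rbar)$; in general $\epsilon$ does not map $\omega_x$ onto $\omega_y$. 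Transporting $\epsilon$ through the trivializations, set $h:=\bety^{-1}\circ\epsilon\circ\betx\in\GL(\Lams_R)$. Preservation of tensors by $\epsilon$ forces $h\in\sG(R)$, and $\overline{\epsilon}=\id$ together with $\overline{\betx}=\overline{\bety}=\betxbar$ forces $h\in\mathtt{K}_1(\sG)(R)$. Note that $h$ need not lie in $\sP_+(R)$, since $\epsilon$ will generally move $\omega_x$ away from $\omega_y$.

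From Frobenius-equivariance of $\epsilon$ (writing $\FF_y=\epsilon\circ\FF_x\circ(\epsilon^{\sig})^{-1}$) one extracts the conjugation relation $\FF_{\yflat}=h\cdot\FF_{\xflat}\cdot\sig(h)^{-1}$. Substituting $\FF_{\xflat}=\Intxflat\,\mutilsig(p)$ and $\FF_{\yflat}=\Intyflat\,\mutilsig(p)$, solving for $\Intyflat$, and reducing modulo $p$ (where $\overline{h}=1$ drops out) yields
\[
\Intxflatbar \;=\; \Intyflatbar \cdot \overline{\mutilsig(p)\,\sig(h)\,\mutilsig(p)^{-1}}.
\]
Since $h\in\mathtt{K}_1(\sG)(R)$ and $R$ is $p$-adically complete (Lem.~\ref{Froblifts}), the open immersion $\sU_-\times\sP_+\hookrightarrow\sG$ (the big cell around the identity) yields a unique decomposition $h=u'_-\cdot p_+$ with $u'_-\in\mathtt{K}_1(\sU_-)(R)$ and $p_+\in\mathtt{K}_1(\sP_+)(R)$.

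I would then handle the two resulting conjugates separately. For the $\sP_+$-part, decompose $p_+=u_+\,m$ with $u_+\in\mathtt{K}_1(\sU_+)(R)$ and $m\in\mathtt{K}_1(\sM)(R)$: Lem.~\ref{Lem:integral} gives $\mutilsig(p)\,\sig(u_+)\,\mutilsig(p)^{-1}\in\mathtt{K}_1(\sG)(R)$, while $\sig(m)$ commutes with $\mutilsig$ (since $m\in\sM$ centralizes $\mutil$), so $\overline{\mutilsig(p)\,\sig(p_+)\,\mutilsig(p)^{-1}}=1$ in $G(\Rbar)$. For the $\sU_-$-part, use the embedding \eqref{keyembedding} to write $u'_-=\bigl(\begin{smallmatrix}I&0\\ pB&I\end{smallmatrix}\bigr)$ for some $B\in M_{\rg}(R)$ (possible because $\overline{u'_-}=1$); a direct matrix calculation, with $\mutilsig(p)=\bigl(\begin{smallmatrix}pI&0\\ 0&I\end{smallmatrix}\bigr)$, yields
\[
\mutilsig(p)\,\sig(u'_-)\,\mutilsig(p)^{-1}\;=\;\begin{pmatrix}I&0\\ \sig_R(B)&I\end{pmatrix}\;\in\;\sU_-^{\sig}(R),
\]
whose reduction modulo $p$ equals $\sig(\umin)\in\Uminsig(\Rbar)\subset G(\Rbar)$, where $\umin\in\Umin(\Rbar)$ is the element with matrix parameter $\overline{B}\in M_{\rg}(\Rbar)$. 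Taking this $\umin$ gives the claimed identity.

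The main obstacle is justifying the integral Bruhat decomposition of $h$: one must know that $\mathtt{K}_1(\sG)(R)$ is contained in the $R$-points of the big cell $\sU_-\cdot\sP_+$. This holds because $\sU_-\cdot\sP_+\subset\sG$ is open and contains the identity, and because $R$ being $p$-adically complete implies that no proper closed subscheme of $\Spec R$ is disjoint from the special fibre $V(p)$; hence $h$, which reduces to $1$, factors through the big cell. Once this is in place, the rest is matrix arithmetic in the spirit of Lem.~\ref{Lem:integral}.
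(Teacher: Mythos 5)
Your proof is correct, and it isolates the same key relation as the paper: $\overline{\Intxflat}=\overline{\Intyflat}\cdot\overline{\mutilsig(p)\,\sig(h)\,\mutilsig(p)^{-1}}$ with $h\in\mathtt{K}_1(\sG)(R)$, after which one must show the surviving factor lies in $\Uminsig(\Rbar)$. The two proofs diverge only in that last step. The paper does it in one stroke: it embeds $\sG$ into $\GL_{2\rg}$, notes that $\bar h=1$ means $h$ has the block form $\bigl(\begin{smallmatrix}\rI+pA&pB\\ pC&\rI+pD\end{smallmatrix}\bigr)$, and the single matrix computation \eqref{MatrixCal} shows the conjugate is integral and reduces to a lower-unipotent block matrix; by \eqref{Intersect} this pins it down to $U_-(\Rbar)$, so that its Frobenius twist is $\sig(\umin)$. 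You instead first perform an integral big-cell decomposition $h=u'_-p_+$ inside $\sG(R)$ and treat the two factors separately — the $\sP_+$-piece via Lem.~\ref{Lem:integral} and the $\sU_-$-piece by the same matrix arithmetic. This works, and your justification of why $\mathtt{K}_1(\sG)(R)\subset(\sU_-\cdot\sP_+)(R)$ (openness of the big cell plus $p$-adic completeness forcing every maximal ideal to contain $p$) is sound, but it introduces a non-trivial extra step that the paper avoids entirely: the condition $h\equiv 1\pmod p$ already makes the conjugate obviously integral and its reduction obviously lower-unipotent in $\GL_{2\rg}$, without separating into Bruhat factors. You also fold the comparison isomorphism $\epsilon(x,y)$ into $h$ rather than using it to reduce to $x=y$ first, as the paper does; that is a cosmetic difference. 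Net: a correct alternative, but with avoidable overhead in the form of the integral Bruhat decomposition.
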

\begin{proof} Recall that by \S \ref{CrynatrofdRTensor} we have the canonical parallel  isomorphism \[ \epsilon(x,y):\HIdR(\sA_x/R)\cancong \Ds(\sA_{\bar x})(R)\cancong \HIdR(\sA_y/R), \]  which carries $s_{\dR, x}$ to $s_{\dR, y}$.  Hence for our purpose we may assume $x=y$; note however that this does NOT mean\footnote{One may instead write $\bet_{x}'$ for $\bet_y$ in the discussion below, but for typographical reason we choose not to do so.} that $\bet_x=\bet_y$.  Denote by $\II_x$  the  trivial $\sG$-torsor over $\Spec R$, obtained as the pullback to $\Spec R$ of $\II$ along $x:\Spec R\to \sS$ and view $\betx,\bety$  as  elements in $\II_{x}(R)$. Write $ h:=\bety^{-1}\circ \bet_{x}\in \sG(R)$. By \eqref{ConjFrob} we have 	\[ \Intxflat=h^{-1}\Intyflat\cdot \sig\big(\mutil(p)h\mutil(p)^{-1}\big).	\]
		\label{Need to add more details here} 
	   Hence it suffices to show the following 
		\begin{align}\label{InUmin}
			\tilde{\upmu}(p)h\tilde{\upmu}(p)^{-1}\in \sG(R), \ \ \ \overline{\tilde{\upmu}(p)h\tilde{\upmu}(p)^{-1}} \in U_{-}(\barR).
		\end{align}
		To show these we use the embedding $\iota: \sG_{W(\kap)}\hookrightarrow \GL_{2\rg, W(\kap)}$ introduced in \S \ref{S:Emdcoord}. Since we have   $ \mutil(p)h\mutil(p)^{-1} \in  \sG(R[\frac{1}{p}]) $, in order to show that it lies in $\sG(R)$, it suffices to show that it lies in $\GL_{2\rg, \wkap}(R)$. Moreover, by \eqref{Intersect}, in order to show $\overline{\tilde{\upmu}(p)h\tilde{\upmu}(p)^{-1}} \in U_{-}(\barR)$, we may replace $\mutil$ by the induced cocharacter $\mutil'=\iota\circ \mutil $ of $\GL_{2\rg, \wkap}$. Inside $\GL_{2\rg}(R[\frac{1}{p}])$, $\mutil'(p)$ and $h$ (note that $\bar h=1$) are represented by matrices of the following forms respectively 
		\begin{equation} \label{MatrxRep}
			\Big( \begin{array}{cc}
				p{\rm I}_{\rg} & \\
				&{\rm I}_{\rg}\end{array}\Big)  \text{\ \ \ and \ \ \ } \Big( \begin{array}{cc}
				{\rm I}_{\rg}+pA &pB \\
				pC&{\rm I}_{\rg}+pD \end{array}\Big), 
		\end{equation}
		where $ A, B, C, D $ are $ \rg$ by $ \rg $ matrices with entries in $ R $. Now the problems become trivial due to our discussion at the end of \S \ref{S:Emdcoord}:\begin{equation}\label{MatrixCal}	\Big( \begin{array}{cc}
			p{\rm I}_{\rg} & \\
			&{\rm I}_{\rg}\end{array}\Big)  \Big( \begin{array}{cc}
			{\rm I}_{\rg}+pA &pB \\
			pC&{\rm I}_{\rg}+pD \end{array}\Big) \Big(\begin{array}{cc}
			p{\rm I}_{\rg} & \\
			&{\rm I}_{\rg}\end{array}\Big)^{-1} = \Big( \begin{array}{cc}
			{\rm I}_{\rg}+pA &p^2B \\
			C&{\rm I}_{\rg}+pD \end{array}\Big). \end{equation}
	\end{proof}

	\begin{lem}\label{Glue2} 
		Fix a lift  $\xflat \in \II_+(R)$ of  $\xbarflat$ and let $ \underline{R'}=(R, \sig')$ be another simple frame of $\bar R$.
		Then there exists an element $ u_{- }\in U_{-}^{\sig} (\bar R)$, such that 
		\begin{equation}
			\overline{\Intxflat}= \overline{\smallint_{\xflat, \sig'}}\cdot u_{-}
		\end{equation}
	\end{lem}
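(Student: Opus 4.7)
The approach I would take is to exploit the classical Dwork-style comparison between the two Frobenius structures on $M$. Since $\underline M$ carries an integrable, topologically quasi-nilpotent connection $\nabla_M$ with respect to which $\FF$ is horizontal, a local-coordinate Taylor expansion produces a canonical $R$-linear isomorphism
\[
\epsilon = \epsilon_{\sig,\sig'}\colon M^\sig \xrightarrow{\sim} M^{\sig'}
\]
which intertwines the two Frobenius maps: $\FF_\sig = \FF_{\sig'}\circ \epsilon$. Because $\sig$ and $\sig'$ agree modulo $p$ as endomorphisms of $R$, every nonconstant term in the Taylor formula defining $\epsilon$ is divisible by $p$; hence $\epsilon \equiv \mathrm{id}$ modulo $p$ under the canonical identification $\bar M^\sig = \bar M^{\sig'}$ (both equal to the base change of $\bar M$ along the absolute Frobenius of $\bar R$).

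Combining this with the factorizations $\FF_\sig = \Gamma_{\xflat,\sig}\circ \rfxflat$ and $\FF_{\sig'} = \Gamma_{\xflat,\sig'}\circ \rf_{\xflat,\sig'}$, together with the invertibility of $\Gamma_{\xflat,\sig'}$ from Lem.~\ref{Zipisom}, yields
\[
\Gamma_{\xflat,\sig} \;=\; \Gamma_{\xflat,\sig'}\circ \bigl(\rf_{\xflat,\sig'}\circ \epsilon\circ \rfxflat^{-1}\bigr).
\]
Trivializing by $\betx$ then gives the identity $\Intxflat = \smallint_{\xflat,\sig'}\cdot u$ in $\sG(R[\tfrac{1}{p}])$, with
\[
u := \sig'(\betx)^{-1}\circ \bigl(\rf_{\xflat,\sig'}\circ \epsilon\circ \rfxflat^{-1}\bigr)\circ \sig(\betx).
\]
Since $\Intxflat$ and $\smallint_{\xflat,\sig'}$ both lie in $\sG(R)$ by Lem.~\ref{IntegralLem}, it follows automatically that $u \in \sG(R)$; the remaining task is to show $\bar u \in U_-^\sig(\bar R)$.

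For this final step I would argue via a matrix calculation analogous to that in Lem.~\ref{Glue1}, using the embedding $\iota\colon \sG_{W(\kap)} \hookrightarrow \GL_{2\rg, W(\kap)}$ from \S \ref{S:Emdcoord}. With respect to the decompositions $M^\sig = M^{1,\sig}\oplus M^{0,\sig}$ and $M^{\sig'} = M^{1,\sig'}\oplus M^{0,\sig'}$ induced by $\betx$, write $\epsilon$ in block form with entries $\epsilon_{ij}\colon M^{j,\sig}\to M^{i,\sig'}$. A direct computation gives
\[
\rf_{\xflat,\sig'}\circ \epsilon\circ \rfxflat^{-1} \;=\; \begin{pmatrix} \epsilon_{11} & p\,\epsilon_{10} \\ p^{-1}\epsilon_{01} & \epsilon_{00} \end{pmatrix}.
\]
The congruence $\epsilon \equiv \mathrm{id}\pmod p$ forces the off-diagonal blocks $\epsilon_{01},\epsilon_{10}$ to lie in $p\cdot\Hom$ (so that $p^{-1}\epsilon_{01}$ is integral) and the diagonal blocks $\epsilon_{11},\epsilon_{00}$ to reduce to $\mathrm{id}$ modulo $p$. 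Hence the displayed matrix reduces modulo $p$ to a lower-block unipotent matrix, and conjugation by $\overline{\sig(\betx)} = \overline{\sig'(\betx)} = \bar\betx^{\mathrm{Frob}}$ (which respects the direct sum decomposition because $\betx$ is a filtered isomorphism) preserves this shape. Thus $\bar u$ is lower-block unipotent in $\GL_{2\rg}(\bar R)$; combined with $\bar u \in G(\bar R)$, the identity $\sU_- = \sU'_- \cap \sG$ from \eqref{Intersect} (after base change and Frobenius twist) gives $\bar u \in U_-^\sig(\bar R)$.

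The main technical obstacle I anticipate is the careful construction of $\epsilon_{\sig,\sig'}$ and the verification of the congruence $\epsilon_{\sig,\sig'}\equiv \id\pmod p$; this rests on the topologically quasi-nilpotent character of $\nabla_M$ and is most cleanly handled using local \'etale coordinates on $\bar R$ (available by Exam.~\ref{ExofSimpFram}), in terms of which one can write the Taylor formula for $\epsilon_{\sig,\sig'}$ explicitly and read off the required $p$-divisibility of the off-diagonal blocks of $\epsilon$.
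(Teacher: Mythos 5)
Your proposal is correct and follows essentially the same route as the paper's own argument: both produce the canonical horizontal comparison isomorphism $\epsilon:M^{\sig}\cong M^{\sig'}$ coming from the crystal (connection) structure, both reduce to showing that $u=\smallint_{\xflat,\sig'}^{-1}\Intxflat=\mutilsig(p)\,h\,\mutilsig(p)^{-1}$ with $h:=\sig'(\betx)^{-1}\epsilon\,\sig(\betx)\equiv 1\ (\mathrm{mod}\ p)$ is integral with reduction in $U_-^{\sig}$, and both establish this via the same $2\times 2$ block-matrix conjugation computation under the embedding $\iota:\sG_{\wkap}\hookrightarrow\GL_{2\rg,\wkap}$ (twisted by $\sig$). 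The only differences are expository: you phrase the calculation in terms of $\rf_{\xflat,\sig'}\circ\epsilon\circ\rfxflat^{-1}$ rather than $\mutilsig(p)h\mutilsig(p)^{-1}$ (the two are identical after the conjugation $\sig(\betx)^{-1}\rfxflat\sig(\betx)=\mutilsig(p)$), and you spell out the Taylor-formula justification of $\epsilon\equiv\id\ (\mathrm{mod}\ p)$, which the paper leaves implicit after citing the $F$-crystal structure.
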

	\begin{proof} By basic properties of the Dieudonn\'e crystal  $\Ds(\sA_{\xbar})$ (cf. also \S \ref{CrynatrofdRTensor}), we have canonical parallel isomorphism $ \iota: \sig^*M\to \sig'^* M$, such that 	$ \FF=\FF'\circ \iota $. By direct computation one sees that 	                  \[	\Intxflat=\smallint_{\xflat, \sig'}\big( \mutilsig(p)h\mutilsig(p)^{-1}\big), 	\]	where 	$h:=\sig'(\betx)^{-1}\iota \sig(\betx)\in \sG(R)$, 	and the superscript $``\sig"$ in $ \mutilsig$ certainly refers to the Frobenius lift $\sig: W(\kap)\to W(\kap)$. 	We use again  the embedding $\iota: \sG_{W(\kap)}\hookrightarrow \GL_{2\rg, W(\kap)}$ in \S \ref{S:Emdcoord}, but in a twisted manner. To be precise, the   pull back of $\iota$ along $\sig: W(\kap)\to W(\kap)$ induces another embedding   	\[	\sig(\iota): \sG\cancong\sG^{\sig}\xrightarrow{\ \iota^{\sig}} \GL_{2\rg, W(\kap)}^{\sig} \cancong\GL_{2\rg, W(\kap)}.	\]	Exactly as in the proof of Lem. \ref{Glue1}, it suffices to show $\overline{\mutilsig(p)h\mutilsig(p)^{-1}} \in U'^{,\sig}_{-}(\barR)$, with $U'^{,\sig}_{-}\sbt \GL_{2\rg, \kap}$ the counterpart of $\Uminsig\sbt \Gka$ for the cocharacter		\[	\Gmka\cancong\Gmka^{\sig}\xrightarrow{\upmu'^{,\sig}} \GL_{2\rg, \kap}^{\sig}\cancong\GL_{2\rg, \kap} \]	Via the embedding $\sig(\iota)$,  $\mutilsig(p)$ and $h$ are represented inside $\GL_{2\rg}(R[\frac{1}{p}])$  by matrices of the same forms as in \eqref{MatrxRep} respectively, and $U'^{,\sig}_-(\bar R)$ consists of matrices of the form 	$\Big(\begin{array}{cc}	\rm{I}_{\rg}&\\		*& \rm{I}_{\rg}	\end{array}\Big)$.	Now we finish by the same calculation as in \eqref{MatrixCal}.
	\end{proof}

	\subsection{The global map $\gam: \rI_+\to \GmodUminsig$ via gluing} \label{S:Glue} 
	
	   In this subsection we apply Prop. \ref{LocConstr} to construct the global map $\gam: \rI_+\to \GmodUminsig$. For this we take a Zariski affine open covering $ \{\xbarflat_i:\Spec\bar R_i\hookrightarrow \rI_+\}$ of $\rI_+$. As each $\Rbar_i$ is smooth over $\kap$, by Exam. \ref{ExofSimpFram},  Zariski locally it admits a finite $p$-basis, and hence we can apply Prop. \ref{Locconstr} and obtain morphisms of $\kap$-schemes,
	\begin{equation}\label{setoflocal morphism}
		\upgamma_i=	\upgamma_{\bar{x}_i^{\flat}}: \Spec \bari \to G_{\kap}/\Uminsig.
	\end{equation}
	
	\begin{thm}
		\label{MainThm1}
		The maps $ \gam_{i}$ defined in (\ref{setoflocal morphism}) glue to a map of $\kap$-schemes,
		$$ \gam: {\rm I}_{+}\longrightarrow G_{\kap}/U_{-}^{\sig}. $$
	\end{thm}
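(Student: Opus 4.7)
The plan is to verify that the Zariski-local morphisms $\gam_i$ agree on pairwise overlaps $U_i\cap U_j$, where $U_\bullet = \Spec \bar R_\bullet$; once this is done, standard Zariski gluing of morphisms to the $\kap$-scheme $G_\kap/\Uminsig$ produces the desired global $\gam$.

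I would first reduce the overlap compatibility to a functoriality statement. Cover $U_i\cap U_j$ by affine opens $\Spec \bar R$ which are simultaneously basic opens of $U_i$ and $U_j$; each such $\bar R$ is smooth of finite type over $\kap$, hence admits a finite $p$-basis Zariski locally by Example \ref{ExofSimpFram}. Writing $\bar x^\flat := \bar x_i^\flat|_{\bar R} = \bar x_j^\flat|_{\bar R}$, it suffices to prove $\gam_i|_{\bar R} = \gam_{\bar x^\flat} = \gam_j|_{\bar R}$, the first and last equalities being instances of a single assertion. Namely, the key functoriality claim is: if $g\colon \bar R_i\to \bar R$ is a $\kap$-algebra map (e.g.\ arising from a basic open immersion) between smooth $\kap$-algebras both admitting finite $p$-bases Zariski-locally, then
\[
g^{*}\gam_{\bar x_i^{\flat}} = \gam_{g^{*}\bar x_i^{\flat}}\quad\text{in }\ G_{\kap}/\Uminsig(\bar R).
\]

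To establish this, I would fix a simple frame $(R_i,\sig_i)$ of $\bar R_i$ and a lift $x_i^\flat\in \II_+(R_i)$ of $\bar x_i^\flat$. By Lemma \ref{Froblifts}(3), the étale (in fact open-immersion) map $g$ lifts uniquely to a formally étale $R_i$-algebra structure on a simple frame $(R,\sig)$ of $\bar R$, compatibly with the chosen Frobenius lifts; that is, to a morphism of simple frames $(R_i,\sig_i)\to (R,\sig)$. Functoriality of the equivalence $\big(\mathbf{BT}/R_i\big)\cong \mathbf{AFDM}(\underline{R_i},\nabla)$ under base change of simple frames (diagram \eqref{Functrltyfram}) then implies that the admissibly filtered Dieudonné module attached to the pulled-back $p$-divisible group is the base change of the original one, and the normal decomposition pulls back to a normal decomposition. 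Consequently the trivialized partially divided Frobenius satisfies $\Intxflat|_{R}=\smallint_{x_i^\flat|_R,\sig}\in \sG(R)$, since both sides solve the same defining equation $\FF=\Gamma\circ \rf=\int\cdot\sigmutil(p)$ after base change. Reducing modulo $p$ and projecting to $G_\kap/\Uminsig$ gives $g^{*}\overline{\Intxflat}=\overline{\smallint_{x_i^\flat|_R,\sig}}\bmod \Uminsig$. Since Prop.\ \ref{Locconstr} guarantees that the right-hand side is the canonical value $\gam_{g^{*}\bar x_i^{\flat}}$, independently of the choice of $(R,\sig)$ and of the particular lift $x_i^\flat|_R$, the claim follows.

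The main obstacle I anticipate is tracking the compatibility of the auxiliary data (lifted simple frame, lifted point $x^\flat$, trivialization $\bet_x$, and normal decomposition of $M$) under base change along $(R_i,\sig_i)\to (R,\sig)$ — in particular, ensuring that pulling back the trivialization $\bet_{x_i}$ yields a trivialization of the filtered module attached to the pulled-back $p$-divisible group, compatible with the canonical isomorphism \eqref{CanIsom} and with the $s_{\dR}$-tensors (for which one uses Prop.\ \ref{dRTensorGlob} and the crystal-compatibility of \S \ref{CrynatrofdRTensor}). Once this bookkeeping is settled, the intrinsic nature of $\gam_{\bar x^\flat}$ from Prop.\ \ref{Locconstr} immediately forces agreement of $\gam_i$ and $\gam_j$ on $U_i\cap U_j$, and Zariski gluing of morphisms into $G_\kap/\Uminsig$ completes the proof.
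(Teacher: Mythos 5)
Your proof is correct, and it takes a genuinely (if modestly) different route from the paper's. The paper reduces the gluing to its Lemma \ref{Lem:coin}, a functoriality statement $\gam_{\xbarflat\circ\xi}=\gam_{\xbarflat}\circ\xi$ that it proves for \emph{arbitrary} morphisms $\xi$ of $\kap$-schemes with $p$-basis-admitting coordinate rings. Because a morphism $\Rbar\to\Rbar'$ need not lift to a map of simple frames in general, the paper resorts to ``passing to perfection'': it shows $W(\cdot)$ of the perfection produces compatible frames, and uses faithful flatness of $\Rbar\to\Rbar_{\perf}$ to get injectivity on $\GmodUminsig$-points and descend the identity. You instead notice that for gluing over a Zariski affine cover, the only transition maps that ever occur are open immersions $\Spec\Rbar\hookrightarrow\Spec\Rbar_i$, which are \'etale; for those, Lem.\ \ref{Froblifts}(3) already supplies a lift of simple frames $(R_i,\sig_i)\to(R,\sig)$ compatible with the Frobenius lifts, so the functoriality is immediate from the base-change compatibility in \eqref{Functrltyfram} together with the independence-of-choices in Prop.\ \ref{Locconstr} — no perfection needed. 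This is cleaner and more economical for Thm.\ \ref{MainThm1} specifically. The trade-off is that the paper's general Lem.\ \ref{Lem:coin} is re-used in the proofs of Thm.\ \ref{MainThm2} (applied to $k$-points, i.e.\ closed immersions) and Thm.\ \ref{CompThm}, where the map is not \'etale; your argument does not cover those cases, so if one adopts your route one still needs the perfection argument (or some substitute) later.
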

   	\begin{proof}
	   Since $ \rI_+ $ is quasi-projective (hence separated), the intersection of $\Spec \bar R_i$ and $\Spec \bar R_j$ is again affine. Denote it  by $ \Spec \bar R_{ij} $. We need to show $\gam_i$ and $\gam_j$ restrict to the same map on $\Spec \bar R_{ij}$, for all $ i ,j$. But since $\Rbar_{ij}$ is again a smooth $\kap$-algebra (hence Zariski locally admits finite $p$-basis), this follows from the next lemma, Lem. \ref{Lem:coin}.
    \end{proof}
	
	 \begin{lem}\label{Lem:coin}
		Given two $\kap$-algebras $ \Rbar, \Rbar'$ which Zariski locally admit finite $p$-basis,  and a morphism of $\kap$-schemes  $\xbarflat: \Spec \Rbar \to  \rI_+$, then for any morphism of $\kap$-schemes $\xi: \Spec \Rbar'\to \Rbar$, we have \[\gam_{\xbarflat\circ \xi}=\gam_{\xbarflat}\circ \xi.\]
	\end{lem}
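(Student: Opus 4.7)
The plan is to exploit the choice-independence of Prop.~\ref{Locconstr} to reduce the statement to a base change computation modulo $p$. Fix a simple frame $(R,\sig)$ of $\Rbar$ together with a lift $\xflat=(x,\betx)\in\II_+(R)$ of $\xbarflat$; then $\gam_{\xbarflat}$ is represented by $\Intxflatbar\in G(\Rbar)$ modulo $\Uminsig$. For $\Rbar'$, pick any simple frame $(R',\sig')$ via Lem.~\ref{Froblifts}, and using the formal smoothness of $R'$ over $W(\kap)$ in the $p$-adic topology, lift the composite $R\twoheadrightarrow\Rbar\xrightarrow{\xi^{\#}}\Rbar'$ to a $W(\kap)$-algebra map $\tilde\xi: R\to R'$. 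Setting $\yflat:=(x\circ\Spec\tilde\xi,\ \tilde\xi^{*}\betx)\in \II_+(R')$, we obtain a lift of $\xbarflat\circ\xi$, so by Prop.~\ref{Locconstr} the value $\gam_{\xbarflat\circ\xi}$ is represented by $\Intyflatbar\in G(\Rbar')$ modulo $\Uminsig$.

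The heart of the proof is to establish the equality
\[
\Intyflatbar=\xi^{\#}(\Intxflatbar)\qquad\text{in } G(\Rbar').
\]
Projecting to $G_{\kap}/\Uminsig(\Rbar')$ then yields $\gam_{\xbarflat\circ\xi}=\gam_{\xbarflat}\circ\xi$. We do \emph{not} impose any compatibility $\sig'\tilde\xi=\tilde\xi\sig$ at the level of $R'$; in characteristic zero these two ring maps may well differ. However, both lift the same map $\Rbar\to\Rbar'$, namely $\xi^{\#}\circ\mathrm{Frob}=\mathrm{Frob}\circ\xi^{\#}$, so their difference lies in $pR'$. This is exactly what is needed for comparisons modulo $p$: the mod-$p$ reduction of the Frobenius on the evaluated Dieudonn\'e crystal is intrinsic to the $p$-divisible group over $\Rbar'$, and does not depend on the particular Frobenius lift on $R'$. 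By functoriality of the contravariant Dieudonn\'e crystal under the morphism $\bar y=\xbar\circ\xi$, this mod-$p$ Frobenius on $\Ds(\sA_{\bar y})(\Rbar')$ is canonically identified with the $\xi^{\#}$-pullback of the mod-$p$ Frobenius on $\Ds(\sA_{\xbar})(\Rbar)$. The Hodge filtration, the normal decomposition (and hence the partial Frobenius $\Gamxflat$), and the trivialization $\bety=\tilde\xi^{*}\betx$ all base change tautologically; substituting these into the formula $\Intxflat=\betx^{-1}\Gamxflat\sig(\betx)$ of Lem.~\ref{IntegralLem} and reducing modulo $p$ gives the claimed equality.

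The main technical obstacle is the careful bookkeeping of the canonical identifications — in particular the identification of $\Ds(\sA_{\bar y})(R')$ with $\tilde\xi^{*}\Ds(\sA_{\xbar})(R)$ as $R'$-modules equipped with Hodge filtration and, modulo $p$, with Frobenius. These follow from the functorial behavior of the crystal under base change of simple frames as reviewed in \S\ref{BCofFrame}; working modulo $p$ sidesteps the necessity of inserting a Gauss--Manin parallel transport isomorphism to correct for the integral-level discrepancy $\sig'\tilde\xi\neq\tilde\xi\sig$, which is where the argument would otherwise become delicate.
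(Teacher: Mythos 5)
Your route is genuinely different from the paper's: the paper reduces the comparison to the case where $\Rbar'$ is the perfection of $\Rbar$, where the simple frame is canonically $(W(\Rbar_{\perf}),\sig)$ and hence functorial, while you lift $\xi$ to an arbitrary $W(\kap)$-algebra map $\tilde\xi:R\to R'$ (not a morphism of simple frames) and try to compare directly. That is a reasonable idea, but the central claim you single out as ``the heart of the proof,'' namely
\[
\Intyflatbar=\xi^{\#}(\Intxflatbar)\quad\text{in } G(\Rbar'),
\]
is false in general, and your reason for believing it — that working modulo $p$ sidesteps the Gauss--Manin correction $\epsilon$ for $\sig'\tilde\xi\neq\tilde\xi\sig$ — does not hold for the \emph{partially divided} Frobenius. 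The undivided part of $\Gamxflat$ (i.e.\ $\FF|_{M^{0,\sig}}$) is indeed determined mod $p$ by intrinsic mod-$p$ data. But the divided part $\tfrac1p\FF|_{M^{1,\sig}}$ is an integral operation, and altering $\FF$ by $p\cdot(\text{something})$ changes $\tfrac1p\FF|_{M^{1,\sig}}$ by something that survives reduction mod $p$. Concretely, writing $\FF'=\tilde\xi^{*}\FF\circ\epsilon$ with $\epsilon=\mathrm{I}+pA$ in block form with respect to the normal decompositions, a short $2\times2$ block computation (of exactly the kind used in Lemmas \ref{Glue1} and \ref{Glue2}) gives
\[
\overline{\Gamyflat}=\overline{\tilde\xi^{*}\Gamxflat}\cdot
\Bigl(\begin{smallmatrix}\mathrm{I}&0\\\bar A_{01}&\mathrm{I}\end{smallmatrix}\Bigr),
\]
and the unipotent factor does not vanish in general because the Gauss--Manin transport $\epsilon$ has no reason to preserve the (non-horizontal) Hodge filtration integrally. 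So the partial Frobenius does \emph{not} ``base change tautologically'' along $\tilde\xi$, and your substitution into the formula of Lem.~\ref{IntegralLem} breaks at this point.

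What is true, and what suffices, is that
\[
\Intyflatbar=\xi^{\#}(\Intxflatbar)\cdot u \quad\text{for some } u\in\Uminsig(\Rbar'),
\]
which upon projection to $(G_{\kap}/\Uminsig)(\Rbar')$ still yields $\gam_{\xbarflat\circ\xi}=\gam_{\xbarflat}\circ\xi$. To justify the $\Uminsig$-membership one conjugates the unipotent correction by $\overline{\sig'(\bety)}$ to transport it to the $\sigmu$-decomposition of $\Lams_{\Rbar'}$, and then uses $\sU'_{-}\cap\sG=\sU_{-}$ from \eqref{Intersect} together with the fact that both $\Intyflatbar$ and $\xi^{\#}(\Intxflatbar)$ already lie in $G(\Rbar')$. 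In short, your argument is correct once you replace the on-the-nose equality in $G(\Rbar')$ by equality modulo $\Uminsig$ and supply the matrix computation; at that point it is actually a more direct proof than the paper's passage to perfection, but as written it contains a gap precisely where you claim the mod-$p$ reduction makes the Gauss--Manin correction irrelevant.
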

	\begin{proof}
		This is immediate from our construction in Prop. \ref{LocConstr}, if there exists a homomorphism of simple frames $f:(R', \sig)\to (R, \sig)$ which lifts the structure map $\Rbar\to \Rbar'$. In general we do not know such an $f$ always exists; below we proceed by reducing a general case to cases where $f$ exists by ``passing to perfection". 
		
		Note first that for each $\Fp$-algebra $\bar{A}$ which Zariski locally admits a $p$-basis, the absolute Frobenius map $\sig: \bar{A}\to \bar{A}$ is faithfully flat (the $p$-basis assumption implies that $\bar{A}$ as an $\bar{A}$-module via $\sig$, is locally free). Consequently the canonical ring map  \[\Abar\to \Abar_{\perf}:=\varinjlim_{a\mapsto a^p}\bar{A}\] is faithfully flat. In particular, horizontal arrows in the following commutative diagram, 																\[																														  \xymatrix{\GmodUminsig(\Rbar)\ar[d]\ar[r]& \GmodUminsig(\Rbar_{\perf})\ar[d]\\	\GmodUminsig(\Rbar')\ar[r]&\GmodUminsig(\Rbarperf'),}
		\]  
		are injective. Now since the formation of $W(\cdot)$ is functorial, we are reduced to show $\gam_{\xbarflat\circ \pi}=\gam_{\xbarflat}\circ \pi$, with $\pi: \Spec \Rbar_{\perf}\to \Rbar$  the canonical morphism. This follows from the following fact: there is a sequence of homomorphisms of simple frames over $\wkap$, \[(R, \sig)\to (R,\sig)_{\perf}:=\big(\widehat{R_{\perf}}, \sig\big)\cong \big(W(\Rbarperf), \sig\big),\]
		which lifts the structure map $\Rbar\to \Rbarperf$ (see also \cite[Lem. 6.12]{Yan18} for another construction). We need to explain this fact:  $R_{\perf}$ is defined as the colimit perfection,\[R_{\perf}:=\varinjlim_{\sig:R\to R}R, \]
		and $\widehat{R_{\perf}}$ is the $p$-completion of $R_{\perf}$. Clearly we have $R_{\perf}/pR_{\perf}=\Rbarperf$. The Frobenius lift $\sig_R: R\to R$ induces a Frobenius lift $\sig$ on $R_{\perf}$ (hence on $\widehat{R_{\perf}}$) compatible with $\sig_R$, and hence we get the  homomorphism $(R, \sig)\to (R,\sig)_{\perf}$ of simple frames displayed above. In fact $R_{\perf}$ is $p$-torsion free and the simple frame ($R_{\perf}, \sig$), viewed as a crystalline prism (see \ref{Rmk: FramPrism}), is nothing but the perfection of the prism $(R, \sig)$ in the sense of Bhatt-Scholze \cite[Lem. 3.9]{PrismBS} and hence we justified the isomorphism of simple frames $ (R, \sig)_{\perf}\cong \big(W(\bar{R}_{\perf}), \sig\big) $; see \cite[Cor. 2.31]{PrismBS}.	
	\end{proof}

   	\subsection{The zip period map $ \upeta $}\label{S: ZipMapEta}
	The natural embedding $\Uminsig\hookrightarrow \emu$ realizes $\Uminsig$ as a normal subgroup of $\emu$. Via this embedding $\Uminsig$ acts on $\Gka$ by right multiplication. Passing to quotient, we obtain an action of $ P_{+}=\emu/\Uminsig $ on $ \FlG$ given on local sections by 
	$ g\cdot p_{+}=p_{+}^{-1}g\sig(m)$, 
	where $ p_{+}=u_{+}m $, with $ u_{+}\in U_{+} $ and $ m\in M $. Denote by $[(\FlG)/P_{+}]$ the resulting quotient algebraic stack over $\kap$. Since the action of $\Uminsig$ on $\Gka$ is free, the canonical projection $\Gka\to \FlG$ induces a canonical \textbf{isomorphism} of algebraic stacks over $\kap$,
	\[ 
	\zipstack\cong [(\FlG)/P_{+}].
	\]

	\begin{thm}\label{MainThm2}
		The map $ \upgamma $ is equivariant w.r.t. the actions of $ P_+ $ on $ \iplus $ and on $ G_{\kap}/U_{-}^{\sig} $, and hence induces a morphism of algebraic stacks over $ \kap $,
		\begin{equation*}
			\upeta: S\cong \iplus/P_{+}\to  \zipstackI \cong \zipstack\cong \Gzips. 
		\end{equation*} 
	\end{thm}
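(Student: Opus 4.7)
The plan is to verify $P_+$-equivariance of $\gam$ by a direct computation at the level of trivialized partially divided Frobenius, and then apply standard descent for quotient stacks. Since $\rI_+$ and $\FlG$ are smooth (hence reduced) $\kap$-schemes, it suffices to check equivariance functorially on $\Rbar$-points for every smooth $\kap$-algebra $\Rbar$ (which Zariski-locally admits a finite $p$-basis by Exam. \ref{ExofSimpFram}), or equivalently on the affine opens used in \S \ref{S:Glue}. Fix a simple frame $(R,\sig)$ of $\Rbar$, a lift $\xflat=(x,\betx)\in\II_+(R)$ of $\xbarflat\in\rI_+(\Rbar)$, a section $h\in P_+(\Rbar)$, and (using smoothness of $\sP_+$ over $\wkap$) a lift $\tilde h\in\sP_+(R)$ of $h$; then $\xflat\cdot\tilde h$ is a lift of $\xbarflat\cdot h$.

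The computation proceeds as follows. By the conjugation formula \eqref{ConjFrob} one has $\FF_{\xflat\cdot\tilde h}=\tilde h^{-1}\FF_{\xflat}\sig(\tilde h)$, and combining this with the decomposition $\FF_{\xflat}=\Intxflat\,\mutilsig(p)$ from Lem. \ref{IntegralLem} gives
\[
\Intxflat[{\cdot\tilde h}]=\tilde h^{-1}\Intxflat\bigl[\mutilsig(p)\sig(\tilde h)\mutilsig(p)^{-1}\bigr],
\]
where I have written the left-hand side suggestively for the trivialized partially divided Frobenius at $\xflat\cdot\tilde h$. Decompose $\tilde h=u_+m$ with $u_+\in\sU_+(R)$, $m\in\sM(R)$, so that $\sig(\tilde h)=\sig(u_+)\sig(m)\in\sU_+^{\sig}(R)\cdot\sM^{\sig}(R)$. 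Since $m$ commutes with $\mutil$, the element $\sig(m)$ commutes with $\mutilsig$, hence $\mutilsig(p)\sig(m)\mutilsig(p)^{-1}=\sig(m)$; applying (the evident analogue of) Lem. \ref{Lem:integral} to $\mutilsig$, the factor $\mutilsig(p)\sig(u_+)\mutilsig(p)^{-1}$ lies in $\mathtt K_1(\sG)(R)$, i.e.\ reduces to $1$ modulo $p$. Reducing the displayed equation modulo $p$ therefore yields
\[
\overline{\Intxflat[{\cdot\tilde h}]}=h^{-1}\,\overline{\Intxflat}\,\sig(\bar m)\quad\text{in }G(\Rbar),
\]
which is precisely $\gam(\xbarflat)\cdot h$ after projection to $G_{\kap}/\Uminsig$, by the formula $g\cdot p_+=p_+^{-1}g\sig(m)$ recalled at the start of \S\ref{S: ZipMapEta}. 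This establishes $\gam(\xbarflat\cdot h)=\gam(\xbarflat)\cdot h$.

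Once equivariance is proved, the existence and construction of $\eta$ is formal: $\gam$ descends to a morphism of quotient stacks $[\rI_+/P_+]\to[(\FlG)/P_+]$; since $\rI_+$ is a $P_+$-torsor over $S$ by the special-fibre case of Lem. \ref{Torsor/IntS}, the source identifies canonically with $S$, while the target identifies with $\zipstack\cong\Gzips$ by the isomorphism spelled out in \S\ref{S: ZipMapEta} together with Thm. \ref{algebraicity of zip stack}. The main obstacle is really bookkeeping: arranging the factors so that $\mutilsig(p)$-conjugation only acts on elements of $\sP_+^{\sig}(R)$ (where integrality and the behavior modulo $p$ are controlled by Lem. \ref{Lem:integral}), and keeping track of the Frobenius twists $(\cdot)^{\sig}$ so that the final reduction matches the stated $P_+$-action on $\FlG$ on the nose.
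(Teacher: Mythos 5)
Your proof is correct and follows essentially the same computation as the paper: both use the conjugation formula \eqref{ConjFrob}, the decomposition $\tilde h=u_+m$, the commutation of $\sig(m)$ with $\mutilsig(p)$, and Lem. \ref{Lem:integral} to kill the $\sU_+$-factor modulo $p$, then descend to the quotient stacks. The only cosmetic differences are that the paper reduces to checking on geometric $k$-points via geometric reducedness of $\rI_+\times_{\kap}P_+$ (avoiding any relative choice of simple frame) rather than on arbitrary smooth $\Rbar$-points, and rewrites $\mutilsig(p)\sig(u_+)\mutilsig(p)^{-1}$ as $\sig(\mutil(p)u_+\mutil(p)^{-1})$ so that Lem. \ref{Lem:integral} applies literally rather than via an ``evident analogue'' for the Frobenius twist.
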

	
	\begin{proof}
		We need to show the commutativity of the following diagram of $ \kap $-schemes
		\begin{equation*}
			\xymatrixcolsep{5pc}\xymatrix{	\iplus\times_{\kap}P_{+}\ar[r]^{\ \upgamma\ \times\ \id_{P_+}}\ar[d]&(G_{\kap}/\Uminsig)\times_{\kap}P_{+}\ar[d]\\
				\iplus\ar[r]^{\upgamma}&G_{\kap}/\Uminsig,}
		\end{equation*}
		where vertical arrows are given by $ P_+ $-actions. 
		Since $\rI_+\times_{\kap}P_+$ is geometrically reduced, it suffices to check the commutativity on $ k $-points for an algebraically closed field extension $k$ of $\kap$. 	Note first that for any $ \xbarflat\in \rI_+(k) $, by Lem. \ref{Lem:coin}, we have 
		$ \gam(\xbarflat)=\gam_{\xbarflat}.$
		For any $k$-point 
		$(\xbarflat, \bar{p}_+)$ of   $\rI_+\times_{\kap}P_+$, 
		take a $W(\kap)$-point ($\xflat, p_+$) of 
		$ \II_+ \times_{W(\kap)}\sP_+$, which lifts $(\xbarflat, \bar{p}_+) $. 
		Then $\xflat \cdot p_+$ is a  lift of $\xbarflat\cdot \bar{p}_+$. Applying the construction in \S \ref{LocConstr}, we obtain an element  $\smallint_{\xflat\cdot p_+}\in \sG(W(\kap)) $. 
		A direct calculation using the relation \eqref{ConjFrob} gives the following 
		\[
		\smallint_{\xflat\cdot p_+}= p_+^{-1}\smallint_{\xflat}\big(\mutilsig(p)\sig(p_+)\mutilsig(p)^{-1}\big)=p_+^{-1}\smallint_{\xflat} \sig\big(\mutil(p)\uplu\mutil(p)^{-1}\big)\sig(m), \]
		where $ p_+=u_+m$,  with $ u_+\in \sU_+(W(k))$ and $m\in \sM(W(k)) $, and
		where for the second $``="$ one uses the fact that $m$ commutes with $\mutil(p)$ and that $\mutilsig(p)=\sig(\mutil(p))$. 
		But by Lem. \ref{Lem:integral}, the element $ \mutil(p)\uplu\mutil(p)^{-1} \in \sG(W(k)[\frac{1}{p}])$  actually lies in $ \sG(W(k)) $ and we have $\overline{\mutil(p)\uplu\mutil(p)^{-1}}=1\in G(\bar R)  $. 
	\end{proof}

	\section{Comparison of $ \upeta $ with $ \upzeta $}\label{S: CompaZipMaps}
	In this section we show that the map $ \upeta : S\to \Gzips$ constructed in Thm. \ref{MainThm2} coincides with the map $ \upzeta:S\to \Gzips $ in  \cite{ChaoZhangEOStratification}, in the sense that there are naturally 2-isomorphic. The strategy is to show that there is a natural isomorphism between their corresponding objects in the groupoid $\zipstack(S)$.

	\subsection{Zip isomorphisms associated with \protect{Dieudonn\'e} modules}\label{S:FzipandDieudonn}
	As a preparation for the next subsection, as in \S \ref{S: TriviaFrob} we let $\Rbar$ be a $\kap$-algebra which Zariski locally admits a finite $p$-basis, and choose a simple frame $\underline{R}=(R,\sig)$ for $\Rbar$.   Take a point $\xbar\in S(\Rbar)$ and denote by $\underline{M}=(M, \FF, \VV, \nabla)$  the Dieudonn\'e module over $\underline{R}$ that is associated with the $p$-divisible group $\sA_{\xbar}[p^{\infty}]$. Write $\bar \FF: \bar{M}^{\sig} \to \bar M, \bar \VV: \Mbar \to \bar M^{\sig}$ for the reduction modulo $p$ of $\FF, \VV$ respectively; note however that $\Mbar, \bar{\FF}, \bar{V}$ are independent of the choice of $\underline{R}$, as they can be obtained by taking evaluation  at the trivial PD thickening $\Rbar\xrightarrow{\id}\Rbar$ of the Dieudonn\'e crystal $\Ds(\sA_{\xbar})[p^\infty]$; see \S \ref{S:BT/Rbar}.   Then the relations  $\FF\circ \VV = p\cdot \id_{M}$ and $\VV\circ \FF = p\cdot\id_{M^{\sig}}$ give rise to an exact sequence of $\Rbar$-modules
	\begin{equation*}
		\Mbarsig\xrightarrow{\FFbar} \Mbar \xrightarrow{\VVbar} \Mbarsig\xrightarrow{\FFbar} \Mbar.
	\end{equation*}
	And hence canonical isomorphisms $\FFbar: \Mbarsig/\Ker(\FFbar)\xrightarrow{\cong} \Ker(\VVbar),[\VVbar]:  \Mbar/\Ker(\VVbar)\xrightarrow{\cong}\Ker (\FFbar)$; 
	combining them, we obtain a canonical direct-summand-wise isomorphism of $\Rbar$-modules
	\begin{equation}\label{ZipIso1}
		\updelta: \Ker(\FFbar)\oplus \Mbarsig/\Ker(\FFbar)\xrightarrow{\  [\VVbar]^{-1}\oplus \FFbar \ } \Mbar/\Ker(\VVbar) \oplus \Ker(\VVbar).
	\end{equation} 
	
      We call $\updelta$ above the \textbf{zip isomorphism} associated with the Dieudonn\'e module $\Mbar$.  Now we make connection to the zip isomorphism we defined in \eqref{ZIsomDeRham} (cf. Rem. \ref{Rmk:ZipIsom}). Let $\MbarI\sbt \Mbar$ be the Hodge filtration of $\Mbar$ as introduced in \S \ref{BT/R}. As recalled in \eqref{HodgeFil}, we have $\MbarIsig=\Ker(\FFbar)\sbt \Mbarsig$.  We identify the following  canonical isomorphism, 
   	\begin{equation*}
		\big( \Mbar\spt \MbarI\big) \cancong \big(\HH^1_{\dR}(\sA_{\xbar}/\Rbar) \spt \omega_{\xbar}\big).
	\end{equation*}
     Write $\MbarO:=\ker (\VVbar)=\Im(\bar \FF)\sbt \Mbar$. Under the canonical isomorphism $\Mbar\cancong \HH^1_{\dR}(\sA_{\xbar}/\Rbar)$, $\MbarO$ corresponds to the conjugate filtration $\overline{\omega}_{\xbar}$ of $\HH^1_{\dR}(\sA_{\xbar}/\Rbar)$. We also identify the canonical isomorphism
	\begin{equation*}
		\big( \MbarO \sbt \Mbar \big) \cancong \big(  \overline{\omega}_{\xbar}\sbt \HH^1_{\dR}(\sA_{\xbar}/\Rbar)\big).
	\end{equation*}
	With these identifications, the zip isomorphism \eqref{ZipIso1} is nothing but the pull back to $\Rbar$ along $\xbar$ of the zip isomorphism  \eqref{ZIsomDeRham}. In what follows we write $\updelta$ in this form:
	\begin{equation}\label{ZipIso2}
		\updelta: \MbarIsig\oplus \Mbarsig/\MbarIsig\xrightarrow{\  [\VVbar]^{-1}\oplus \FFbar \ } \Mbar/\MbarO \oplus \MbarO.   
	\end{equation}
	
	\subsection{Comparison of $ \upeta $ and $ \upzeta $}
	Under the isomorphism $ \Gzips \cong \zipstack $, the universal $ G $-zip  in \S \ref{SectionDefinitonofEO}, $ \underI$,  corresponds to an $ \emu $-torsor $ \sZ=\sZzeta $ over $ S $, together with an $ \emu $-equivariant map $ \tilde{\upzeta}: \sZ\to \Gka $. The $ \emu  $-torsor $ \sZ $ is given by the pull-back of the canonical projection $\rI_-\to \rI_{-}/\Uminsig$ of $ S $-morphism along the $S$-morphism  																						
	\[
	\iplus\xrightarrow{\ \sig\ } \Iplusig \xrightarrow{\ \ \ \ } \Iplusig/\Uplusig\xrightarrow{\ \iota\ } \Imin/\Uminsig,
	\]
	The map $ \zetatil: \sZ\to \Gka$ is given by sending a local section $ (\xbarflat, \xbarleftflat)$ of  $\sZ\sbt \iplus\times_S \Imin$ to 
	\[
	\zetatil(\xbarflat, \xbarleftflat):= \betxbar^{-1}\circ\thetxbar,
	\] 
	which is a local section of $ G\sbt  \GL(\Lmd^*)$. Here $\xbarleftflat=(\xbar, \teta_{\xbar})$ is a local section of $ \rI_{-} $, with the same underling point $\xbar$ as that of $\xbarflat$. 
	On the other hand, under the isomorphism $[(\FlG)/P_+]\cong \zipstack$, the $P_+$-equivariant map corresponds to an $\emu$-torsor $\sZeta $ over $S$, given by the pull back of $\gam: \rI_+\to \FlG$ along the canonical projection $\Gka \to \FlG$, together with an $\emu$-equivariant map
	$\etatil: \sZeta\to \Gka$ given by the canonical projection from $\sZeta$ to $\Gka$.  
	The right $\emu$-action on $\sZeta$ is given by 
	\[
	(\xbarflat, g)\cdot (\pplu, \pmin)= (\xbarflat\cdot \pplu,\  \pplu^{-1}g \pmin).
	\]

	\begin{thm} \label{CompThm}There is a natural isomorphism $\sZ\cong \sZeta$ of  $\emu$-torsors over $S$. In other words, the two morphisms of $\kap$-algebraic stacks	$ \zet$ and $\upeta$ are $2$-isomorphic. 
	\end{thm}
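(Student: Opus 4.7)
The plan is to construct a natural morphism of $S$-schemes $\Phi: \sZ \to \sZeta$ by sending a local section $\bigl((\xbar, \betxbar),(\xbar, \thetxbar)\bigr)$ of $\sZ$ to $\bigl(\xbarflat,\, \zetatil(\xbarflat, \xbarleftflat)\bigr) = (\xbarflat,\, \betxbar^{-1}\thetxbar)$. Since $\zetatil$ is already a scheme morphism, the only non-formal point is to verify that $\betxbar^{-1}\thetxbar$ has image $\gam(\xbarflat)$ in $\FlG$; the $\emu$-equivariance then follows at once from
\[
\Phi\bigl((\xbarflat,\xbarleftflat)\cdot(\pplu,\pminsig)\bigr) = (\xbarflat\pplu,\, \pplu^{-1}\betxbar^{-1}\thetxbar\pminsig) = \Phi(\xbarflat,\xbarleftflat)\cdot(\pplu,\pminsig),
\]
and, both $\sZ$ and $\sZeta$ being $\emu$-torsors over $S$, such a $\Phi$ is automatically an isomorphism, which would give the claimed $2$-isomorphism $\upeta\cong\zet$.

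Everything therefore reduces to the identity $\betxbar^{-1}\thetxbar \equiv \Intxflatbar\pmod{\Uminsig}$. By an affine Zariski cover of $\iplus$ as in Thm.~\ref{MainThm1}, I may assume $\xbar: \Spec\Rbar\to S$ with $\Rbar$ smooth over $\kap$ admitting a simple frame $(R,\sig)$ and a lift $\xflat\in\II_+(R)$ of $\xbarflat$; via \eqref{CanIsom} the trivialization $\betx$ induces a normal decomposition $M = M^1\oplus M^0$, and $\Intxflatbar = \betxbar^{-1}\bar\Gamma_{\xflat}\,\sig(\betxbar)$ by construction. The target identity thus becomes $\bar\Gamma_{\xflat}\,\sig(\betxbar) \equiv \thetxbar \pmod{\Uminsig}$, and a class modulo $\Uminsig$ is recorded by the two pieces (i)~the restriction to $\Lmd^{*}_{0}\sbt \Lams_{\kap}$ and (ii)~the induced map modulo $\MbarO$.

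The core calculation compares these two invariants on both sides. For $\thetxbar$, the construction of $\iota$ in Thm.~\ref{map zeta} together with the explicit form of $\updelta$ in \eqref{ZipIso2} shows that (i)~equals $\FFbar\circ \sig(\betxbar)|_{\Lmd^{*}_{0}}$ and (ii)~equals $[\VVbar]^{-1}\circ \sig(\betxbar)|_{\Lmd^{*}_{-1}}$, where $\Lmd^{*}_{0}\sbt \Lams_{\kap}$ maps isomorphically onto $\Lmd^{*,0,\sig}\sbt \Lams_{\kap}^{\sig}$ under $\can$ (similarly for weight $-1$) as in \eqref{ConjDecom}. For $\bar\Gamma_{\xflat}\,\sig(\betxbar)$, the defining formula $\Gamxflat = \tfrac{1}{p}\FF|_{M^{1,\sig}}\oplus \FF|_{M^{0,\sig}}$ of \eqref{defGamma} gives $\bar\Gamma_{\xflat}|_{\bar{M}^{0,\sig}} = \FFbar$ (landing in $\MbarO$ by \eqref{HodgeFil}), while $\bar\Gamma_{\xflat}|_{\MbarIsig}$ has image complementary to $\MbarO$ by Lem.~\ref{Zipisom} and satisfies $\VVbar\circ\bar\Gamma_{\xflat}|_{\MbarIsig} = \id_{\MbarIsig}$ (from $\VV\FF = p$), hence represents the canonical inverse $[\VVbar]^{-1}: \MbarIsig\cancong \Mbar/\MbarO$. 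Plugging this into $\bar\Gamma_{\xflat}\,\sig(\betxbar)$ matches (i) and (ii) for the two sides, yielding the required congruence.

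The main obstacle I anticipate is bookkeeping: one must keep the canonical isomorphisms $\can: \Lams_{\kap}\cancong \Lams_{\kap}^{\sig}$, the passage from $\betxbar$ to $\sig(\betxbar)$, and the identifications \eqref{CanIsom} and \eqref{ConjDecom} straight, so that the two recipes for $\thetxbar\bmod\Uminsig$ really do produce the same element termwise. The only piece of genuine geometric content, beyond this bookkeeping, is the observation that the $p^{-1}$ built into $\Gamma|_{M^{1,\sig}}$ becomes $[\VVbar]^{-1}$ modulo $p$ by virtue of $\VV\FF = p$---this is precisely what makes the Dieudonn\'e-theoretic incarnation $\bar\Gamma_{\xflat}$ of the zip isomorphism agree with the one built from $\updelta$, and ultimately what makes $\upeta$ coincide with $\zet$.
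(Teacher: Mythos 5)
Your proof is correct and follows essentially the same strategy as the paper's: both reduce the theorem to showing that $\betxbar^{-1}\thetxbar$ and $\Intxflatbar$ agree in $\GmodUminsig(\Rbar)$, and both verify this by comparing $\Gamxflatbar\,\sig(\betxbar)$ against the zip isomorphism $\updelta$, with the key observation that $\VV\FF = p$ turns the formal $p^{-1}$ in $\Gamxflat$ into $[\VVbar]^{-1}$. The paper packages the verification slightly differently---introducing $\thetxbar' := \Gamxflatbar\,\sig(\betxbar)$, proving $(\betxbar, \thetxbar')\in\sZ(\Rbar)$ (first that $\thetxbar'\in\rI_-(\Rbar)$, then that its class matches $\iota(\betxbar)$ after reducing to $\GLlamstar$ to make $\iota$ explicit), and then invoking the torsor structure of $\sZ$ over $\iplus$---but the underlying calculation is identical to your direct comparison of the two $\Uminsig$-invariants.
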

	
	\begin{proof}
		
		Note that it is enough to show the commutativity of the  following diagram
		\begin{equation*}
			\xymatrixcolsep{5pc}\xymatrix{\sZ\ar[r]^{\ \zetatil \ }\ar[d]_{\pr_1}&\Gka\ar[d]\\
				\rI_+\ar[r]^{\gam}&\FlG.}
		\end{equation*}
	This is	because, once it is shown, one sees readily that the induced morphism $\sZ \to \sZeta$ of $S$-schemes, given on local sections by
		\[
		(\xbarflat, \ \xbarleftflat)\longmapsto (\xbarflat, \ \betxbar^{-1}\circ \thetxbar),
		\]
		is $\emu$-equivariant, and hence is a morphism between $\emu$-torsors over $S$, and hence is automatically an isomorphism. 

		Clearly the problem is local on $\sZ$. Let $\bar z: \Spec \bar R\to \sZ$ be an affine open of $\sZ$. In the discussion below, the underlying point $\bar x\in S(\bar R)$ is fixed. Hence to ease notation, we may write $\betxbar \in \rI_+(\bar R)$ instead of $(\xbar, \betxbar)\in \rI_+(\bar R)$; similarly for points in $\rI_-(\bar R)$. 
		We need to show the image of $\zetatil(\betxbar, \thetxbar)= \betxbar^{-1}\circ \thetxbar\in G(\bar R)$ in $\FlG(\bar R)$ coincides with $\gam(\betxbar)$.  Again since $\sZ$ is a smooth $\kap$-scheme, by Exam. \ref{ExofSimpFram} Zariski locally $\Rbar$ admits a finite $p$-basis, and hence we may choose a simple frame $(R, \sig)$ for $\Rbar$ and a lift $\xflat\in \II_+(R)$ for $\xbarflat$, we can apply the discussion in \S \ref{S:ConstructionofEta}. Then by Lem. \ref{Lem:coin}, $\gam(\betxbar)$ is equal to the image in $\FlG(\bar R)$ of 
		\[
		\smallint_{\xflat}=\betx^{-1} \Gamxflat \sig(\betx)\in \sG( R).
		\]
		Set
		\[  \teta_{x}':=\Gamxflat \sig(\betx): \big(\Lams_{ R}, \ s_{R}\big)\to  \big( M, \ s_{\dR, R}\big), \ \ \ 	\thetxbar':=\overline{\teta_{x}'}= \Gamxflatbar \sig(\betxbar).\]

		\begin{lem} \label{Lem:matching}
			We have $ (\betxbar,\  \thetxbar') \in \sZ (\Rbar)$. 
		\end{lem}
		Before showing Lem. \ref{Lem:matching}, let us note the following: it implies  Thm. \ref{CompThm}. Indeed, if Lem. \ref{Lem:matching} is shown, then
		by definition of a $G$-zip, $\thetxbar'$ and $\thetxbar$ have the same image in $\Imin/\Uminsig(\bar R)$, as they both corresponds to the image of $\betxbar$ under the isomorphism																 $\iota: \Iplusig/\Uplusig(\Rbar)\cong\Imin/\Uminsig(\Rbar)$. 						Hence we have $\thetxbar'=\thetxbar\cdot \umin$ 	for some $\umin\in \Uminsig(\bar R)$, and hence the following equality holds	\[	\overline{\smallint_{\xflat}}=\zetatil(\betxbar, \thetxbar')=\zetatil(\betxbar, \thetxbar)\umin\in G(\Rbar), \]	which  implies that the image of $\zetatil(\betxbar, \teta_{\xbar})$ in $\GmodUminsig(\Rbar)$ is equal to $\gam(\betxbar)$, as desired.
		
		\noindent
		\textbf{Proof of Lem. \ref{Lem:matching}:}
		We first show $\thetxbar'\in \Imin(\bar R)$. Note that by our discussion in  \S \ref{S:FzipandDieudonn}, the subset $\Imin(\bar R)\sbt \rI(\bar R)$ consists of elements $\thetxbar\in \rI(\bar R)$ which carries the direct summand $\Lmd^{*}_{0, \Rbar}$ of $\Lams_{\Rbar}$ isomorphically onto the conjugate filtration $\Mbar_0$ of $\bar M$.  
		
	Using notations in Lem. \ref{IntegralLem}, the normal decomposition $M=M^1\oplus M^0$ induces a decomposition $\Mbar= \Mbar^{1} \oplus \overline{M^0}$ of $\bar M$, and hence a decomposition $ \Mbarsig= \Mbar^{1,\sig} \oplus \sig^*\big(\overline{M^0}\big) $ of $ \Mbarsig $. With this decomposition, we have 	$\Mbar_0=\bar \FF\big(\sig^* \big(\overline{M^0}\big)\big)$.	From this equality we see that the direct summand of $ M $, \[M_0:=\teta_{x}'\big(\LamsO\big) =\Gamxflat\big(M^{0,\sig}\big)=\FF\big(M^{0,\sig}\big),\] is a lift of the conjugate filtration $\Mbar_{0}$ of $\bar M$ and we have  	$\thetxbar'\big(\Lams_{0,\bar R}\big)= \FF\big(\sig^*\big(\overline{ M^0}\big)\big)$.  In other words, $\thetxbar'\in \Imin(\bar R)$.  
		
		To finish the proof, we still need to show that the image of                          		$ \betxbar $ in $ \Iplusig/\Uplusig(\Rbar) $ coincides with the image of  $ \thetxbar'$ in $ \Imin/\Uminsig(\Rbar) $, via the isomorphism $ \iota:  \Iplusig/\Uplusig(\Rbar) \cong \Imin/\Uminsig(\Rbar)$. 	Denote by $ \upmu': \Gmka\xrightarrow \upmu  \Gka\hookrightarrow \GL (\Lambda_{\kap}^{*}) $ the cocharacter of   $ \GL (\Lambda_{\kap}^{*}) $ induced by $ \upmu $, as in \S \ref{S:Emdcoord}. Then we can form the $ \kap $-stack $ \GLlamstar \text{-}{\rm Zip}^{\upmu'}$. By forgetting tensors everywhere in $ \underI $, we obtain a $ \GLlamstar $-zip $  \underline{\rm I'}=(\mathrm{I'},\mathrm{I'}_{+} , \mathrm{I'}_{-}, \iota')$. 	Then by functoriality of the formation of $ G $-zips, we have the following commutative diagram 	\begin{equation*}	\xymatrix{\Iplusig/\Uplusig(\Rbar)\ar[d]\ar[r]^{\iota}&\Imin/\Uminsig(\Rbar)\ar[d]\\		\mathrm{I'}_{+}^{,\sig}/ {U'}_{+}^{, \sig}(\Rbar)\ar[r]^{\iota'}&\mathrm{I'}_{-}/{U'}_{-}^{, \sig}(\Rbar),}	\end{equation*}	where the vertical arrows are injective: this can be seen by working fppf locally and  using the fact  (see \eqref{Intersect}) \[ \mathrm{Cent}_{\GL (\Lambda_{\kap}^{*})}(\upmu')\cap \Gka= \mathrm{Cent}_{\Gka}(\upmu).\] 	Hence we are reduced to show that the image of $ \betxbar $ and $ \thetxbar' $ matches via $ \iota'$; that is, we are reduced to the case $ \Gka = \GLlamstar$.   

		Let us now unwind the definition of $\iota$ for $ \Gka = \GLlamstar$. In this special case the set $\rI_+^\sig/\Uplusig(\Rbar)$ can be realized as the set of  equivalence classes in $\rI_+(\Rbar)$ with equivalence relations given by declaring  $\bet_1, \bet_2\in \rI_+^{\sig}(\Rbar)$ equivalent if 	\[	\gr(\bet_1)=\gr(\bet_2): (\Lmd^{*}_{\Rbar} /\Lambda_{\Rbar}^{*, 1})^{\sig}\oplus 	\Lmd^{*, 1, \sig}_{\Rbar}\cong (\Mbar/\Mbar^{1}) ^{\sig}\oplus \Mbar^{1, \sig}. \]	  Similarly, the set $\rI_-/\Uminsig(\Rbar)$ can be realized as the set of equivalence classes in $\rI_-(\Rbar)$ with equivalence relations given by declaring $\thet_1, \thet_2\in \rI_-(\Rbar)$ equivalent if 
		\[
		\gr(\thet_1)=\gr(\thet_2): \Lams_{0,\Rbar}\oplus \Lams_{\Rbar} /\Lams_{0,\Rbar}\cong \Mbar_{0}\oplus \Mbar/\Mbar_{0}.
		\]
		The map $ \iota $ is given by sending the equivalence class of $ \bet \in  \mathrm{I}_{+}^{\sig}(\Rbar) $ to the unique equivalence class of $\thet\in \rI_-(\Rbar)$ with $\gr(\thet)$  equal to composition of 
		\[
		\Lams_{0,\Rbar}\oplus \Lams_{\Rbar} /\Lams_{0,\Rbar}\cong \Lams_{\Rbar}/\Lams_{-1, \Rbar}\oplus \Lams_{-1, \Rbar}\cancong (\Lmd^{*}_{\Rbar} /\Lmd^{*, 1}_{\Rbar})^{\sig}\oplus 	\Lmd^{*, 1, \sig}_{\Rbar}\xrightarrow{\bet^\sig} (\Mbar/\Mbar^{1})^ {\sig}\oplus \Mbar^{1, \sig}
		\]	
		with the zip isomorphism $(\Mbar/\Mbar^{1})^ {\sig}\oplus \Mbar^{1, \sig} \xrightarrow {\updelta} \Mbar_{0}\oplus \Mbar/\Mbar_{0}$  defined  in \eqref{ZipIso2}. Here the isomorphism $\cancong$ is induced by \eqref{ConjDecom}. Up to all kinds of identifications described above,  $\iota$ is simply given by \[\gr(\teta)\mapsto \updelta\circ \gr({\sig}(\teta)).\] Now we are reduced to verify the equality
		$\updelta=\gr\big(\overline{\Gamxflat}\big)$, which amounts to verifying the commutativity of the diagrams below, with vertical arrows canonical projections,
		\begin{equation}\label{LastDiagrm}
			\xymatrixcolsep{5pc}\xymatrix{M^{1,\sig}\oplus M^{0,\sig}\ar[d]\ar[r]^{\Gamxflat}& M_{-1}\oplus M_0\ar[d]\\
				\Mbar^{1,\sig}\oplus (\Mbar/\Mbar^1)^{\sig}\ar[r]^{\updelta}& \Mbar/\Mbar_0\oplus \Mbar_0,} 
		\end{equation}
	where $M_{-1}:=\Gamxflat(M^{1,\sig})$. 
		For the commutativity of \eqref{LastDiagrm}, we only need to check that for every element $m\in M^{1,\sig}$, we have $[\bar\VV]^{-1}(\bar m)=\overline{\Gamxflat(m)}$. 
		Note that the image $[\bar\VV]^{-1}(\bar m)$ is the unique element $\bar n\in \Mbar/\Mbar_0$ such that $\bar \VV(\bar n)=\bar m$. But $\bar \VV\big(\overline{\Gamxflat(m)}\big)=\overline{(V\circ \Gamxflat)(m)}=\bar m.$
		This finishes the proof of Lem. \ref{Lem:matching}, and hence that of Thm. \ref{CompThm}.
	\end{proof}

\end{document}